\documentclass[12pt]{amsart}
\usepackage[T1]{fontenc}
\usepackage[utf8]{inputenc}

\usepackage{enumitem} 
\usepackage{amsfonts}
\usepackage{mathrsfs}
\usepackage{graphicx}
\usepackage{amssymb, comment}
\usepackage[all]{xy}
\usepackage{tikz,everypage}

\newtheorem{thm}{Theorem}[section]
\newtheorem{lemma}[thm]{Lemma}

\theoremstyle{definition}
\newtheorem{defn}[thm]{Definition}

\theoremstyle{remark}
\newtheorem{remark}[thm]{Remark}

\theoremstyle{remark}

\title[the same boundary at infinity]{One boundary,  infinitely  geometries}

\author{Jiming Ma}
\address{School of Mathematical Sciences, Fudan University, Shanghai, 200433, China}
\email{majiming@fudan.edu.cn}

\subjclass[2010]{20F67, 20E07, 57M60, 57Q15}

\date{Oct. 11, 2026}

\thanks{The author was partially supported by   NSFC 12671086.}

\keywords{Topologically negatively curved  manifolds,  hyperbolic groups, quasi-isometry,  Coxeter groups}

\begin{document}

\maketitle

\begin{abstract}
	
	Let $N$ be a 3-dimensional integral homology sphere. We prove the existence of a sequence $\{M_k \}^{\infty}_{k=1}$ of 4-dimensional topologically negatively curved manifolds satisfying the following:
	\begin{itemize}
		\item All aspherical manifolds $M_k$ have the same universal cover $U_N$ depending only on $N$;
		\item All fundamental groups $\{ \pi_{1}(M_k) \}^{\infty}_{k=1}$ are hyperbolic and share the same Gromov boundary $X_N$  depending only on $N$;
		\item The groups $\{ \pi_{1}(M_k) \}^{\infty}_{k=1}$ represent infinitely many distinct quasi-isometry classes.
	\end{itemize}
The last conclusion, which represents the new feature, shows that negatively curved geometries in dimension 4 exhibit substantially richer structure than those in dimension 3.

\end{abstract}

\tableofcontents

\section{Introduction}

As a dramatic extension of hyperbolic  manifolds in topological setting, there is 

\begin{defn} Let $M$ be a closed $n$-manifold, we say $M$ is  \emph{topologically negatively curved} if  $M$ is aspherical and the fundamental group of $M$ is an infinite hyperbolic  group in the sense of Gromov \cite{Gromov:1983}. The quasi-isometric class of the fundamental group of a given   topologically negatively curved manifold $M$ is  called  a \emph{topologically negatively curved geometry}. 
\end{defn}

By Theorem B of \cite{BLW:2010}, for dimension $n \geq 6$, whether a group can be realized as the fundamental group of a topologically negatively curved $n$-manifold is a quasi-isometric invariant. Thus, a topologically negatively curved geometry is well-defined (at least in  dimension $n \geq 6$).
 It is well-known that in 2-dimensional case, there is exactly one quasi-isometric class of topologically negatively curved manifolds, namely that of closed hyperbolic surfaces.
 By Thurston's Geometrization Conjecture (proved by Perelman), there is exactly one quasi-isometric class of topologically negatively curved 3-manifolds, namely that of closed hyperbolic 3-manifolds. However,  in 4-dimensional case, closed manifolds with  real hyperbolic structures  and  closed manifolds with  complex  hyperbolic structures  are in two different  quasi-isometric classes 	\cite{Pansu:1989}.

 There are even more topologically negatively curved geometries in dimension at least 4 with various  finer properties,   see 	\cite{CharneyDavis:1995, DavisJ:1991, LafontR:2024, KrophollerV:2024} for more constructions.
Topological negatively curved manifolds inherit the core  ``negatively curved" geometric intuition from hyperbolic spaces, such as the exponential divergence of geodesics, while breaking free from the strict metric constraints of hyperbolic geometry.
This extension allows mathematicians to explore a broader class of spaces that retain the key topological consequences of hyperbolicity, while also accommodating more flexible structures that are critical for addressing questions about quasi-isometric classification and universal covers.

The main technical result in this paper is

\begin{thm} \label{thm:conformaldim} Let $N$ be a 
	closed orientable 3-manifold. There  is  a sequence of 4-dimensional  hyperbolic   Coxeter groups $\{W_k\}^{\infty}_{k=1}$  with   nerves a sequence of   triangulations $\{\mathcal{T}_k\}^{\infty}_{k=1}$ of $N$, such that  the conformal dimensions of the Gromov boundaries of $W_k$ diverge to the infinity. 
	
\end{thm}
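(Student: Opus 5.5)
We build the groups as right-angled Coxeter groups. For a flag triangulation $\mathcal{T}$ of $N$, let $W_{\mathcal{T}}$ be the right-angled Coxeter group whose defining graph is the $1$-skeleton of $\mathcal{T}$. Its nerve is then $\mathcal{T}$ itself. The Davis complex $\Sigma_{\mathcal{T}}$ is a CAT(0) cube complex and a contractible $4$-manifold, since $\mathcal{T}$ is a PL triangulation of a closed $3$-manifold. Moussong's criterion says $W_{\mathcal{T}}$ is hyperbolic exactly when $\mathcal{T}$ has no empty squares, i.e.\ no induced $4$-cycles. So the first step is to produce, for each $k$, a flag triangulation $\mathcal{T}_k$ of $N$ with no induced $4$-cycle. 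Every closed $3$-manifold admits such triangulations; this follows from Davis--Januszkiewicz type hyperbolization results. A concrete route is to start from any triangulation and apply repeated barycentric or edge subdivision, then a ``no-square'' subdivision in the style of Przytycki--\'Swi\k{a}tkowski or Dranishnikov. I would cite this rather than reprove it.

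The second step forces the conformal dimension to grow, using a lower bound. I would use the standard Pansu/Bourdon estimate: if $\partial W$ contains a family of curves of positive modulus, or if $W$ contains a quasi-isometrically embedded copy of a Fuchsian building whose boundary has conformal dimension $\ge 1+\log(q-1)/\log(\cdots)$, then $\operatorname{Confdim}\partial W$ is bounded below by that number. More directly, Bourdon's right-angled Fuchsian buildings $I_{p,q}$ have boundary of conformal dimension
\[ 1+\frac{\log(q-1)}{\operatorname{arccosh}\bigl((p-2)/2\bigr)}, \]
which tends to $\infty$ as $q\to\infty$ with $p$ fixed, say $p=5$. The building $I_{5,q}$ is the Davis complex of the right-angled Coxeter group on the graph obtained from a pentagon by replacing each vertex with $q$ pairwise non-adjacent vertices (a ``thickened pentagon''), and that group sits in $W_{\mathcal{T}}$ as a special subgroup whenever the full subgraph appears as an induced subgraph of the $1$-skeleton. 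Special subgroups of Coxeter groups are convex, hence quasi-isometrically embedded, so their boundaries embed in $\partial W_{\mathcal{T}}$ as closed subsets. Conformal dimension is monotone under quasisymmetric embeddings of boundaries of convex subgroups. This gives $\operatorname{Confdim}\partial W_{\mathcal{T}}\ge \operatorname{Confdim}\partial I_{5,q}$.

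The third, and main, step is to build $\mathcal{T}_k$ so that its $1$-skeleton contains an induced thickened pentagon with $q=k$ while remaining a flag no-square triangulation of $N$. My plan is as follows.
\begin{enumerate}
\item Start from a flag no-square triangulation $\mathcal{T}$ of $N$ from the first step.
\item Locally, inside a $3$-ball of $\mathcal{T}$, realize the thickened pentagon $P_{5,k}$ as an induced subgraph. To do this, triangulate a solid torus neighbourhood of an induced $5$-cycle so that each vertex of the cycle is replaced by $k$ vertices spread around a meridian disk, with consecutive groups forming complete bipartite joins along the core.
\item Fill in the rest of the ball by a no-square flag triangulation that introduces no new edges among these $5k$ vertices.
\end{enumerate}
Since $P_{5,k}$ is the $1$-skeleton of a $2$-complex made of squares $K_{k,k}$, one must check that no unwanted induced squares are created. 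Here lies the problem: the complete bipartite joins $K_{k,k}$ between consecutive groups contain induced $4$-cycles, so $W_{\mathcal{T}}$ would fail Moussong's condition. Hence the right-angled model must be modified.

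I would handle this by taking the $p$-gon with $p$ large, say $p\ge 5$, and replacing the right-angled building with a hyperbolic one built from pentagons that carry thickness only along edges. Equivalently, I would use Bourdon's observation that $I_{p,q}$ is quasi-isometric to convex subgroups of hyperbolic right-angled Coxeter groups whose nerve is a thick graph without squares: for example, the generalized $\theta$-graphs or the ``$q$-fold subdivided'' pentagon graphs. I expect verifying the embedding while preserving the no-square condition in dimension $3$ to be the main obstacle. The first thing I would try is a local model on a tubular neighbourhood of a knot in $N$, glued via an edge-subdivision argument that preserves flagness and the absence of squares.

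Once this is done, letting $k\to\infty$ gives the divergence of conformal dimension.
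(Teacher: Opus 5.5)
Your proposal has a genuine gap, and it sits exactly where the content of the theorem lies. The monotonicity step is fine and is the same mechanism the paper uses: a special subgroup on a full subgraph is quasiconvex, so its boundary's conformal dimension bounds $\operatorname{Confdim}\partial W$ from below. But step 3 is only announced as ``the main obstacle'' and never carried out. That step is to produce a \emph{flag-no-square} triangulation of $N$ containing a graph $\Gamma_k$ with $\operatorname{Confdim}\partial W_{\Gamma_k}\to\infty$ as a \emph{full} subgraph, and it is not a routine check. The known no-square subdivision of Przytycki--\'Swi\k{a}tkowski replaces each tetrahedron by $\mathcal{C}_{543}$, which subdivides every original edge (the vertices $a_{ij}$). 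Any graph placed in the $1$-skeleton is therefore destroyed as a full subgraph.

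The paper does not know how to do step 3 in the right-angled setting either. It first embeds $\Gamma$ in a triangulated Heegaard surface with the sparsity conditions of Lemma~\ref{lemma:inducedgraph}. To keep the edges of $\Gamma$ unsubdivided it then uses the re-marked piece $\mathcal{C}'_{543}$. Its non-standard face subdivisions $c_4$ and $c_{30}$ force neighbouring pieces $\mathcal{C}_{1137}$ and $\mathcal{C}_{1163}$, which genuinely contain empty triangles and empty squares; further empty squares arise across adjacent pieces (Theorem~\ref{thm:almostfns}).

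The missing idea is to give up the right-angled condition. The paper puts angles $\pi/3,\pi/4,\pi/5,\pi/p$ on a sparse set of edges off $\Gamma$, which achieves the following:
\begin{itemize}
\item every simplex of $\mathcal{T}^*$ still spans a finite group;
\item each empty triangle or square spans an infinite hyperbolic polygon group, rather than a Euclidean or $D_\infty\times D_\infty$ one;
\item hence the nerve is exactly $\mathcal{T}^*$ and Moussong's criterion gives hyperbolicity;
\item all edges of $\Gamma$ keep angle $\pi/2$, so $W_\Gamma$ remains a special, hence quasiconvex, subgroup.
\end{itemize}

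Your input graphs are also not yet the right ones. You correctly see that a thickened pentagon contains $K_{k,k}$ and hence empty squares, but the proposed substitutes do not rescue the argument. A subdivided pentagon is just a longer cycle, whose right-angled Coxeter group is a cocompact Fuchsian reflection group with boundary $S^1$. A generalized $\theta$-graph group splits over the $D_\infty$ generated by its two branch vertices, with virtually free vertex groups, so by Carrasco--Mackay its boundary has conformal dimension $1$. What is needed, and what the paper uses (Theorem~\ref{thm:highconformaldim}), is the Cashen--Dani--Schreve--Stark family of bipartite flag-no-square graphs $\Gamma_k$ with $\operatorname{Confdim}\partial W_{\Gamma_k}\ge k$.

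As an aside, your claim that the Davis complex with nerve $\mathcal{T}$ is a contractible $4$-manifold is false unless $N\cong S^3$, because vertex links are copies of $N$. This is not needed for the statement.
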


In the proof of Theorem \ref{thm:conformaldim}, the triangulations $\{\mathcal{T}_k\}^{\infty}_{k=1}$ of $N$ are ``almost flag-no-square". By an ``almost flag-no-square" triangulation $\mathcal{T}$ of $N$, we mean that $\mathcal{T}$ is not a flag-no-square triangulation, but it contains only very few empty triangles and empty squares, which are moreover very sparse  in $\mathcal{T}$; see Theorem \ref{thm:almostfns} for the construction of these triangulations.

From  Theorem \ref{thm:conformaldim}, we have the following theorem, which is the main motivation of the paper. 

\begin{thm} \label{thm:infiniteqi}

	 Let $N$ be a 3-dimensional integral homology sphere. There exists a sequence of 4-dimensional topologically negatively curved manifolds $\{M_k \}^{\infty}_{k=1}$, such that:

	\begin{itemize}
		
		\item  (Same universal cover)  All of the aspherical manifolds  $\{M_k \}^{\infty}_{k=1}$ have the same universal cover $U = U_N$ that depends on $N$. $U_N$ is  not homeomorphic to $\mathbb{R}^4$ if and only if  $N$ is  not homeomorphic to 3-sphere  $\mathbb{S}^3$;
		
		\item (Same boundary)  All of the hyperbolic groups $\{ \pi_{1}(M_k) \}^{\infty}_{k=1}$ have the same boundary at infinity, namely the tree of manifolds  $\chi(N \sharp \overline{N})$, which is  not  a 3-manifold  if and only if  $N$ is  not homeomorphic to 3-sphere  $\mathbb{S}^3$;

				\item (Hierarchies of subgroups)  There are quasi-convex,  hyperbolic subgroups  $H_k < K_k < \pi_{1}(M_k)$, with     the boundary of $H_k$  being the Menger curve and   the boundary of $K_k$  being the Pontryagin sphere for each $k$;

					\item (Infinite  geometries)  The conformal dimensions of the Gromov boundaries of groups $\{\pi_{1}(M_k) \}^{\infty}_{k=1}$ diverge to the infinity. In particular, there are infinitely many  quasi-isometric classes in  $\{ \pi_{1}(M_k) \}^{\infty}_{k=1}$.
			
		\end{itemize}
	
\end{thm}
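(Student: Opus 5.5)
The plan is to deduce Theorem \ref{thm:infiniteqi} from Theorem \ref{thm:conformaldim} using the Davis reflection group trick and the Davis–Januszkiewicz/Fischer description of Coxeter group boundaries.

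\textbf{Step 1: the manifolds.} Apply Theorem \ref{thm:conformaldim} to $N$ to obtain hyperbolic right-angled (or at least flag-nerve) Coxeter groups $W_k$ whose nerves are triangulations $\mathcal{T}_k$ of $N$. Since $N$ is an integral homology sphere, each $\mathcal{T}_k$ is a generalized homology $3$-sphere. By Davis' construction, the Davis complex $\Sigma_k$ is then a contractible $4$-manifold. For a torsion-free finite-index subgroup $\Gamma_k<W_k$ (Selberg/Coxeter groups are virtually torsion-free), set $M_k=\Sigma_k/\Gamma_k$. This is a closed aspherical $4$-manifold with $\pi_1(M_k)=\Gamma_k$ hyperbolic, because $W_k$ is hyperbolic by the no-empty-square (Moussong) condition that the ``almost flag-no-square'' construction must still guarantee. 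So $M_k$ is topologically negatively curved.

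\textbf{Step 2: same universal cover and boundary.} This step needs two known results, each depending only on the topology of the nerve $N$ and not on the triangulation. First, Davis' theorem that $\Sigma_k$ is homeomorphic to the interior of the manifold with boundary obtained by an infinite ``doubling'' procedure. Its end is built from copies of $N$ and is simply connected at infinity iff $\pi_1(N)=1$, i.e.\ iff $N\cong \mathbb{S}^3$ by Perelman. This gives a universal cover $U_N$ independent of $k$, and $U_N\cong\mathbb{R}^4$ iff $N\cong \mathbb{S}^3$. Second, Fischer's (and Davis–Januszkiewicz–style) theorem that the Gromov boundary of a hyperbolic Coxeter group whose nerve is a PL manifold triangulation of $N$ is the tree of manifolds $\chi(N\sharp\overline N)$. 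This is a manifold iff $N\sharp\overline N\cong\mathbb{S}^3$, iff $N\cong \mathbb{S}^3$. Both results are used as black boxes, taking care that passing to finite index $\Gamma_k$ changes neither the universal cover nor the boundary.

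\textbf{Step 3: hierarchies and quasi-isometry classes.} For the subgroups, choose inside $\mathcal{T}_k$ a full subcomplex $L_2\subset L_3$ with $L_3$ a flag-no-square triangulated closed $2$-manifold, or rather a $2$-dimensional full subcomplex. Take $L_2$ to be a full $1$-dimensional subgraph of girth $\ge 5$ that is not planar, and $L_3$ a full $2$-dimensional subcomplex that is a non-planar flag-no-square $2$-complex with appropriate acyclicity. Special subgroups $W_{L}$ of full subcomplexes are quasi-convex. By Benakli/Dymara–Osajda/Świątkowski, their boundaries are the Menger curve and the Pontryagin sphere respectively. Intersecting with $\Gamma_k$ gives $H_k<K_k<\pi_1(M_k)$. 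I expect that these full subcomplexes can be found in a sufficiently fine (e.g.\ barycentrically subdivided) region of $\mathcal{T}_k$ away from the sparse empty triangles and squares.

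Finally, conformal dimension of the boundary is a quasi-isometry invariant, and $\Gamma_k$ is quasi-isometric to $W_k$. Hence divergence of $\mathrm{Confdim}(\partial W_k)$ from Theorem \ref{thm:conformaldim} yields infinitely many quasi-isometry classes.

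The main obstacle is the hierarchy bullet: certifying that the boundaries are exactly the Menger curve and Pontryagin sphere requires verifying the hypotheses of those characterization theorems for explicit full subcomplexes inside the specific triangulations $\mathcal{T}_k$.
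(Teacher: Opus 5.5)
Your outline has the same architecture as the paper's proof: a Davis-type manifold built from the Coxeter groups of Theorem~\ref{thm:conformaldim}, the tree-of-manifolds boundary results, quasi-isometry invariance of conformal dimension, and special subgroups for the hierarchy. However, two steps fail as written.

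\textbf{The manifolds.} Step~1 is wrong exactly when $N\not\cong\mathbb{S}^3$, which is the main case. The Davis complex $\Sigma_k=\mathcal{U}(W_k,K)$, with $K$ the cone on the barycentric subdivision of the nerve $\mathcal{T}^*_k$, is only a contractible \emph{homology} $4$-manifold. The cone point of each chamber lies on no mirror, so it has a neighbourhood homeomorphic to the open cone on $\mathcal{T}^*_k\cong N$. That cone is not locally Euclidean unless $\pi_1(N)=1$. Hence $\Sigma_k/G_k$ is not a manifold (I write $G_k$ for your torsion-free subgroup, since $\Gamma_k$ denotes the graph in the paper). For the same reason, the homeomorphism you claim at the start of Step~2 cannot hold for $\Sigma_k$.

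You used the homology-sphere hypothesis in the wrong place. Its real role, as in the paper, is that by Freedman's theorem $N$ bounds a compact contractible $4$-manifold $M$. Give $\partial M=N$ the cell structure dual to $\mathcal{T}^*_k$; this makes $M$ a manifold with corners whose faces correspond to simplices of the nerve. Davis's space $\mathcal{U}(W_k,M)$, made of copies of $M$ glued along mirrors according to $W_k$, is then a genuine contractible $4$-manifold with a proper cocompact $W_k$-action. This works because every simplex of $\mathcal{T}^*_k$ spans a finite special subgroup, so the local models are finite reflection groups. By Davis, this manifold is not simply connected at infinity when $\pi_1(N)\neq 1$, and one takes $M_k=\mathcal{U}(W_k,M)/G_k$. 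Note also that the $W_k$ are not right-angled and $\mathcal{T}^*_k$ is not flag (it has empty triangles); all you need is that $\mathcal{T}^*_k$ is the nerve.

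\textbf{The hierarchy.} You leave this bullet open, and your suggested remedy is not available. You cannot barycentrically subdivide part of $\mathcal{T}^*_k$ without changing the nerve and hence $W_k$. Also, conditions such as girth at least $5$ plus non-planarity do not determine the boundary; a cut vertex already makes it disconnected.

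The missing point is that the construction itself supplies the hierarchy, and this is lost when Theorem~\ref{thm:conformaldim} is used as a black box. By Lemma~\ref{lemma:inducedgraph} and Theorem~\ref{thm:almostfns}, the bipartite flag-no-square graph $\Gamma_k$ of Theorem~\ref{thm:highconformaldim} is a full subgraph of $\mathcal{T}^*_k$. It lies in the full subcomplex $t^*_k$ that triangulates the Heegaard surface $\Sigma_g$, $g\geq 1$. Every edge of $\Gamma_k$ lies in a copy of $\mathcal{C}'_{543}$ and so carries the angle $\pi/2$. Hence $W_{\Gamma_k}$ is precisely the right-angled group of Theorem~\ref{thm:highconformaldim}, which is the same quasi-convex subgroup that makes the conformal dimension of $\partial W_k$ diverge. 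The chain $W_{\Gamma_k}<W_{t^*_k}<W_k$ consists of special subgroups on full subcomplexes, hence quasi-convex, with boundaries the Menger curve and the Pontryagin sphere. Intersecting with $G_k$ gives $H_k<K_k<\pi_1(M_k)$.

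The rest of your outline agrees with the paper: the boundary $\chi(N\sharp\overline{N})$, its invariance under passing to finite index, and the quasi-isometry invariance of conformal dimension.
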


For various 3-dimensional integral homology spheres $N$, there exist 4-dimensional negatively curved manifolds with infinitely many distinct topological types of universal covers $U_N$ associated to $N$ \cite{Davis:1983}. Additionally, for various such $N$, the fundamental groups of these 4-manifolds admit infinitely many topological types of boundaries  $\chi(N \sharp \overline{N})$ at infinity \cite{PrzytyckiS:2009, Swiątkowski:2020}.
These two phenomena  imply the existence of infinitely many quasi-isometric classes among such manifolds.

The new feature of Theorem \ref{thm:infiniteqi} is the revelation that, for varying values of $N$, 4-dimensional negatively curved manifolds exhibit three-fold infinite topological diversity: infinitely many distinct topological types of universal covers, infinitely many topological types of boundaries at infinity, and consequently, infinitely many quasi-isometric classes.
It is plausible to believe that there are infinitely many quasi-isometric classes of topological negatively curved manifolds sharing the same universal cover; however, a rigorous proof has long been lacking. Even for the case where the universal cover is the four-dimensional Euclidean space and the boundary is the three-dimensional sphere, the existence of infinitely many quasi-isometric classes of such topological negatively curved manifolds has been rigorously proven for the first time. This should be compared  with \cite{SistoViaggi:2026}, where Sisto and Viaggi  develops criteria for distinguishing the homeomorphism types of Gromov-Thurston manifolds, but does not address their quasi-isometric classes; these manifolds are certain negatively curved Riemannian $n$-manifolds.
Theorem  \ref{thm:infiniteqi} implies that  negatively curved geometries in dimension at least 4 are more complicated than  dimension  3.  
Compared with the use of special subdivisions of 3-complexes to study limit sets of higher-dimensional Kleinian groups \cite{DLMR:2026}, the relatively special subdivision in Theorem \ref{thm:almostfns} appears to be independently interesting.

\textbf{Outline of the proofs.} First, there exists a flag-no-square graph~$\Gamma$ such that the conformal dimension of the hyperbolic right-angled Coxeter group~$W_{\Gamma}$ associated to~$\Gamma$ can be arbitrarily large	\cite{CDSStark:2025}. Second, for a 3-dimensional integral homology sphere $N$,  we tend to take  a triangulation $\mathcal{T}$ of $N$ which is flag-no-square and contains $\Gamma$ as a full subgraph.  Then the  right-angled hyperbolic  Coxeter   group  $W_{\Gamma}$  with nerve $\Gamma$ is a quasi-convex subgroup of the  right-angled hyperbolic  Coxeter   group  $W_{\mathcal{T}}$  with nerve $\mathcal{T}$. In turn, the conformal dimension  of the boundary of the   group $W_{\mathcal{T}}$  is bigger than  the conformal dimension of the boundary of the  group $W_{\Gamma}$.
 
 Although constructing such a triangulation~$\mathcal{T}$  of $N$ is difficult, it is still feasible via the following approach. Specifically, we construct an ``almost flag-no-square'' triangulation~$\mathcal{T}$ of~$N$: it is neither flag nor square-free, but we retain control over its empty triangles and squares. Unlike the special subdivision obtained from an initial triangulation~$\mathcal{T}'$ in~\cite{PrzytyckiS:2009}, our~$\mathcal{T}$ is a relatively special subdivision associated with the pair~$(\mathcal{T}',\Gamma)$. This triangulation ~$\mathcal{T}$ has two key features:
 \begin{itemize}
 	\item From the triangulation~$\mathcal{T}$, we obtain a hyperbolic ``almost right-angled'' Coxeter group~$W_{\mathcal{T}}$, which is not purely right-angled: we assign to some of the edges in~$\mathcal{T}$ the angle~$\pi/p$ for $p=3,4,5$. The control over empty triangles and squares in~$\mathcal{T}$ ensures that the nerve of the Coxeter group~$W_{\mathcal{T}}$ is precisely the triangulation~$\mathcal{T}$ of~$N$;
 	\item $\mathcal{T}$ contains~$\Gamma$ as a subgraph, and we assign to each edge of~$\Gamma$ the angle~$\pi/2$. Hence $W_{\mathcal{T}}$ contains the right-angled Coxeter group~$W_{\Gamma}$ as a quasi-convex subgroup.
 \end{itemize}
 Then we obtain the technical Theorem~\ref{thm:conformaldim}.
 In fact, more than 99.9 percent
  of the angles in the Coxeter system are $\pi/2$, while a small number of angles are strictly smaller. 
  Although dropping the right-angled condition is indispensable to our proofs, we believe that using the flag-no-square condition to prove Theorem \ref{thm:conformaldim} remains of independent interest.

We remark that, although the preparations in Subsections~\ref{section:graph} and \ref{section:almostfns3ball} make Theorem~\ref{thm:conformaldim} look easy to prove at first glance, a fully rigorous argument is considerably more involved. Indeed, one must handle several layers of subcases, as demonstrated in Section~\ref{section:almostfns}. The main difficulty lies in counting all the empty squares in the triangulation \(\mathcal{T}^*\) in Theorem~\ref{thm:almostfns}, a task that is quite intricate.

\textbf{Outline of the paper.} 
In Section \ref{section:Coxeter}, we give some preliminary results on Coxeter groups. In Section  \ref{section:graph}
we embed any given graph~$\Gamma$ into a triangulation~$\mathcal{T}$ of a $3$-manifold~$N$ in a controlled manner, this triangulation will be modified later in the proof Theorem  \ref{thm:almostfns}.
 We then  prepare some 3-complexes which are triangulations of 3-balls in Section  \ref{section:almostfns3ball}.
 In Section  \ref{section:almostfns}, we transform the triangulation $\mathcal{T}$ in Lemma 
 	 \ref{lemma:inducedgraph}  via the 3-complexes in  Section  \ref{section:almostfns3ball} to
 	  an ``almost flag-no-square" triangulation $\mathcal{T}^*$ of $N$. 
 	   Assigning angles on edges of  $\mathcal{T}^*$, we finally obtain Theorem \ref{thm:conformaldim} in  Section \ref{section:conformaldim}.
 In Section \ref{section:infiniteqi}, we prove Theorem \ref{thm:infiniteqi}.

   For the conciseness of this paper, we omit the definitions of hyperbolic groups, quasi-isometries, Gromov boundaries, and  conformal dimensions, etc; the reader may refer to 	\cite{Gromov:1983,	CDSStark:2025} for these concepts.

\textbf{Notations.} In this paper, 
\begin{itemize}
		\item
\(A_i\) and \(B_j\) denote the vertices of a bipartite graph \(\Gamma\);
	\item  The symbols \(A\), \(B\), \(C\), and \(D\) denote special vertices of the \(3\)-complex \(\mathcal{C}'_{543}\);
	
	\item  The symbols \(a_i\), \(v_i\), and \(w_{i,jk}\) denote special vertices of the \(3\)-complex \(\mathcal{C}_{600}\);
	
		\item  The symbols \(F_i\), for \(1 \leq i \leq 120\), denote facets of the 120-cell \(\mathcal{Y}_{120}\).
	
	\item The symbols \(G\), \(I\), \(E\), \(F\), \(H\),  \(C_i\), and \(D_j\) denote general vertices of a \(3\)-complex \(\mathcal{T}\).
	
	\end{itemize}

\textbf{AI Declaration.}	The author used an AI tool  only for English language editing and grammar correction. No mathematical arguments, proofs, or computations were generated or checked by AI. All mathematical content and results are the author's own.

\section{Coxeter groups}  \label{section:Coxeter}

We give some preliminary results of Coxeter groups we will use in this section.


In this paper, by a triangulation we always mean a simplicial triangulation.
Recall that a triangulation $\mathcal{T}$  of an $m$-dimensional complex $X$ is \emph{flag} if, for any set of vertices $\{D_1,D_2, \cdots, D_{k+1}\}$ in $\mathcal{T}$  such that $\{D_i,D_j\}$ spans an edge in $\mathcal{T}$  for all $1 \leq i<j \leq k+1$, the set $\{D_1,D_2, \cdots, D_{k+1}\}$ spans a $k$-simplex in  $\mathcal{T}$.
If a set of vertices $\{D_1,D_2, \cdots, D_{k+1}\}$ in  $\mathcal{T}$  has the property that every $k$-element subset spans a $(k-1)$-simplex in $\mathcal{T}$, but $\{D_1,D_2, \cdots, D_{k+1}\}$ itself does not span a $k$-simplex in $\mathcal{T}$, then $\{D_1,D_2, \cdots, D_{k+1}\}$ is called an \emph{empty $k$-simplex} in $\mathcal{T}$.
A 4-circle in a simplicial complex $X$ is an \emph{empty square} where neither of the diagonals is an edge  in the complex. A triangulation   $\mathcal{T}$ of an $m$-dimensional  complex $X$ is    \emph{flag-no-square} if it is flag and contains no empty squares. For a simplicial triangulation $\mathcal{T}$ of  a complex $X$, if there is an edge  connecting vertices  $D_i$ and $D_j$ in $ \mathcal{T}^{(0)}$, we sometimes denote  this edge by $[D_i,D_j]$. Recall that  for a simplicial complex $\mathcal{K}$, a subcomplex $\mathcal{L}$  of
$\mathcal{K}$   is \emph{full} if every set $\{D_0, D_1, \dots, D_n\} \subset \mathcal{L}^{(0)}$ that spans an $n$-simplex in $\mathcal{K}$ satisfies that this  $n$-simplex also lies in $\mathcal{L}$.

Given a triangulation $\mathcal{T}$ of a complex $X$, and a set of angles $$\mathcal{A}=\{\alpha_{e}\}_{e \in \mathcal{T}^{(1)}},$$ where $\alpha_{e}= \pi/m_e$ with some $m_e \in \mathbb{Z}_{\geq 2}$ for an edge $e$ of  $\mathcal{T}$. We can associate to this data a  \emph{Coxeter group} $W= W_{(\mathcal{T}, \; \mathcal{A})}$, defined by the following rules:
\begin{enumerate}
	\item For each vertex $D_i$ in $\mathcal{T}$, there corresponds  an involution; for simplicity, we reuse the vertex label $D_i$ to denote this involution;

	\item If two vertices  $D_i$ and $D_j$ in $\mathcal{T}$ are connected  by an edge $e$,  then the group $W_\mathcal{T}$ satisfies  the relation $(D_i\cdot D_j)^{m_e}={\rm id}$.
\end{enumerate}

The definition of a Coxeter group only involves the $1$-skeleton $\mathcal{T}^{(1)}$ of  $\mathcal{T}$, not the higher-dimensional simplices. However, we assume the following condition: for any subset $S=\{D_1, D_2, \dots, D_{k+1}\}$ of the vertex set $\mathcal{T}^{(0)}$, the corresponding subgroup $W_S$ of $ W_{(\mathcal{T}, \; \mathcal{A})}$ is finite  if and only if  $S$ spans a $k$-simplex in $\mathcal{T}$.
Under these conditions, the triangulation $\mathcal{T}$ is also called the \emph{nerve} of the Coxeter group $W_{(\mathcal{T},  \;\mathcal{A})}$. 
In particular, if  we assign the angle $\pi/2$ to each edge $e$ of  $\mathcal{T}^{(1)}$, the resulting group is a \emph{right-angled Coxeter group}, denoted by $W= W_{\mathcal{T}}$.

Section 6 of  \cite{CDSStark:2025} gives many examples of graphs as in the  following Theorem \ref{thm:highconformaldim}, which is one of the  keys in our proofs of Theorems \ref{thm:conformaldim} and  \ref{thm:infiniteqi}:

\begin{thm} \label{thm:highconformaldim} (Cashen-Dani-Schreve-Stark \cite{CDSStark:2025}) For any $k \in \mathbb{Z}_+$, there is a bipartite flag-no-square graph $\Gamma=\Gamma_k$ such that   the   conformal dimension of the boundary of $W_{\Gamma}$ is at least $k$, where  $W_{\Gamma}$ is the  hyperbolic right-angled Coxeter group with nerve  $\Gamma$.
\end{thm}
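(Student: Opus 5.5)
This theorem is quoted from Cashen--Dani--Schreve--Stark \cite{CDSStark:2025}, so the proof here will be a sketch of their argument in the notation of the present paper. The plan has three parts: construct the graph, check that $W_\Gamma$ is hyperbolic with a $1$-dimensional nerve, and bound the conformal dimension from below by exhibiting a rich family of curves in the boundary.

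\textbf{Step 1: the graph.} Fix $k$ and choose a bipartite graph $\Gamma=\Gamma_k$ with vertex classes $\{A_i\}$ and $\{B_j\}$. We require girth at least $6$ and large minimal valence $d=d(k)$. Such graphs exist, for instance incidence graphs of generalized polygons, or random high-girth biregular graphs. Bipartite means there are no triangles, so $\Gamma$ is flag as a $1$-dimensional complex. Girth at least $6$ rules out $4$-cycles, so $\Gamma$ has no empty squares.

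\textbf{Step 2: hyperbolicity and the Davis complex.} By Moussong's criterion, the right-angled Coxeter group $W_\Gamma$ is hyperbolic, since $\Gamma$ is flag-no-square. Its Davis complex $\Sigma_\Gamma$ is a $\mathrm{CAT}(-1)$ square complex, made CAT($-1$) by using right-angled hyperbolic pentagons, or squares after a rescaling argument. The link of each vertex of $\Sigma_\Gamma$ is $\Gamma$ itself. Because $\Gamma$ has no cut vertices and no separating edges, the boundary $\partial W_\Gamma$ is connected, with no local cut points. Indeed, it is expected to be a Menger curve, which fits the hierarchy statement in Theorem \ref{thm:infiniteqi}.

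\textbf{Step 3: lower bound on conformal dimension.} Here we use Pansu's or Bourdon's method: find a family of curves in $\partial W_\Gamma$ carrying a measure of large modulus. The concrete mechanism is as follows.
\begin{itemize}
\item The vertex $1$ in the Davis complex has link $\Gamma$.
\item Each edge of $\Gamma$ spans a copy of a right-angled $\mathbb{Z}/2\times\mathbb{Z}/2$ region.
\item Cycles of $\Gamma$ give quasi-convex surface or Fuchsian subgroups, with circles in the boundary.
\item High valence makes these circles branch: each point lies on roughly $d$ independent continuations.
\end{itemize}
One then builds a combinatorial curve family $\mathcal{F}$ of boundary circles and shows that its combinatorial $p$-modulus stays bounded below at all scales for $p<c\log d$. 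This would follow Bourdon--Pajot and Keith--Kleiner style estimates, or Bourdon's computation for Fuchsian buildings, where the conformal dimension equals $1+\log(q-1)/\text{(growth)}$. By the modulus/conformal dimension inequality, we get $\mathrm{Confdim}(\partial W_\Gamma)\ge 1+c\log d$. Choosing $d$ with $c\log d\ge k$ finishes the proof.

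\textbf{Main obstacle.} The hard step is the modulus estimate in Step 3. One must control the combinatorial modulus uniformly across all scales in a space that is not a building, so there is no exact self-similarity. To handle this, I would try to find a quasi-convex subgroup of $W_\Gamma$ that is a right-angled Fuchsian building, where Bourdon's formula applies. Conformal dimension is monotone under quasi-convex subgroups, so it would then suffice to embed, as a full subgraph of $\Gamma$, a generalized $m$-gon incidence graph with thickness $q$ large. Full subgraphs give quasi-convex special subgroups.
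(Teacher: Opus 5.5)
\textbf{Verdict: there is a genuine gap, in Step 3.} The paper gives no argument of its own here; Theorem \ref{thm:highconformaldim} is imported from Section 6 of \cite{CDSStark:2025}. Judged as a proof, your sketch breaks down at the one step that carries all the content.

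Steps 1 and 2 are fine for what the statement needs. Bipartite gives no triangles, girth $\ge 6$ gives no squares, and Moussong gives hyperbolicity. The claims about local cut points and the Menger curve are not needed. They are also not justified by your hypotheses: you never imposed 2-connectivity, and ruling out local cut points also requires excluding cut pairs.

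The bound $\mathrm{Confdim}(\partial W_\Gamma)\ge 1+c\log d$, however, is only asserted. No curve family is constructed, no uniform modulus estimate is proved, and none of the results you name gives such a bound for an arbitrary bipartite graph of girth $\ge 6$ and large valence. In any round-tree or modulus argument, the branching factor $d-1$ is weighed against the scale at which branching recurs. That scale depends on the cycle structure of $\Gamma$: which cycles pass through a given path, and how long they are (a cycle of length $\ell$ yields a plane with $\ell$ squares at each vertex). It does not depend on valence alone. For the random high-girth graphs of Step 1 the proposal says nothing about this, so no constant $c$ has been justified.

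The fallback in your last paragraph points to a route that does work, but it is misstated and must replace Step 3 rather than supplement it. Let $\Lambda$ be the incidence graph of a finite generalized $m$-gon of order $q$, with $m\ge 3$.

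1. The Davis complex of $W_\Lambda$ is \emph{not} a right-angled Fuchsian building. That would require vertex links $K_{q+1,q+1}$, which contain squares. It is a Fuchsian building whose chambers are hyperbolic squares with all angles $\pi/m$ and whose panels have thickness $q+1$.
2. This needs proof. The homomorphism $W_\Lambda\to(\mathbb{Z}/2)^2$ sending one side of the bipartition to $(1,0)$ and the other to $(0,1)$ types the four corners of every square consistently. The rank-two residues are then the vertex links, which are copies of $\Lambda$ and hence generalized $m$-gons, and the hyperplane trees. There are no spherical rank-three residues, and the complex is simply connected, so Tits' local approach to buildings applies.
3. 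You then need a lower bound for the conformal dimension of Fuchsian buildings with this chamber. Bourdon's computation for Fuchsian buildings serves; the $I_{p,q}$ formula you quote is only its right-angled special case. It gives $\mathrm{Confdim}\to\infty$ as $q\to\infty$ with $m$ fixed, for example $m=3$ with projective planes of prime-power order.
4. There is nothing to embed: take $\Gamma_k=\Lambda$ itself, which is bipartite of girth $2m\ge 6$ and hence flag-no-square. A random high-girth graph as in your Step 1 would not even contain $\Lambda$.

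With these changes the argument closes; as submitted, the lower bound on conformal dimension is missing.
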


\section{Embeddings of graphs in triangulations of 3-manifolds}  \label{section:graph}


In this section, we embed any flag-no-square graph~$\Gamma$ into a triangulation~$\mathcal{T}$ of a closed orientable $3$-manifold~$N$ in a controlled manner. 
Lemma~\ref{lemma:inducedgraph} gives such an embedding, ensuring that $\Gamma$ is sparse in~$\mathcal{T}$. 
The three sparsity conditions may seem unnatural at first glance, but they are precisely what is needed for Theorem~\ref{thm:almostfns}. 
In the relatively special subdivision  of Theorem \ref{thm:almostfns}, the graph $\Gamma$ acts as a ``poison pill" for the no-square condition, that is, a structure that would otherwise invalidate the theorem's conclusions if left unconstrained.
Hence we impose the conditions of Lemma~\ref{lemma:inducedgraph} on~$\Gamma$.


\begin{lemma} \label{lemma:inducedgraph} Let $N$  be a 	closed orientable 3-manifold, and   $\Gamma$ be a flag-no-square,  bipartite graph $\Gamma$. The vertices of the graph~$\Gamma$ are $\{A_i\}_{i=1}^{m}$ and $\{B_j\}_{j=1}^{n}$. Then there exists a triangulation $\mathcal{T}$ of $N$, and   a genus $g \geq 1$  orientable surface $\Sigma_g$ embedded in $N$ such that:
	
		\begin{enumerate}
		
		\item \label{inducedgraph:1}  $\Sigma_g \hookrightarrow N$ as a full   subcomplex   $t$ of $\mathcal{T}$;
		
		\item \label{inducedgraph:2}  $\Gamma \hookrightarrow \Sigma_g$ as a full subgraph of  $t$;

			\item  \label{inducedgraph:3} For each 3-simplex $\Delta$ in $\mathcal{T}$, if $\Gamma \cap \Delta$  is non-empty, then it  is an edge or a vertex;

			\item \label{inducedgraph:4} (Sparsity I) For each  vertex $B_j$ of  $\Gamma$, suppose there exists vertices $A_1,  A_2, \dots, A_k$ in $\Gamma^{(0)}$ (the  $A_i$ need not be distinct), and pairwise distinct vertices $D_1,  D_2, \dots, D_k$ in $\mathcal{T}^{(0)} \setminus \Gamma^{(0)}$, with the convention $D_{k+1}=D_1$,  and 3-simplices  $\Delta_i$ whose  vertex set is  $\{B_j, A_i,D_i,D_{i+1}\}$ for every $1 \leq i \leq k$.    Then $k \geq  5$;

			\item \label{inducedgraph:5}  (Sparsity II) 	 If the tetrahedron $\Delta \in \mathcal{T}^{(3)}$ with  vertex set $\{B_j,G,H,F\}$ intersects $\Gamma$ in the vertex   $B_j$. 	Then     at most  one 2-face of $\Delta$, say the triangle with vertex set  $\{B_j, H,F\}$,  is adjacent to a  tetrahedron $\Delta' \in \mathcal{T}^{(3)}$ with vertex set $\{B_j,A_i,H,F\}$, such that the edge $[A_i,B_j]$ lies in $\Gamma$;
			
				\item \label{inducedgraph:6} (Sparsity III) There is no adjacent triangles in $\mathcal{T}^{(2)}$ with vertex sets $\{A_i,B_j, I\}$ and $\{A_k,B_j, I\}$ such that both $[A_i,B_j]$ and $[A_{k},B_j]$ lie in $\Gamma$; Similarly, there are no adjacent triangles in $\mathcal{T}^{(2)}$ with vertex sets $\{B_j,A_i, I\}$ and $\{B_k,A_i,I\}$ such that both $[B_j,A_i]$ and $[B_k,A_i]$ lie in $\Gamma$.

			\end{enumerate}

\end{lemma}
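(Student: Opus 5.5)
The plan is to build the triangulation in three stages: embed $\Gamma$ in a surface, thicken the surface into a product neighbourhood, and then fill in the rest of $N$. The sparsity conditions will be forced by subdividing the surface heavily near $\Gamma$ before anything else is built.

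\textbf{Stage 1: the surface.} Every finite graph embeds in some closed orientable surface $\Sigma_g$. For instance, take a regular neighbourhood of $\Gamma$ in $\mathbb{R}^3$, which is a handlebody, and let $\Sigma_g$ be its boundary; after adding a handle if needed we may assume $g\ge 1$. Every handlebody embeds in $N$, for example inside a small ball, so $\Sigma_g$ sits in $N$ as a Heegaard-type surface of a ball, or we can simply take $\Sigma_g\subset B^3\subset N$.

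Next, triangulate $\Sigma_g$ so that $\Gamma$ is a full subgraph of the triangulation $t$ and every edge of $\Gamma$ is an edge of $t$. Start from any triangulation containing $\Gamma$ as a subcomplex. Then subdivide every face away from $\Gamma$ by repeated barycentric subdivision, and add new vertices inside the faces adjacent to $\Gamma$. After this, any two vertices of $\Gamma$ are joined in $t$ only by edges of $\Gamma$, which gives (2).

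\textbf{Stage 2: the thickened surface.} Take a product neighbourhood $\Sigma_g\times[-1,1]$ and triangulate it as a layered prism complex over $t$, inserting intermediate levels $\Sigma_g\times\{\pm 1/2\}$ with copies of $t$. This makes $\Sigma_g\times\{0\}$ full, giving (1). Triangulate the complement of the open neighbourhood in $N$ so that it agrees with the boundary copies of $t$; such an extension exists by standard relative triangulation of 3-manifolds (Moise). A further derived subdivision away from $\Sigma_g\times(-1/2,1/2)$ keeps fullness.

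\textbf{Stage 3: the link conditions.} Conditions (3)--(6) all concern the link of a vertex $B_j$ or $A_i$ of $\Gamma$. This link is a 2-sphere containing the link in $t$, a circle $c$, separating two discs coming from the two sides of the surface.

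1. For (3), choose $t$ so that no triangle of $t$ contains two edges of $\Gamma$. This holds because $\Gamma$ is full in $t$ and no triangle has all three of its vertices in $\Gamma$. It is ensured by inserting a new vertex at the midpoint of each corner between consecutive $\Gamma$-edges at a vertex. With this choice, no tetrahedron meets $\Gamma$ in more than an edge.

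2. For (6), require that between any two $\Gamma$-edges at a vertex on the circle $c$ there are at least two non-$\Gamma$ vertices. Adjacent triangles $\{A_i,B_j,I\}$ and $\{A_k,B_j,I\}$ correspond to $A_i$ and $A_k$ both being adjacent to $I$ in the link of $B_j$. So the condition is that the $\Gamma$-neighbours in the link sphere have disjoint stars. Subdividing $t$ near $\Gamma$ so that each point of $c$ coming from a $\Gamma$-edge is separated by at least two further vertices handles this. We also make the prism triangulation link each link-vertex only to nearby ones.

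3. Condition (5) says that a triangle in the link of $B_j$ not meeting $\Gamma$ has at most one edge adjacent to a $\Gamma$-vertex. This follows if the $\Gamma$-vertices in the link sphere are pairwise at distance at least $3$ and no triangle has two edges touching the same $\Gamma$-vertex. The second part is automatic in a simplicial sphere where that vertex is not in the triangle.

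4. Condition (4) says that any closed edge-path $D_1\cdots D_k$ in the link of $B_j$ avoiding $\Gamma$, in which each edge $D_iD_{i+1}$ forms a triangle with a $\Gamma$-vertex $A_i$, has length $\ge 5$. Such a cycle lies in the union of the links of the $\Gamma$-vertices inside the link sphere of $B_j$. By the separation in item 2, these links are disjoint circles, so a closed path must go around one whole link circle of some $A_i$. It therefore suffices that each $\Gamma$-neighbour in the link of $B_j$ has degree $\ge 5$ there. This means each edge $[A_i,B_j]$ of $\Gamma$ lies in at least five tetrahedra, which is arranged by subdividing around these edges, for example with stellar subdivisions of triangles adjacent to them away from $\Gamma$.

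The main obstacle is making all of these subdivisions compatible: each must preserve fullness of $t$ and of $\Gamma$, and none may recreate short cycles. I would first fix a very fine local model of the link of a $\Gamma$-vertex, then build the prism triangulation explicitly from it, and verify (4)--(6) directly in that model.
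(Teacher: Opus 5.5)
There is a genuine gap in Stage~3. Some of your reductions there are correct: once (6) holds, (4) is equivalent to every edge of $\Gamma$ having degree at least $5$, and (3) follows from fullness of $\Gamma$ in $\mathcal{T}$ plus bipartiteness. But Stage~3 never produces a triangulation satisfying (4)--(6). You defer exactly this to an unspecified ``fine local model'', and the mechanism you do describe cannot work.

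Conditions (4)--(6) concern the whole link $2$-sphere $\operatorname{lk}(B_j)$ in $\mathcal{T}$. Refining $t$ only controls its equator $c$; the two hemispheres come from the collar triangulation. Take the standard triangulation of the prisms of $t\times[0,\tfrac12]$ given by an ordering $\prec$ of $t^{(0)}$. Then every edge $[A_i,B_j]$ of $\Gamma$ with $A_i\prec B_j$ lies in the triangle $\{A_i,B_j,(B_j,\tfrac12)\}$. So two $\Gamma$-neighbours of $B_j$ preceding it give exactly a forbidden pair of adjacent triangles in (6), with $I=(B_j,\tfrac12)$; the same holds symmetrically for the $A_i$. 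Hence (6) would force every vertex of $\Gamma$ to have at most one earlier $\Gamma$-neighbour, i.e.\ $|E(\Gamma)|\le|V(\Gamma)|$. This already fails for the Heawood graph (bipartite, girth $6$, $21$ edges on $14$ vertices), however finely $t$ is subdivided. Your phrase ``link each link-vertex only to nearby ones'' does not say what replaces this triangulation.

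Your argument for (5) also has a false step. A triangle $\{G,H,F\}$ of $\operatorname{lk}(B_j)$ can have two edges each spanning a triangle with the same $A_i$. This happens exactly when $H$ has degree $3$ in $\operatorname{lk}(B_j)$, e.g.\ after a stellar move, so it is not automatic in a simplicial sphere.

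To close the gap you must specify the collar and verify (4)--(6). Within your framework, one option is to triangulate the two layers adjacent to $\Sigma_g$ by coning each vertical square $e\times[0,\tfrac12]$, and then each prism $\sigma\times[0,\tfrac12]$, from its barycentre. Then:
\begin{itemize}
\item no vertex off $\Sigma_g$ is adjacent to $\Gamma$-vertices other than the two ends of a single $\Gamma$-edge, so (6) reduces to your equator condition on $t$;
\item every neighbour of $A_i$ in $\operatorname{lk}(B_j)$ has degree at least $4$ there, which with (6) gives (5);
\item every $\Gamma$-edge has degree $8$, which with (6) gives (4).
\end{itemize}

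The paper takes a different route. It starts from an arbitrary triangulation, built from a Heegaard splitting and bistellar moves and then made full by subdividing edges and triangles. It then removes violations one at a time by local stellar subdivisions: of the triangles $\{B_j,D_i,D_{i+1}\}$ along a short poisonous circle, of poisonous tetrahedra, and of both triangles of a poisonous pair. Each of these preserves fullness of $t$ and $\Gamma$.

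There are also two smaller problems in Stage~1. First, $\Gamma$ does not lie on the boundary of its regular neighbourhood; use the boundary of a thickened ribbon surface of $\Gamma$ instead. Second, adding vertices inside faces does not remove an edge of $t$ outside $\Gamma$ that joins two $\Gamma$-vertices; such edges must themselves be subdivided.
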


\begin{proof}	We denote the vertices of the bipartite graph~$\Gamma$ by $\{A_i\}_{i=1}^{m}$ and $\{B_j\}_{j=1}^{n}$. Thus, every edge of~$\Gamma$ joins some $A_i$ to some $B_j$; there are no edges among the $A_i$'s, nor among the $B_j$'s. Since $\Gamma$ is a full subgraph of~$\mathcal{T}$, the same edge relations hold in~$\mathcal{T}$.


	We take a  Heegaard splitting $N=H_{+} \cup_{\Sigma_g} H_{-}$ of $N$  with genus $g \geq 1$. Namely $H_{+}$ and $ H_{-}$ are handlebodies with boundary $\Sigma_g$. It is  straightforward to construct a simplicial triangulation $t$ of $\Sigma_g$ such that 	$\Gamma \hookrightarrow \Sigma_g$ is a full subgraph;  for instance, refer to	\cite{CDSStark:2025}. Then we take a simplicial triangulation $\mathcal{T}_{+}$ of $H_{+}$, which induce a  simplicial triangulation of $\partial H_{+}=\Sigma_g$, say $t_+$. 
		It is well-known that any  two triangulations of the surface $\Sigma_g$ are connected by a sequences of   bistellar moves: namely,   2-2 move, 1-3 move and 3-1 move, refer to 
		\cite{Lickorish:1999}.

		We now modify the triangulation $\mathcal{T}_+$ of $H_+$ according to  the sequence of bistellar moves to align it with the triangulation $t$ of $\Sigma_g$:
			\begin{itemize}
			
			\item  Denote $t_1=t_+$, and    assume $t_1$ changes to  a triangulation  $t_2$ of $\Sigma_g$  by a  2-2 move; that is, we assume there are two triangles in $t_1$ with vertex sets $\{a,b,d\}$ and $\{b,c,d\}$,  two triangles in $t_2$ with vertex sets $\{a,b,c\}$ and $\{a,c,d\}$. We add a tetrahedron to  the original triangulation $\mathcal{T}_{+}$  with vertex set $\{a,b,c,d\}$. Topologically, this can be viewed as a boundary connected sum of $H_{+}$ with a 3-ball, the resulting space is also $H_{+}$, we get  a new triangulation of $H_{+}$. We still denote  the new  triangulation of $H_{+}$  by $\mathcal{T}_{+}$,   which induces the triangulation $t_2$ on  $\partial H_+=\Sigma_g$;
			
					\item If $t_1=t_+$ changes to    $t_2$  by a  1-3 move;  that is, we assume there is a  triangle in $t_1$ with vertex set $\{a,b,c\}$, and  there are  three  triangles in $t_2$ with vertex sets $\{a,b,d\}$, $\{b,c,d\}$ and $\{c,a,d\}$.  We add a tetrahedron to the original triangulation $\mathcal{T}_{+}$  with vertex set $\{a,b,c,d\}$, getting a new triangulation of $H_{+}$. We still denote  the new  triangulation of $H_{+}$  by $\mathcal{T}_{+}$,  which induces the  triangulation $t_2$ on $\partial H_+=\Sigma_g$;

							\item The case that    $t_1=t_+$ changes to   $t_2$  by a  3-1 move is similar.
							
				\end{itemize}
				 After finite many steps as above,  we have  a triangulation  $\mathcal{T}_{+}$ of $H_{+}$, such that the induced  triangulation of $\partial H_+=\Sigma_g$ is exactly the triangulation  $t$. Similarly,  we have  a triangulation  $\mathcal{T}_{-}$ of $H_{-}$, such that the induced  triangulation of $\Sigma_g$ is also  exactly the triangulation $t$. The two triangulations fit together to give a triangulation $\mathcal{T}$ of $N$.

		  Now the triangulation $t$ does not need to be a full subcomplex of $\mathcal{T}$:
		  	\begin{itemize}

		  		\item If  there is an edge $[G,I]$ in $\mathcal{T}$ with $G, I$ lie in $t$, but  $[G,I]$ does not lie in $t$. Then we add a new vertex $F$ in the middle of $[G,I]$, subdivide every 3-simplex $\Delta$ in $\mathcal{T}$ with $[G,I]$ as one edge into two smaller 3-simplices; 
		  	
		  	 		\item If  a triangle $\delta$  with vertex  $\{G,I,F\}$ in $\mathcal{T}$,  such that  $[G, I]$, $[I,F]$ and $[F, G]$ lie in $t$, but  $\delta$  does not lie in $t$. Then we add a new vertex $E$ in the center  of $\delta$, subdivide every 3-simplex $\Delta$ in $\mathcal{T}$ with $\delta$ as one face into three smaller 3-simplices. 
		  		\end{itemize}	
	  			After applying the above procedure finitely many times, we obtain a triangulation~$\mathcal{T}$ that satisfies the first condition of Lemma~\ref{lemma:inducedgraph}.
	  		Note that if $\Gamma$ is a full subgraph of a triangulation $t$ of $\Sigma_g$, and if we subdivide the triangulation~$t$ of~$\Sigma_g$ as described above to obtain a new triangulation~$t'$, then $\Gamma$ remains a full subgraph of~$t'$. Consequently, the second condition of Lemma~\ref{lemma:inducedgraph} follows as well.

By conditions (\ref{inducedgraph:1}) and (\ref{inducedgraph:2}), $\Gamma$ is a full subgraph of $\mathcal{T}$. Hence, for each 3-simplex $\Delta$ in $\mathcal{T}$, if $\Gamma \cap \Delta$ is nonempty, then it is either an edge or a vertex.

For  Sparsity I, suppose there exists a   vertex $B_j$ of  $\Gamma$, and a set of  3-simplices  $\Delta_i$ whose  vertex set is  $\{B_j, A_i,D_i,D_{i+1}\}$ for every $1 \leq i \leq k$, where $D_{k+1}=D_1$. Since $\mathcal{T}$ is a simplicial triangulation,  we have  $k=3$ or $4$. 
The union of the 2-simplices with vertex sets $\{B_j,D_i,D_{i+1}\}$  is a  \emph{length-$k$ poisonous circle}.   We then  add a  central vertex $E_i$ in the interior of the triangle  $\{B_i,D_i,D_{i+1}\}$, and subdivide $\Delta_i$ into three smaller tetrahedra with vertex sets $\{A_i,B_j,D_i,E_i\}$, $\{A_i,B_j,E_i,D_{i+1}\}$, and $\{A_i,E_i,D_i,D_{i+1}\}$,  respectively. The adjacent  tetrahedron $\Delta'_i \in \mathcal{T}^{(3)}$ to  $\Delta_i$ is also subdivided into three smaller tetrahedra accordingly. This yields a new triangulation of $N$: we eliminate a poisonous circle from $\mathcal{T}$, and this procedure does not produce any new poisonous circles. Moreover, this procedure does not affect any of the conditions (\ref{inducedgraph:1}), (\ref{inducedgraph:2}), and (\ref{inducedgraph:3}).    After finitely many such operations, we may assume the triangulation  $\mathcal{T}$ satisfies   Sparsity I.


For  Sparsity II, if the tetrahedron $\Delta \in \mathcal{T}^{(3)}$ with vertex set  $\{B_j,G,H,F\}$ intersects $\Gamma$ in a point $B_j$. Moreover, there are at least  two tetrahedra   $\Delta_1, \Delta_2 \in \mathcal{T}^{(3)}$ with vertex  $\{B_j,G,H,A_1\}$ and  $\{B_j,F,H,A_2\}$ such that both edges $[A_1,B_j]$ and $[A_2,B_j]$ lie in $\Gamma$. The tetrahedron   $\Delta$ is a \emph{poisonous tetrahedron}  in the  triangulation $\mathcal{T}$.  
 We may  add a vertex $J$ in the interior of  $\Delta$, and subdivide  $\Delta$ into four smaller 
tetrahedra, getting a new  triangulation $\mathcal{T}'$ of $N$. The triangulation  $\mathcal{T}'$ still  satisfies the first four  conditions in Lemma \ref{lemma:inducedgraph}. 
Now each of the four smaller  
tetrahedra is not a poisonous tetrahedra in $\mathcal{T}'$. After finite steps,  the triangulation satisfies 
 Sparsity II.

For   Sparsity III, suppose  there  are  adjacent triangles $\delta_1$ and $ \delta_2$ in $\mathcal{T}^{(2)}$ with vertex sets $\{A_i,B_j, I\}$ and $\{A_k, B_j, I\}$ such that both $[A_i, B_j]$ and $[A_k, B_j]$  lie in $\Gamma$. The pair   $\{\delta_1,  \delta_2 \}$ is a pair of  \emph{poisonous triangles}.  Suppose $\delta_1$ is the common 2-face of two tetrahedra $\Delta_1, \Delta_2$ with vertex sets $\{A_i, B_j, I, E\}$ and $\{A_k, B_j, I, F\}$,  $\delta_2$ is the common 2-face of two tetrahedra $\Delta_3, \Delta_4$ with vertex sets $\{A_i, B_j, I, G\}$ and $\{A_k,B_j, I, H\}$. We may add a vertex $L$ in the interior of $\delta_1$, and a vertex $J$ in the interior of $\delta_2$, subdivide each of $\delta_1$ and $\delta_2$  into three smaller triangles. In turn, we may subdivide each of $\Delta_1, \Delta_2, \Delta_3 $ and $ \Delta_4$ into three smaller tetrahedra according the subdivisions of $\delta_1$ and $\delta_2$. This procedure gives a new triangulation $\mathcal{T}'$ of $N$, it still satisfies conditions in Lemma \ref{lemma:inducedgraph}, except  possibly  the condition  Sparsity III.  
But in this procedure, we kill off a pair of poisonous triangles, and we do not add any more poisonous triangles (and do not add any more poisonous tetrahedra). Thus, after finitely many steps, we obtain a triangulation as in Lemma \ref{lemma:inducedgraph}.

	 \end{proof}

	 \section{Almost flag-no-square triangulations of a 3-ball}   \label{section:almostfns3ball}

In this section, we prepare some $3$-complexes, say $\mathcal{C}_{543}$, $\mathcal{C}'_{543}$, $\mathcal{C}_{1137}$, and $\mathcal{C}_{1163}$, which are  ``almost flag-no-square"  triangulations of a 3-ball.  
In Section~\ref{section:almostfns}, we use these $3$-complexes to transform the triangulation~$\mathcal{T}$ from Lemma~\ref{lemma:inducedgraph} into an ``almost flag-no-square'' triangulation~$\mathcal{T}^*$ of~$N$, as stated in Theorem~\ref{thm:almostfns}. Although the constructions of these $3$-complexes are not difficult, their finer properties are delicate to establish; as a result, this section is the longest in the present paper.



	
	\textbf{The 3-complex $\mathcal{C}_{600}$.}	
		Consider the 600-cell \cite{PrzytyckiS:2009, SchleimerSegerman:15, Ma-Zheng:2021}, which  is the convex regular 4-polytope with Schl\"afli symbol $\{3,3,5\}$. The 120-cell is the convex regular 4-polytope with Schl\"afli symbol $\{5,3,3\}$. 
		Denote by $\mathcal{C}_{600}$ the boundary of the 600-cell, and  by $\mathcal{Y}_{120}$ the boundary of the 120-cell. Then  $\mathcal{C}_{600}$ is a triangulation of the 3-sphere, with  $\mathcal{Y}_{120}$  as the dual.  $\mathcal{Y}_{120}$ consists of 120 icosahedra, 720 pentagons, 1200 edges  and 600 vertices.  
		 $\mathcal{C}_{600}$ consists of 600 tetrahedrons, 1200 triangles, 720  edges and 120 vertices: 
		\begin{itemize}
		\item   The vertex links of $\mathcal{C}_{600}$  are icosahedra; 
			\item The edge links of  $\mathcal{C}_{600}$  are pentagons;
			\item Every vertex in $\mathcal{C}_{600}$ is a vertex of 20 tetrahedra.
				\end{itemize}

		See Figure 	\ref{figure:120} for the boundary of the 120-cell. 	It is well-known that the 120-cell can be realized as  a  compact right-angled hyperbolic polytope in $\mathbb{H}^{4}$, in turn this implies  $\mathcal{C}_{600}$ is a   flag-no-square triangulation of the 3-sphere.  $\mathcal{C}_{600}$ is important in our proof of Theorems \ref{thm:conformaldim}   and \ref{thm:infiniteqi}, so we recall some feathers of $\mathcal{C}_{600}$ from \cite{PrzytyckiS:2009}. 
			
					\begin{figure}
				\begin{center}
					\begin{tikzpicture}
					\node at (0,0) {\includegraphics[width=7.5cm,height=7.5cm]{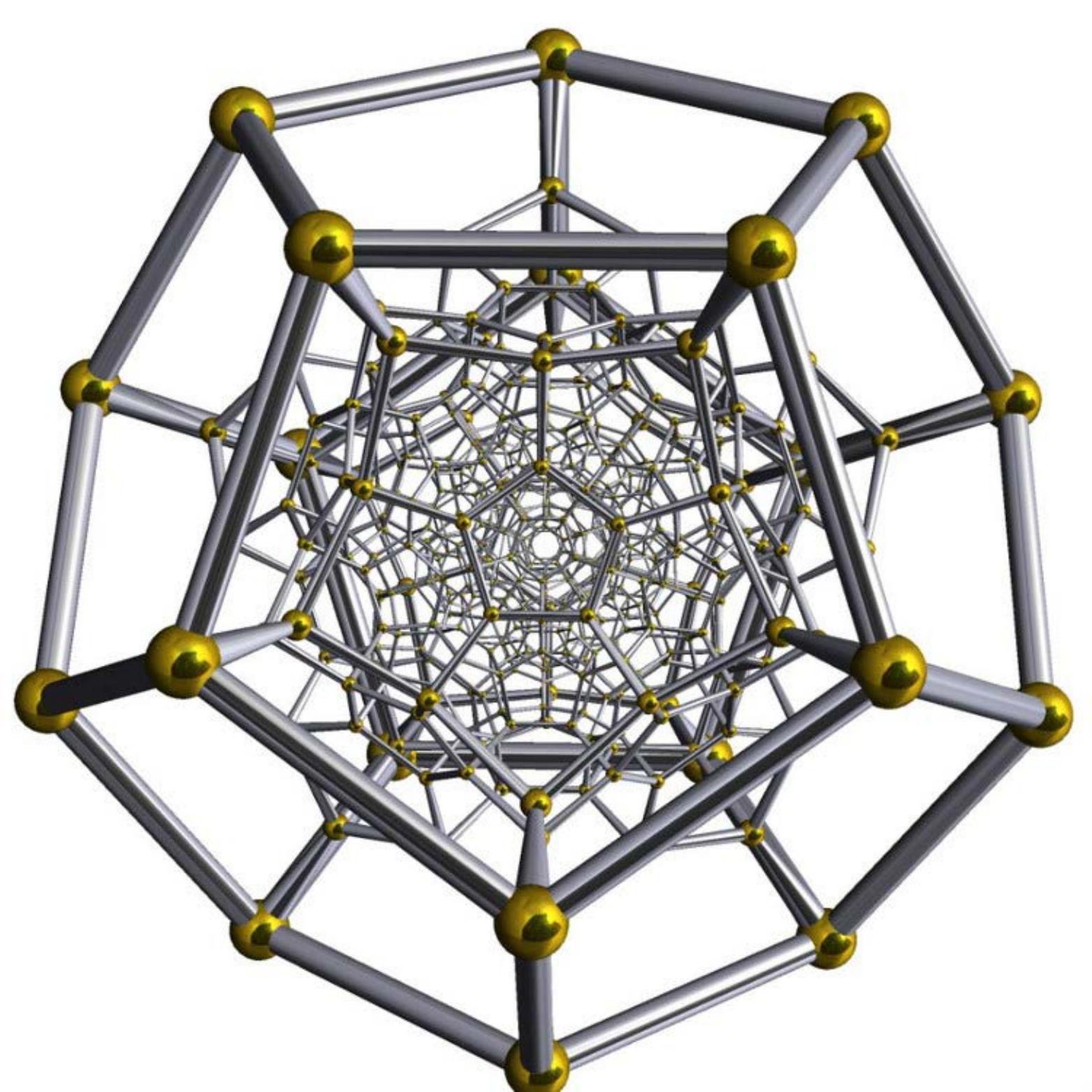}};

					\end{tikzpicture}
				\end{center}
				\caption{Facets boundary  of the 120-cell; figure taken from the web. }
				\label{figure:120}
			\end{figure}

				\subsection{The 3-complex $\mathcal{C}_{543}$}  \label{subsection:c543} We now describe a $3$-complex, denoted by $\mathcal{C}_{543}$, which is a triangulation of a $3$-ball \cite{PrzytyckiS:2009}.

		\textbf{The 3-complex $\mathcal{C}_{17}$.} 
		Consider a tetrahedron $\Delta_0$ with four vertices $\{v_1, v_2, v_3, v_4\}$ lying in the 3-sphere. We add four vertices $a_{i}$ for $i  \in \{1,2,3,4\}$ outside $\Delta_0$.   
			Glue the tetrahedron $\Delta_1$ with  vertex set  $\{a_1, v_2, v_3, v_4\}$ to  $\Delta_0$ along the triangle with vertex set $\{v_2, v_3, v_4\}$. Similarly, glue the tetrahedra  $\Delta_i$  to  $\Delta_0$ such that $a_i$ is a vertex of $\Delta_i$  for $i=2,3,4$.	
		We then add six vertices $a_{ij}=a_{ji}$ for $i \neq j \in \{1,2,3,4\}$ outside $\cup^{5}_{i=0} \Delta_{i}$. Now glue twelve tetrahedra $\Delta_{ij}$ for $i \neq j \in \{1,2,3,4\}$ to $\cup^{5}_{i=0} \Delta_{i}$. For example, $\Delta_{12}$ has vertices $\{v_3,v_4,a_1,  a_{12}\}$, and  the tetrahedron  $\Delta_{21}$ has vertices $\{v_3,v_4, a_2, a_{21}\}$. In particular, $\Delta_{ij}$ and  $\Delta_{ji}$ share the triangle with vertex  set  $\{v_k,v_l,  a_{ij}\}$ where $\{i,j,k,l\}=\{1,2,3,4\}$. The union of these seventeen tetrahedra forms a triangulation of a 3-ball; we denote this simplicial complex by  $\mathcal{C}_{17}$. The boundary of  $\mathcal{C}_{17}$ is shown in Figure \ref{figure:C17}, which is a triangulation of the 2-sphere.

	\begin{figure}
		\begin{center}
			\begin{tikzpicture}
			\node at (0,0) {\includegraphics[width=11cm,height=9cm]{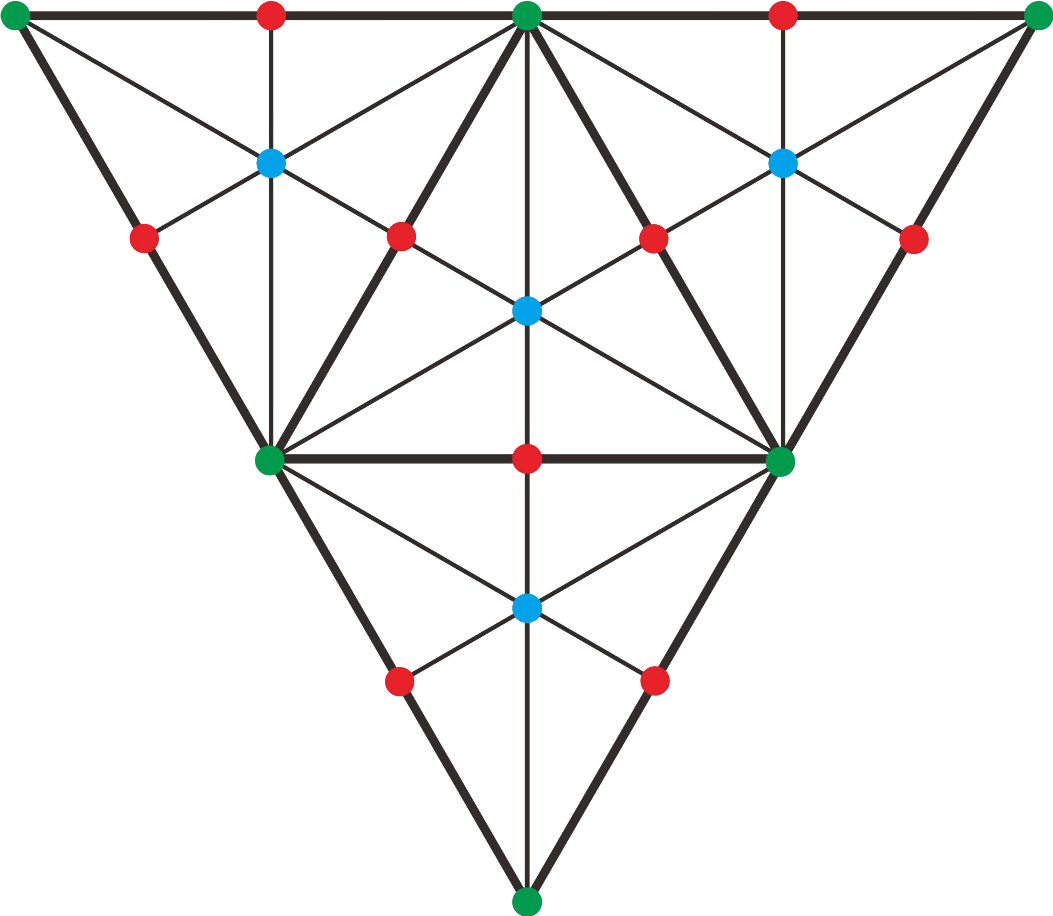}};

			\node at (0.6,1.4){ $v_{1}$};
			
			\node at (0.6,-1.5){ $v_{2}$};
			
			\node at (3.4,2.9){ $v_{3}$};
			
			\node at (-3.4,2.9){ $v_{4}$};

			\node at (0.1,4.9){\Large $a_{2}$};
			\node at (-3.2,-0.2){\Large $a_{3}$};
			\node at (3.2,-0.2){\Large $a_{4}$};
			
			\node at (5.3,4.9){\Large $a_{1}$};
			\node at (-5.3,4.9){\Large $a_{1}$};
			\node at (0.5,-4.7){\Large $a_{1}$};
			
			\node at (-3.0,4.9){\Large $a_{12}$};
			\node at (3.0,4.9){\Large $a_{12}$};

			\node at (4.8,2.0){\Large $a_{14}$};
			\node at (1.85,-2.4){\Large $a_{14}$};
			
			\node at (-4.8,2.0){\Large $a_{13}$};
			\node at (-1.85,-2.4){\Large $a_{13}$};

			\end{tikzpicture}
		\end{center}
		\caption{By gluing edges in this figure that have identical pairs of labeled vertices,  we obtain a triangulation of the 2-sphere, which is the  boundary of the 3-complex $\mathcal{C}_{17}$. We omit the labels of vertices $a_{ij}$ for $i \neq j \in \{2,3,4\}$. }
		\label{figure:C17}
	\end{figure}

	\begin{figure}
		\begin{center}
			\begin{tikzpicture}
			\node at (0,0) {\includegraphics[width=9cm,height=7.5cm]{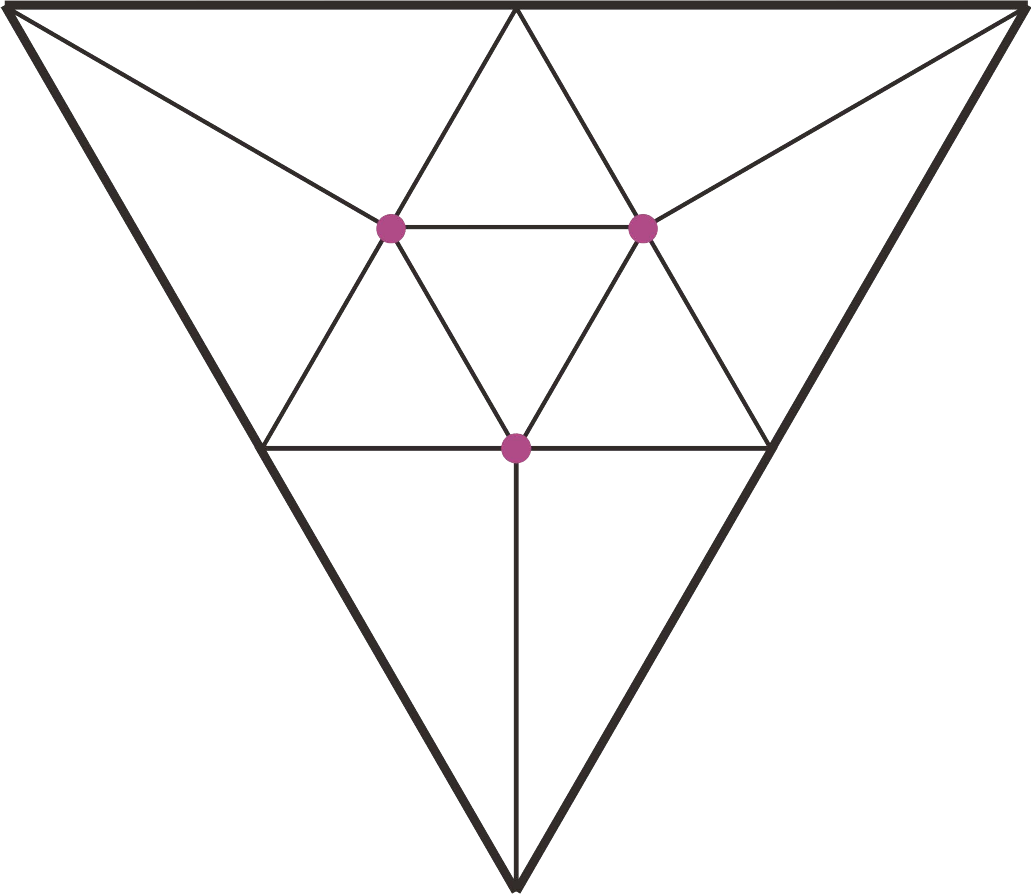}};

			\node at (-4.9,3.7){\Large $a_{1}$};
			
			\node at (4.9,3.7){\Large $a_{2}$};
			
			\node at (0.3,4.0){\Large $a_{12}$};

			\node at (0.7,-3.8){\Large $a_{3}$};

			\node at (-3.6,-0.2){\Large $a_{13}$};
			\node at (3.6,-0.2){\Large $a_{23}$};

			\node at (-1.85,1.7){\small $w_{1,23}$};

			\node at (1.85,1.7){\small $w_{2,13}$};
			
			\node at (0.85,-0.5){\small $w_{3,12}$};
			
			\end{tikzpicture}
		\end{center}
		\caption{The  triangulation $c_{10}$ of the 2-disk. The three purple vertices $w_{1,23}, w_{2,13}, w_{3,12}$ are new vertices in the boundary of the 3-complex $\mathcal{C}_{57}$. }
		\label{figure:c10}
	\end{figure}


		\textbf{The 3-complex $\mathcal{C}_{57}$.} 
			  There is a triangulation $c_{10}$ of a 2-disk in Figure 	\ref{figure:c10}, consisting of ten triangles, with labels at vertices. The triangulation $c_{10}$ is called a \emph{special subdivision}  of the triangle with vertex set $\{a_1,a_2,a_3\}$, see 	\cite{Dranishnikov:1999, PrzytyckiS:2009}.  There are three vertices in the interior of  $c_{10}$, say $$w_{1,23}=w_{1,32}, ~~ w_{2,13}=w_{2,31}, ~~ w_{3,12}=w_{3,21}.$$
	Take the cone to the vertex $v_4$ in  $\mathcal{C}_{17}$ base on  $c_{10}$, so this cone consists of     ten  tetrahedra, each tetrahedron corresponds to a triangle in $c_{10}$. The side surface of the cone is a triangulation $c_6$ of a 2-disk, see Figure \ref{figure:c6c4}. 
	Glue the cone to the 3-complex  $\mathcal{C}_{17}$ along the the side surface of the cone, which is a disk and it is one-fourth  of the boundary of  $\mathcal{C}_{17}$ as showed  in   Figure 	\ref{figure:C17}.
		Similarly, we glue the other three cones to one fourth of the boundary of the 3-complex $\mathcal{C}_{17}$ around the vertices $v_1$, $v_2$, and $v_3$, respectively. Each cone is based on a special subdivision of a triangle, and the vertex sets of the three triangles are $\{a_2,a_3,a_4\}$, $\{a_1,a_3,a_4\}$, and $\{a_1,a_2,a_4\}$, respectively.
	The union of these fifty-seven tetrahedra  now is a triangulation of a 3-ball, we denote this 3-complex  by $\mathcal{C}_{57}$. The boundary of $\mathcal{C}_{57}$ is illustrated in Figure \ref{figure:C57}.
	
	 	Now $\mathcal{C}_{17}$ and  $\mathcal{C}_{57}$ are subcomplexes of  $\mathcal{C}_{600}$. The (closed) complement of  $\mathcal{C}_{57}$ in $\mathcal{C}_{600}$ is denoted by  $\mathcal{C}_{543}$, consisting of $543$ tetrahedra. The 3-complex  $\mathcal{C}_{543}$  is a full subcomplex of $\mathcal{C}_{600}$, which is called a \emph{special subdivision}  of the tetrahedron with vertex set $\{a_1,a_2,a_3,a_4\}$, see 	\cite{ PrzytyckiS:2009}.
	 	Moreover, $a_1,a_2,a_3$ and $a_4$ serve as the \emph{genuine vertices} of $\mathcal{C}_{543}$.

	\begin{figure}
	\begin{center}
		\begin{tikzpicture}
		\node at (0,0) {\includegraphics[width=11cm,height=3.9cm]{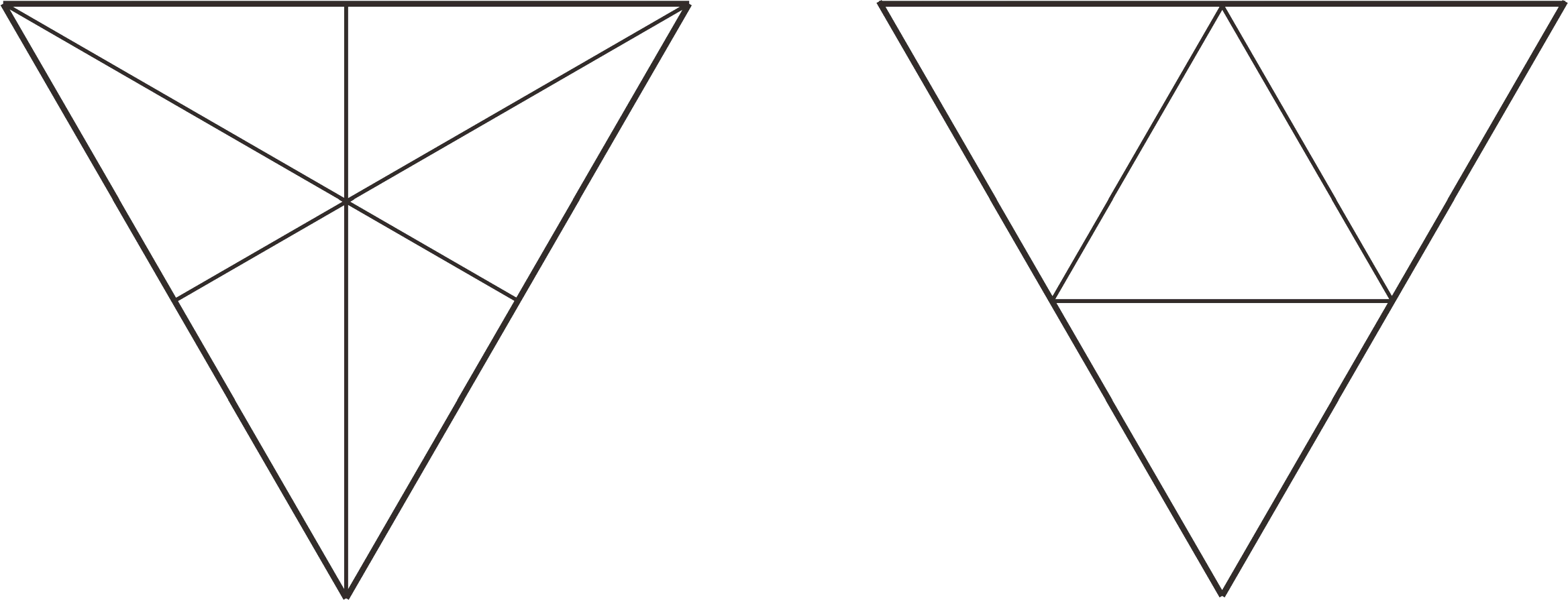}};

		\node at (5.0,2.3){\Large $a_{2}$};
		
		\node at (0.7,2.3){\Large $a_{3}$};
		
		\node at (3.9,-2.4){\Large $a_{4}$};
		
		\node at (2.8,2.4){\Large $a_{23}$};
		\node at (4.9,0.0){\Large $a_{24}$};
		\node at (1.4,0.0){\Large $a_{34}$};

		\end{tikzpicture}
	\end{center}
	\caption{The left subfigure is the triangulation  $c_6$ of a 2-disk. The right subfigure is the triangulation  $c_4$ of a 2-disk.}
	\label{figure:c6c4}
\end{figure}

	 The following is proved in  \cite{PrzytyckiS:2009}:
	 \begin{lemma} \label{lemma:fnsC543}  For subcomplexes $\mathcal{C}_{57}$ and  $\mathcal{C}_{543}$ in  $\mathcal{C}_{600}$:
	 	
	 		\begin{itemize}
	 			
	 			\item 	The 3-complex  $\mathcal{C}_{543}$  is a  flag-no-square triangulation of the 3-ball;
	 			
	 				\item $$\mathcal{C}_{57}=\text{span} \{u \in \mathcal{C}_{600}^{(0)}: \text{dist}(u,\Delta_0) \leq 1 \}.$$

	 			\end{itemize}

	 	\end{lemma}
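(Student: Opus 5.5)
The plan is to take $\mathcal{C}_{600}$ as the ambient object and derive both bullets from two facts: $\mathcal{C}_{600}$ is flag-no-square, and $\mathcal{C}_{543}$ is a full subcomplex of it. Flagness and the absence of empty squares both pass to full subcomplexes. I would therefore prove the second bullet first, since it identifies $\mathcal{C}_{57}$ as a full subcomplex with a metric description. From that description, fullness of the complement $\mathcal{C}_{543}$ and then its flag-no-square property should follow.

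\textbf{Step 1 (description of $\mathcal{C}_{57}$).} Using the coordinates of the 600-cell, the link of $\Delta_0$ is organized as follows. The vertices at distance exactly $1$ from $\Delta_0$ are:
\begin{itemize}
\item the $a_i$, which are the apexes of the tetrahedra $\Delta_i$ sharing a face with $\Delta_0$;
\item the $a_{ij}$;
\item the interior vertices $w_{i,jk}$ of the special subdivisions.
\end{itemize}
To check this list, note that every vertex adjacent to some $v_l$ lies in the icosahedral link of $v_l$. That link is a $12$-vertex icosahedron containing the three other $v$'s, three $a$'s, three $a_{..}$'s and three $w$'s. Counting gives $4+4+6+12=26$ vertices. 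I would verify that the $57$ tetrahedra listed in the construction are exactly the tetrahedra of $\mathcal{C}_{600}$ spanned by these $26$ vertices. Every tetrahedron containing some $v_l$ lies in the star of $v_l$, which consists of $20$ tetrahedra. The remaining tetrahedra of $\mathcal{C}_{57}$, namely $\Delta_{ij}$ and the cones, each contain some $v_l$. So it suffices to show that no tetrahedron avoiding all the $v_l$ has all four vertices among the $a,a_{..},w$. This reduces to a finite check in the icosahedra.

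\textbf{Step 2 (complement is a ball and full).} The boundary sphere $\partial\mathcal{C}_{57}$ is a $2$-sphere in $\mathcal{C}_{600}\cong S^3$. It is tamely simplicial, so the complement is a $3$-ball. Fullness of $\mathcal{C}_{543}$ means the following: if a simplex of $\mathcal{C}_{600}$ has all its vertices in $\mathcal{C}_{543}$, then it is not interior to $\mathcal{C}_{57}$. Here the vertices of $\mathcal{C}_{543}$ are those outside $\text{span}\{v_l\}$ together with the boundary vertices of $\mathcal{C}_{57}$. Interior simplices of $\mathcal{C}_{57}$ all contain some $v_l$, since every one of the $57$ tetrahedra does and the interior faces are the faces of $\Delta_0$, the $\Delta_i$ and the cones through the $v_l$. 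Hence fullness reduces to this: a simplex of $\mathcal{C}_{600}$ whose vertices all lie on $\partial\mathcal{C}_{57}$ must lie in $\partial\mathcal{C}_{57}$ or in $\mathcal{C}_{543}$. That is again a finite verification, carried out by inspecting Figure~\ref{figure:C57}.

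\textbf{Step 3 (flag-no-square).} Let $\mathcal{C}_{543}$ be full in $\mathcal{C}_{600}$. Any clique in its $1$-skeleton spans a simplex of $\mathcal{C}_{600}$ by flagness, and by fullness that simplex lies in $\mathcal{C}_{543}$. Any empty square in $\mathcal{C}_{543}$ would have no diagonal in $\mathcal{C}_{600}$, since fullness forbids extra edges. It would then be an empty square in $\mathcal{C}_{600}$, contradicting that the 120-cell is right-angled hyperbolic, which rules out empty squares in its dual nerve.

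\textbf{Main obstacle.} The main obstacle is the combinatorial verification in Steps 1–2 that no unexpected edges or simplices occur among the $26$ vertices. I would handle it by working with the $H_4$ coordinates, in which edges join vertices at angle $\pi/5$. Alternatively, since this is precisely Przytycki–Świątkowski's lemma, I would cite \cite{PrzytyckiS:2009} for the finite check.
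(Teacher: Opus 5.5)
Your argument for the first bullet is sound, and it does not actually depend on the second. You observe that every simplex of $\mathcal{C}_{57}$ outside $\partial\mathcal{C}_{57}$ contains some $v_l$. It follows that $\mathcal{C}_{543}$ is just the full subcomplex of $\mathcal{C}_{600}$ on $\mathcal{C}_{600}^{(0)}\setminus\{v_1,\dots,v_4\}$. So the ``finite verification'' at the end of your Step 2 is vacuous, and Step 3 together with Alexander's theorem finishes the first bullet. (The paper gives no argument of its own here; it simply cites \cite{PrzytyckiS:2009}.)

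The gap is in the second bullet. Since $\text{span}$ means the full subcomplex, you must exclude \emph{every} simplex of $\mathcal{C}_{600}$ (edge, triangle or tetrahedron) that is not in $\partial\mathcal{C}_{57}$ but has all its vertices among the $22$ vertices $a_i,a_{ij},w_{i,jk}$ of $\partial\mathcal{C}_{57}$. Equivalently, you need $\partial\mathcal{C}_{543}$ to be full in $\mathcal{C}_{543}$. This is the reverse of the fullness you prove in Step 2, and the paper later cites it as Lemma 2.10 of \cite{PrzytyckiS:2009}.

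Your reduction to tetrahedra does not give this. A hypothetical interior edge of $\mathcal{C}_{543}$ joining, say, $a_{12}$ and $a_{34}$ would enlarge the span without creating any tetrahedron on the $26$ vertices. Nor can the check be done ``in the icosahedra'' as stated, because simplices avoiding every $v_l$ do not appear in the links of the $v_l$.

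The repair uses only what Step 3 already assumes.
\begin{itemize}
\item First record that $\mathcal{C}_{57}$ is exactly the union of the closed stars of $v_1,\dots,v_4$; inclusion--exclusion gives $4\cdot 20-6\cdot 5+4\cdot 2-1=57$.
\item Now let $[x,y]$ be an edge of $\mathcal{C}_{600}$ with $x,y\notin\{v_1,\dots,v_4\}$, $x\sim v_l$ and $y\sim v_m$. If $x$ and $y$ have a common neighbour $v\in\{v_l,v_m\}$, flagness puts $[x,y]$ in a triangle through $v$, hence in $\mathcal{C}_{57}$. Otherwise $l\neq m$, and $(x,v_l,v_m,y,x)$ is an empty square of $\mathcal{C}_{600}$, which is impossible.
\item Hence every edge between boundary vertices lies in $\mathcal{C}_{57}$, and since it avoids the $v_l$, it lies in $\partial\mathcal{C}_{57}$.
\item For triangles you also need that $\partial\mathcal{C}_{57}$ (four copies of $c_{10}$, Figure~\ref{figure:C57}) is flag, which is a genuine finite inspection. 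Then a $3$-clique of boundary vertices spans a triangle of $\partial\mathcal{C}_{57}$. By simpliciality, this is the only triangle of $\mathcal{C}_{600}$ on those vertices, and $4$-cliques cannot occur.
\end{itemize}
With this argument replacing your tetrahedron-only reduction in Step 1, the plan proves the lemma.
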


 		We view the boundary of $\mathcal{C}_{543}$ as four parts, each of which is isomorphic to the 2-complex $c_{10}$; that is, each part is a special subdivision of a 2-face of the tetrahedron, see Figure \ref{figure:C57}. Moreover, the following lemma will be used in the proof of Lemma \ref{lemma:squareC1133}.

 		\begin{lemma} \label{lemma:interiorC543} 
 			
 			Consider the 2-complex $c_{10}$ as  one-fourth   of $\partial \mathcal{C}_{543}$, which is a special subdivision of the triangle with vertex set $\{a_2,a_3,a_4\}$. 
 			There does not exist a triple of vertices $\{E,G,F\}$  in $\mathcal{C}_{543}$ such that $G$ and $F$ lie in the 2-complex $c_{10}$, while edges  $[E,G]$ and $[E,F]$  are present in $\mathcal{C}_{543}$ but edge $[G,F]$ is not,	if:
 			 \begin{enumerate} 
 			\item \label{interiorC543:1} Either the vertex $E$ lies in the interior of $\mathcal{C}_{543}$;
 			
 			\item  \label{interiorC543:2}  Or the vertex $E$ lies in the interior of $\partial \mathcal{C}_{543} \setminus  c_{10}$.
 			 \end{enumerate}

 		\end{lemma}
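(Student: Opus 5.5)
The plan is to reduce both cases to metric facts in $\mathcal{C}_{600}$, using that $\mathcal{C}_{543}$ and $c_{10}$ are full subcomplexes and that $\mathcal{C}_{600}$ is flag-no-square. Assume $E,G,F$ are as in the statement, with $G,F\in c_{10}$, $[E,G],[E,F]\in\mathcal{C}_{543}$ and $[G,F]\notin\mathcal{C}_{543}$. Since $\mathcal{C}_{543}$ is full in $\mathcal{C}_{600}$, $[G,F]$ is not an edge of $\mathcal{C}_{600}$ either, so $G,E,F$ is a path of length $2$ with $\mathrm{dist}(G,F)=2$ in the $1$-skeleton of $\mathcal{C}_{600}$. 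The face $c_{10}$ is glued to the cone on $c_{10}$ with apex $v_1$ (the cone used to build $\mathcal{C}_{57}$ around the face $\{a_2,a_3,a_4\}$). Hence every vertex of $c_{10}$ is adjacent to $v_1$, so $v_1$ is a common neighbour of $G$ and $F$.

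\textbf{The square.} In $\mathcal{C}_{600}$ the vertices $G,E,F,v_1$ form a $4$-cycle. Indeed $E\neq v_1$, because $E\in\mathcal{C}_{543}$ and $v_1\notin\mathcal{C}_{543}$. The diagonal $[G,F]$ is absent. Since $\mathcal{C}_{600}$ has no empty squares, the other diagonal $[E,v_1]$ must be an edge, so $\mathrm{dist}(E,\Delta_0)\le 1$. By Lemma~\ref{lemma:fnsC543}, $E\in\mathcal{C}_{57}$. As $E$ also lies in $\mathcal{C}_{543}$, it lies in $\mathcal{C}_{57}\cap\mathcal{C}_{543}=\partial\mathcal{C}_{543}$. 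This already contradicts case (\ref{interiorC543:1}).

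\textbf{Locating $E$ on the boundary.} For case (\ref{interiorC543:2}), we now know $E\in\partial\mathcal{C}_{543}$ and $E\sim v_1$. The key step is to show that the vertices of $\partial\mathcal{C}_{57}=\partial\mathcal{C}_{543}$ adjacent to $v_1$ are exactly the vertices of $c_{10}$. The neighbours of $v_1$ in $\mathcal{C}_{600}$ are the $12$ vertices of its icosahedral link. By construction these are $v_2,v_3,v_4$, $a_1$, the vertices $a_2,a_3,a_4$, $a_{23},a_{24},a_{34}$, and the three interior vertices $w_{1,jk}$ of the special subdivision. Twelve vertices are listed, so the list is complete. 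Of these, $v_2,v_3,v_4,a_1$ are interior to $\mathcal{C}_{57}$ (they lie in $\mathcal{C}_{17}$ away from the boundary), and the remaining nine are precisely the vertices of $c_{10}$. Hence $E\in c_{10}$, which contradicts case (\ref{interiorC543:2}).

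The main obstacle is this link count: one must check from the construction of $\mathcal{C}_{17}$ and the cones that $\mathrm{lk}(v_1)$ has exactly these $12$ vertices, and that $a_1$ and the $v_i$ are not on $\partial\mathcal{C}_{57}$. I would verify this directly from the gluing description and Figures~\ref{figure:C17} and~\ref{figure:C57}, counting tetrahedra at $v_1$ to confirm there are $20$. If the count is off, an alternative is to note that $\mathrm{lk}(v_1)$ is an icosahedron containing the $9$-vertex disk $c_{10}$, whose complement in the link is a disk on the $3$ vertices $v_2,v_3,v_4$, which are not boundary vertices; then apply the same argument to $a_1$.
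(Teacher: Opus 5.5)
Your strategy is the paper's: build the $4$-cycle $(E,G,v_1,F,E)$ in $\mathcal{C}_{600}$; use fullness of $\mathcal{C}_{543}$ to exclude $[G,F]$; use no-square to force $[E,v_1]$; use Lemma~\ref{lemma:fnsC543} to get $E\in\mathcal{C}_{57}$; then inspect the neighbours of $v_1$. Your remark that case (1) already fails once $E\in\mathcal{C}_{57}\cap\mathcal{C}_{543}=\partial\mathcal{C}_{543}$ is a small streamlining.

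However, the link computation, which you rightly call the key step, is wrong as written.
\begin{enumerate}
\item The vertex $a_1$ is not adjacent to $v_1$. The vertices $v_1$ and $a_1$ are the two apexes over the common face $\{v_2,v_3,v_4\}$ of $\Delta_0$ and $\Delta_1$. An edge $[v_1,a_1]$ would create a $5$-clique $\{v_1,a_1,v_2,v_3,v_4\}$ in the flag $3$-complex $\mathcal{C}_{600}$.
\item Your list has $13$ entries, not $12$, so the inference ``twelve are listed, hence complete'' does not hold.
\item The interior vertices of the special subdivision of $\{a_2,a_3,a_4\}$ are $w_{2,34},w_{3,24},w_{4,23}$, not $w_{1,jk}$.
\item Most importantly, $a_1$ is a genuine vertex of $\mathcal{C}_{543}$. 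It therefore lies on $\partial\mathcal{C}_{57}=\partial\mathcal{C}_{543}$, not in the interior of $\mathcal{C}_{57}$. If $a_1$ really were a neighbour of $v_1$, your argument would not exclude $E=a_1$ in case (2), since $a_1\in\partial\mathcal{C}_{543}\setminus c_{10}$.
\end{enumerate}

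The correct fact, which the paper uses, is that the $12$ neighbours of $v_1$ are $v_2,v_3,v_4$ together with the nine vertices $a_2,a_3,a_4,a_{23},a_{24},a_{34},w_{2,34},w_{3,24},w_{4,23}$ of $c_{10}$. You can confirm this by listing the $20$ tetrahedra at $v_1$:
\begin{itemize}
\item $\Delta_0$;
\item $\Delta_2,\Delta_3,\Delta_4$;
\item the six $\Delta_{ij}$ with $i,j\in\{2,3,4\}$;
\item the ten tetrahedra of the cone over $c_{10}$.
\end{itemize}
Your fallback argument already gives this cleanly. The icosahedral link has $12$ vertices, nine of them lie in $c_{10}$, and $v_2,v_3,v_4$ are three further neighbours. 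So these are exactly the remaining ones, and they are interior to $\mathcal{C}_{57}$, hence not in $\mathcal{C}_{543}$. Drop the remark ``apply the same argument to $a_1$''. With this correction your proof coincides with the paper's.
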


 	\begin{proof} 
 		Otherwise, consider the cone $\mathcal{C}_{10}$ over $c_{10}$ with vertex $v_1$ in $\mathcal{C}_{57}$. Since $\mathcal{C}_{600}=\mathcal{C}_{543} \cup \mathcal{C}_{57}$, there exists a  4-circle $(E, G, v_1,F, E)$ in $\mathcal{C}_{600}$. Note that $\mathcal{C}_{543}$ is a full subcomplex of $\mathcal{C}_{600}$, so there is no edge $[G,F]$ in $\mathcal{C}_{600}$, and the edge $[v_1,E]$  must be presented    in $\mathcal{C}_{600}$. Thus  $E$ is a vertex of $\mathcal{C}_{57}$  by Lemma \ref{lemma:fnsC543}. Any vertex adjacent to $v_1$ is either one of $\{v_2,v_3,v_4\}$ or a vertex in $c_{10}$, where $c_{10}$ is viewed as a special subdivision of the triangle with vertex set $\{a_2,a_3,a_4\}$.
 	 However, $E$ cannot be one of  $v_2,v_3,v_4$ in $\Delta_0$, since these vertices are not contained in $\mathcal{C}_{543}$; nor can $E$  lie in $c_{10}$, which again contradicts our assumption.  This ends the proof of Lemma \ref{lemma:interiorC543}.


 			\end{proof}
 		
 	\begin{figure}
 		\begin{center}
 			\begin{tikzpicture}
 			\node at (0,0) {\includegraphics[width=11cm,height=9cm]{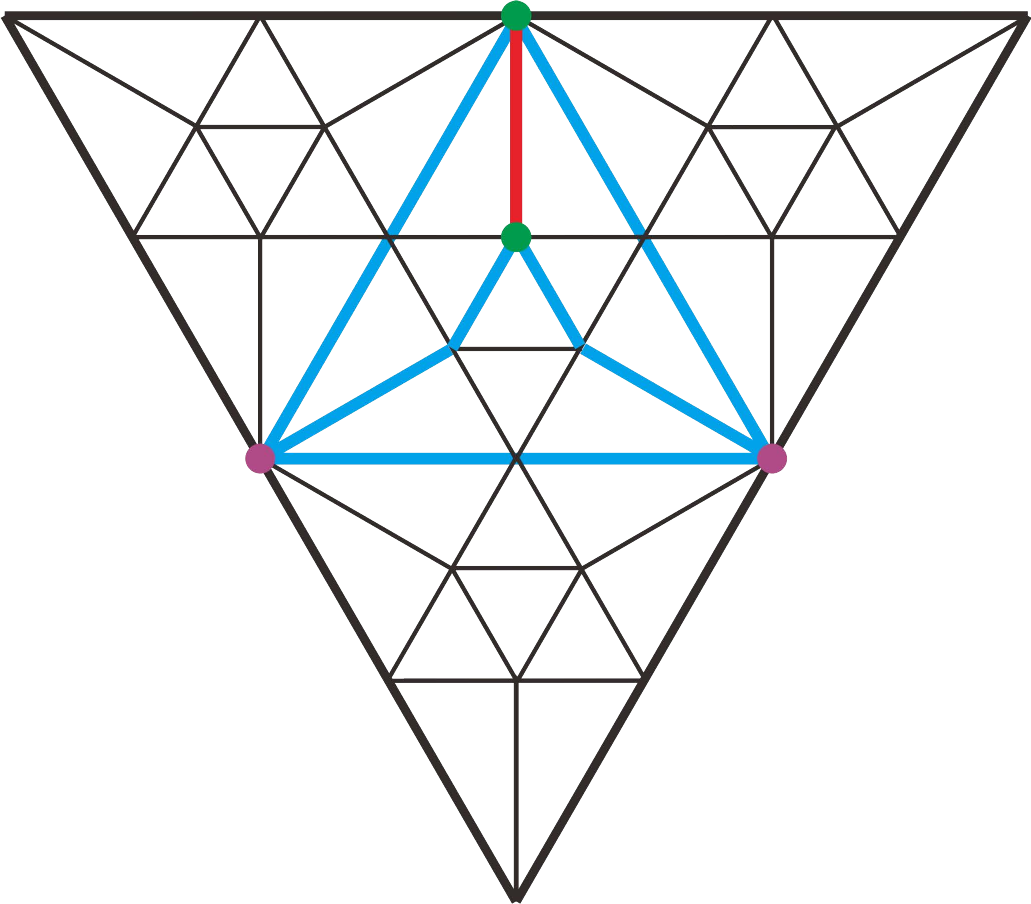}};

 			\node at (0.0,1.6){\small $B$};

 			\node at (0.1,4.9){\Large $A=a_2$};
 			\node at (-3.9,-0.2){\Large $D=a_3$};
 			\node at (3.9,-0.2){\Large $C=a_4$};
 			
 			\node at (5.3,4.9){\Large $a_{1}$};
 			\node at (-5.3,4.9){\Large $a_{1}$};
 			\node at (0.5,-4.7){\Large $a_{1}$};
 			
 			\node at (-3.0,4.9){\Large $a_{12}$};
 			\node at (3.0,4.9){\Large $a_{12}$};

 			\node at (4.8,2.0){\Large $a_{14}$};
 			\node at (1.85,-2.4){\Large $a_{14}$};
 			
 			\node at (-4.8,2.0){\Large $a_{13}$};
 			\node at (-1.85,-2.4){\Large $a_{13}$};

 			\end{tikzpicture}
 		\end{center}
 		\caption{By gluing edges in this figure that have identical pairs of labeled vertices, we obtain a triangulation of the 2-sphere. This 2-sphere is the boundary of the 3-complex $\mathcal{C}_{57}$, and also the boundary of the 3-complexes  $\mathcal{C}_{543}$ and  $\mathcal{C}'_{543}$. Compared with the labels in Figure \ref{figure:c10}, the figure contains twelve unlabeled vertices, denoted $w_{i,jk}=w_{i,kj}$ for $i \neq j \neq k \in \{1,2,3,4\}$, with the exception of the vertex $B=w_{2,34}=w_{2,43}$. }

 		\label{figure:C57}
 	\end{figure}

 		\subsection{The 3-complex $\mathcal{C}'_{543}$}  \label{subsection:cprime543}


 	This is  one of the  key objects in our proof of Theorems \ref{thm:almostfns}   and  \ref{thm:conformaldim}. 
 	Consider the 3-complex $\mathcal{C}_{543}$, but we view it as another subdivision $\mathcal{C}_{543}'$ of the tetrahedron with vertex set $\{A,B,C,D\}$. Namely, we have a tetrahedron $\Delta$ whose edges are shown in thick red and thick cyan in Figure \ref{figure:C57}. Moreover, the boundary of $\mathcal{C}_{543}'$ is a certain subdivision of $\partial \Delta$, as shown in Figure \ref{figure:C57}.
 In other words, the red and cyan arcs in Figure \ref{figure:C57} separate the boundary of $\mathcal{C}_{543}'$ into four parts:
 	\begin{itemize}
 		
 		\item 
 		The triangle with vertex set $\{A,B,D\}$ consists of three smaller triangles. We denote this triangulation of a 2-disk by $c_3$;
 		
 		\item 
 		The triangle with vertex set $\{A,B,C\}$ consists of three smaller triangles. It is also  isomorphic to the 2-complex  $c_3$;

 		\item 
 		The triangle with vertex set $\{B,C,D\}$ consists of four smaller triangles. We denote this triangulation of 2-disk by $c_4$, see Figure \ref{figure:c6c4};
 		
 		\item 
 		The triangle with vertex set $\{A,C,D\}$  consists of $30$  smaller triangles. We denote this triangulation of 2-disk by $c_{30}$, see Figure  	\ref{figure:c30}.

 	\end{itemize}

 		Moreover, $A, B, C$ and $D$ serve as the \emph{genuine vertices} of $\mathcal{C}'_{543}$.
 	As $\mathcal{C}_{543}$ is a flag-no-square triangulation of the 3-ball, the 3-complex $\mathcal{C}'_{543}$ is isomorphic to $\mathcal{C}_{543}$ as a 3-complex, we have
 	
 	\begin{lemma} \label{lemma:fnsCprime543}  The 3-complex  $\mathcal{C}'_{543}$  is a   flag-no-square triangulation of the 3-ball.
 	\end{lemma}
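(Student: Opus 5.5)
The plan is to reduce Lemma \ref{lemma:fnsCprime543} directly to Lemma \ref{lemma:fnsC543}. Both flagness and the absence of empty squares are properties of the abstract simplicial complex alone. They depend only on the vertex set, which pairs span edges, and which vertex sets span simplices. They do not depend on which vertices are designated as genuine, nor on how the boundary sphere is split into pieces. So the first step is to say precisely what $\mathcal{C}'_{543}$ is. As a simplicial complex it is literally $\mathcal{C}_{543}$: the same vertices, edges, triangles and the same $543$ tetrahedra. The only new data is the choice of the four genuine vertices $A=a_2$, $B=w_{2,34}$, $C=a_4$, $D=a_3$. These come with a decomposition of $\partial\mathcal{C}_{543}$ along the red and cyan arcs of Figure \ref{figure:C57} into the disks $c_3$, $c_3$, $c_4$, $c_{30}$.

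Next I would check that this re-reading really gives a triangulation of the 3-ball. This holds because the underlying space $|\mathcal{C}'_{543}|=|\mathcal{C}_{543}|$ is the closed complement of the 3-ball $\mathcal{C}_{57}$ in $\mathcal{C}_{600}\cong\mathbb{S}^3$. That complement is again a 3-ball, since $\partial\mathcal{C}_{57}$ is a tame (simplicial) 2-sphere in $\mathbb{S}^3$, so the Schoenflies theorem applies. Alternatively this is already part of Lemma \ref{lemma:fnsC543}.

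Then I would transport the two combinatorial conditions along the identity isomorphism. Suppose $\{D_1,\dots,D_{k+1}\}$ is a set of vertices of $\mathcal{C}'_{543}$ that are pairwise joined by edges. Then it is pairwise joined in $\mathcal{C}_{543}$, so it spans a simplex there by Lemma \ref{lemma:fnsC543}, hence also in $\mathcal{C}'_{543}$. Likewise, a 4-circle with no diagonal in $\mathcal{C}'_{543}$ would be an empty square in $\mathcal{C}_{543}$, which Lemma \ref{lemma:fnsC543} rules out.

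The only point requiring care, and the one I expect to be the mild obstacle, is to make sure the statement does not secretly demand more. For example, one might want the boundary pieces $c_3,c_4,c_{30}$ to be full subcomplexes, or the tetrahedron $\{A,B,C,D\}$ to be subdivided compatibly. Those would be separate claims, which I would verify later from Figure \ref{figure:C57} and Lemma \ref{lemma:interiorC543} as needed. For the lemma as stated, isomorphism invariance suffices.
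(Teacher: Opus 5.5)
Your proposal is correct and matches the paper's argument. The paper likewise observes that $\mathcal{C}'_{543}$ is the same simplicial complex as $\mathcal{C}_{543}$, with only a different choice of genuine vertices, so being a flag-no-square triangulation of the 3-ball transfers directly from Lemma \ref{lemma:fnsC543}. Your separate Schoenflies-type check of the 3-ball property is harmless but unnecessary, since Lemma \ref{lemma:fnsC543} already gives it.
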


 \begin{figure}
 	\begin{center}
 		\begin{tikzpicture}
 		\node at (0,0) {\includegraphics[width=11cm,height=9cm]{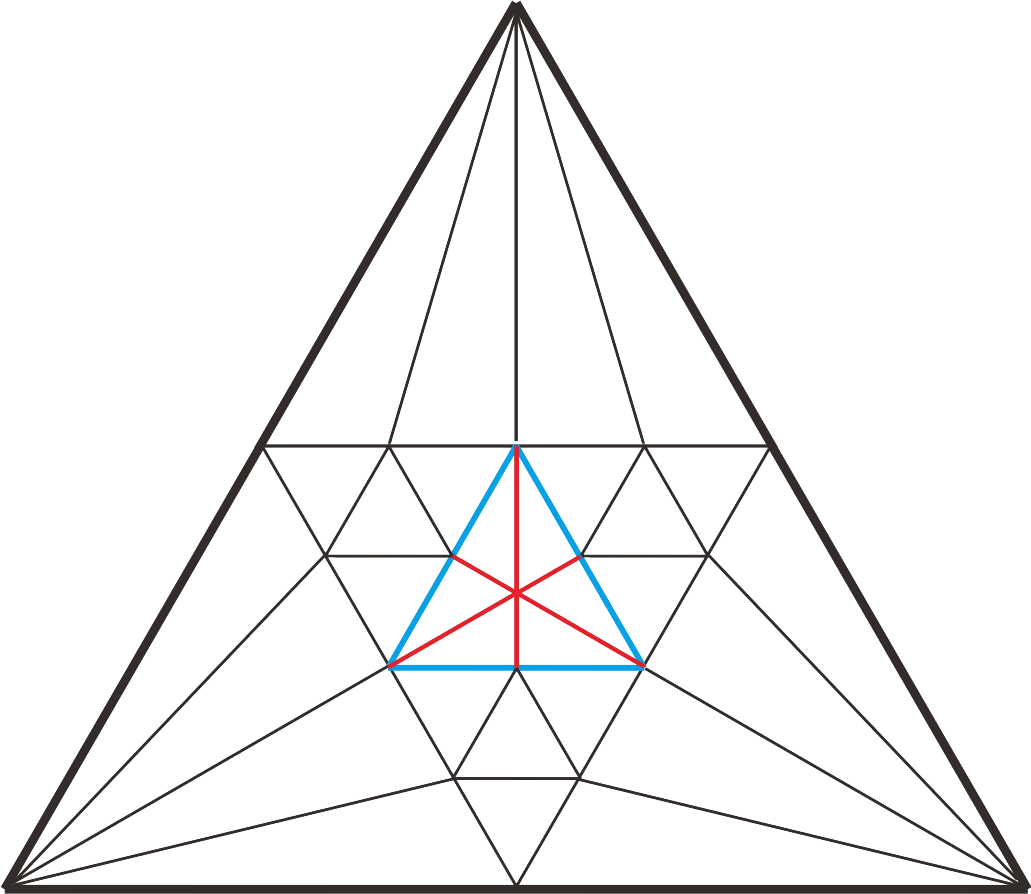}};

 		\node at (0.6,4.4){ $A$};
 		\node at (-5.7,-3.8){ $D$};
 		\node at (5.7,-3.8){ $C$};
 		
 		\node at (0.3,0.2){\tiny $a_{12}$};
 			\node at (1.7,-2.15){\tiny $a_{14}$};
 				\node at (-1.7,-2.15){\tiny $a_{13}$};

 		\end{tikzpicture}
 	\end{center}
 	\caption{The triangulation $c_{30}$ of a 2-disk. }
 	\label{figure:c30}
 \end{figure}

 \subsection{The 3-complex $\mathcal{C}_{1137}$}  \label{subsection:c1137}  
 Note that the subdivisions of the triangles with vertex sets
 \(\{A,C,D\}\) and \(\{B,C,D\}\) in \(\mathcal{C}'_{543}\)
 are not isomorphic to the triangulation \(c_{10}\);
 they are isomorphic to the \(2\)-complexes \(c_{30}\) and \(c_{4}\),
 respectively. We need two more \(3\)-complexes, say
 \(\mathcal{C}_{1137}\) and \(\mathcal{C}_{1163}\), which are certain
 subdivisions of a tetrahedron, with a \(2\)-face of the tetrahedron
 subdivided into \(c_4\) and \(c_{30}\), respectively.
 
 Before constructing \(\mathcal{C}_{1137}\) and \(\mathcal{C}_{1163}\),
 we need two preliminary \(3\)-complexes, say \(\mathcal{C}_{590}\)
 and \(\mathcal{C}_{1133}\).

 \textbf{The 3-complex $\mathcal{C}_{590}$.} Since $\mathcal{C}_{57}\cup \mathcal{C}_{543}=\mathcal{C}_{600}$ is a triangulation of the 3-sphere, we delete ten  tetrahedra from  $\mathcal{C}_{600}$. Specifically, we remove the (open) cone $\mathcal{C}_{10}$ over the vertex $v_1$ based on the triangulation $c_{10}$ of the 2-disk (a special subdivision of a triangle with vertex set $\{a_2,a_3,a_4\}$). We denote this triangulation of the 3-ball by  $\mathcal{C}_{590}$. The boundary of the 3-complex $\mathcal{C}_{590}$ is a triangulation of the 2-sphere consisting of 16 triangles, formed by the triangulations $c_6$  and $c_{10}$ of the 2-disk; see Figures~\ref{figure:c10} and~\ref{figure:c6c4}.

  \begin{lemma} \label{lemma:squareC590}  The 3-complex $\mathcal{C}_{590}$  is   a flag  triangulation of the 3-ball,   even not a full subcomplex of $\mathcal{C}_{600}$.    Moreover, $\mathcal{C}_{590}$ is  not a  square-free  triangulation of the 3-ball, the only empty squares in $\mathcal{C}_{590}$   are the circles:
  	
  		\begin{itemize}
  		
  		\item   $(v_1, a_{23}, w_{2,34}, a_{24}, v_1)$;

  			\item  $(v_1, a_{32}, w_{3,24},a_{34}, v_1)$;

  			\item   $(v_1, a_{42}, w_{4,23},a_{43}, v_1)$.
  		
  			\end{itemize}

 \end{lemma}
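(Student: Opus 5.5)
Since $\mathcal{C}_{590}=\mathcal{C}_{600}\setminus \mathcal{C}_{10}$, where $\mathcal{C}_{10}$ is the open cone over $c_{10}$ with apex $v_1$, I will compare the $1$-skeleton of $\mathcal{C}_{590}$ with that of $\mathcal{C}_{600}$. The vertex set of $\mathcal{C}_{590}$ is all of $\mathcal{C}_{600}^{(0)}$, because every vertex of the cone survives ($v_1$ still lies in the tetrahedra of $\mathcal{C}_{17}$ containing it). An edge of $\mathcal{C}_{600}$ is lost exactly when all tetrahedra containing it lie in $\mathcal{C}_{10}$. These are the edges $[v_1,u]$ with $u$ an interior vertex of $c_{10}$, namely $u\in\{w_{2,34},w_{3,24},w_{4,23}\}$; there are no interior edges of $c_{10}$ to lose, since $c_{10}$ survives as part of $\partial\mathcal{C}_{590}$. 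So, as a graph, $\mathcal{C}_{590}^{(1)}$ is $\mathcal{C}_{600}^{(1)}$ with the three edges $[v_1,w]$ deleted. In particular $\mathcal{C}_{590}$ is not full in $\mathcal{C}_{600}$, since $v_1$ and $w$ both lie in it.

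\textbf{Flagness.} Take a clique $S$ in $\mathcal{C}_{590}^{(1)}$. It is also a clique in $\mathcal{C}_{600}^{(1)}$, which is flag, so $S$ spans a simplex $\sigma$ of $\mathcal{C}_{600}$. I then need $\sigma$ to lie in $\mathcal{C}_{590}$. If $\sigma$ does not lie in $\mathcal{C}_{590}$, then $\sigma$ lies only in tetrahedra of $\mathcal{C}_{10}$ and is not contained in $c_{10}$. Every simplex of $\mathcal{C}_{10}$ not contained in $c_{10}$ contains $v_1$. Such a simplex either contains an edge $[v_1,w]$, which is impossible because $S$ is a clique in $\mathcal{C}_{590}$, or it is $v_1$ joined to a boundary simplex of $c_{10}$. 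The latter lies in $c_6\ast$ and hence in $\mathcal{C}_{17}\subset\mathcal{C}_{590}$, so this case causes no problem either.

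\textbf{Empty squares.} Let $(x,y,z,t,x)$ be an empty square in $\mathcal{C}_{590}$. If both diagonals were absent in $\mathcal{C}_{600}$ and all four sides were present there, it would be an empty square of $\mathcal{C}_{600}$, which is impossible. The sides are edges of $\mathcal{C}_{590}$, hence of $\mathcal{C}_{600}$, so some diagonal must be one of the deleted edges, say $[v_1,w]$. The square is therefore $(v_1,y,w,t,v_1)$, where $y,t$ are common neighbours of $v_1$ and $w$ in $\mathcal{C}_{590}$ and $[y,t]$ is not an edge.

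The common neighbours of $v_1$ and $w$ in $\mathcal{C}_{600}$ form the link of the edge $[v_1,w]$, which is a pentagon. This link lies in the link of $v_1$. That link is an icosahedron consisting of $v_2,v_3,v_4$ and the vertices of $c_{10}$ corresponding to the $\{a_2,a_3,a_4\}$ face. For $w=w_{2,34}$, reading off Figure~\ref{figure:c10}, the pentagon consists of $w$'s neighbours in $c_{10}$: $a_{23},a_{24}$, the other two interior vertices $w_{3,24},w_{4,23}$, and $a_2$, in cyclic order. Non-adjacent pairs in a pentagon are exactly the non-consecutive pairs. I then keep only pairs $y,t$ with both edges $[v_1,y],[v_1,t]$ surviving, which excludes the interior $w$'s. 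Among $\{a_2,a_{23},a_{24}\}$, the non-adjacent pair is $\{a_{23},a_{24}\}$, since $a_2$ is adjacent to both. This gives the listed square, and the other two squares follow by the symmetry permuting $2,3,4$. One should also rule out a square whose two diagonals are both deleted edges; this cannot happen because both diagonals would share $v_1$.

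\textbf{Main obstacle.} The hard step is the exact identification of the pentagon links from $c_{10}$, that is, which vertices neighbour each $w_{i,jk}$ and in what cyclic order. I would extract this carefully from Figure~\ref{figure:c10} and the dual $120$-cell description.
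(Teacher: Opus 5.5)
Your proposal is correct and follows essentially the same route as the paper. Both arguments show that $\mathcal{C}_{590}^{(1)}$ is $\mathcal{C}_{600}^{(1)}$ with the three edges $[v_1,w_{i,jk}]$ removed, and deduce flagness because every lost simplex contains one of these edges. Both then note that an empty square must have some $[v_1,w]$ as a diagonal, so its other two vertices come from the pentagon link $\{a_2,a_{23},w_{3,24},w_{4,23},a_{24}\}$ with the interior $w$'s excluded.

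The one ingredient you use only implicitly, and should state, is flagness of $\mathcal{C}_{600}$. It is what makes the common neighbours of $v_1$ and $w$ coincide with the edge link. It is also what guarantees that non-consecutive pentagon vertices such as $a_{23},a_{24}$ are genuinely non-adjacent in the whole complex.
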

 
  \begin{proof}
  	First,  readers may refer to  Figure 	\ref{figure:c10} for vertex labels in the special subdivision of the triangle with vertex set $\{a_1,a_2,a_3\}$; analogous labels apply to the special subdivision of the triangle with vertex set $\{a_2,a_3,a_4\}$.  So the new vertices of the special subdivision of the triangle with
  	vertex set \(\{a_2,a_3,a_4\}\) are
  	\[
  	\{a_{23},a_{24},a_{34},w_{2,34},w_{3,24},w_{4,23}\}.
  	\]

  	Note that the 3-complexes  $\mathcal{C}_{590}$ and  $\mathcal{C}_{600}$ have identical vertex sets. Specifically,  $\mathcal{C}_{600}$ contains exactly three additional edges compared with  $\mathcal{C}_{590}$, namely $[v_1,w_{2,34}]$, $[v_1,w_{3,24}]$ and $[v_1,w_{4,23}]$. Moreover,  $\mathcal{C}_{600}$ has exactly twelve additional triangles and ten additional tetrahedra relative to  $\mathcal{C}_{590}$.
  	
Let $\{C_1, C_2,\cdots,  C_k\}$ be a collection of pairwise adjacent vertices in $\mathcal{C}_{590}$. 	Then $k \leq 4$ and  $\{C_1, C_2,\cdots,  C_k\}$ spans   a $(k-1)$-simplex $s$ in $\mathcal{C}_{600}$. We may assume $s$ is one of  the aforementioned    twelve triangles or ten tetrahedra. However,  each such simplex contains at least one edge from the set $$\{[v_1,w_{2,34}], [v_1,w_{3,24}], [v_1,w_{4,23}]\}, $$ which are edges belonging to  $\mathcal{C}_{600}$   but not to $\mathcal{C}_{590}$. Therefore, $\mathcal{C}_{590}$  is   a flag  triangulation of the 3-ball.

  	Suppose that  $(C_1, C_2, C_3,C_4, C_1)$ is an empty square in $\mathcal{C}_{590}$. 
  	Since there is no empty square in $\mathcal{C}_{600}$, we may assume that $C_1=v_1$ and  $C_3=w_{2,34}$.	
  		 Now $\{v_1, C_2, w_{2,34}\}$ spans a triangle in  $\mathcal{C}_{600}$. Since each edge in $\mathcal{C}_{600}$ has  edge link the pentagon, there are  five choices of the vertex $C_2$ (and $C_4$) in $\mathcal{C}_{600}$, say one of $$\{a_2, a_{23},w_{3,24},w_{4,23},a_{24}\}.$$ But in fact there is no edge  in $\mathcal{C}_{590}$ between $v_1$ and $w_{3,24}$ (and no edge  between $v_1$ and $w_{4,23}$). There are exact three choices of $C_2$ (and $C_4$), say one of $$\{a_2, a_{23},a_{24}\}.$$ We have  $$(C_1, C_2, C_3,C_4, C_1)= (v_1, a_{23}, w_{2,34},a_{24}, v_1)$$ up to re-ordering the vertices.

  		Similarly, there are two more empty squares, say 
  	$(v_1, a_{32}, w_{3,24},a_{34}, v_1)$ and 
  $(v_1, a_{42}, w_{4,23},a_{43}, v_1)$.

  	 \end{proof}

We also have the following lemma, which will be used in the proof of Lemma \ref{lemma:squareC1133}.
 
 \begin{lemma} \label{lemma:interiorC590}

 	Consider the 2-complex $c_{10}$, as a  part of $\partial \mathcal{C}_{590}$  and isomorphic to the  special subdivision of the triangle with vertex set $\{a_2,a_3,a_4\}$. If a vertex $E$ lies in the interior of $\mathcal{C}_{590}$, there is no pair of vertices $G$ and  $F$  in the 2-complex $c_{10}$ such that $\mathcal{C}_{590}$ contains edges $[E,G]$ and $[E,F]$ but not the edge $[G,F]$, except for the following cases:
\begin{itemize}
	\item $\{E,G,F\}=\{v_2, a_{3}, a_{4}\}$;
	
	\item $\{E,G,F\}=\{v_3, a_{2}, a_{4}\}$;
		
		\item $\{E,G,F\}=\{v_4, a_{2}, a_{3}\}$. 
		
\end{itemize}

 \end{lemma}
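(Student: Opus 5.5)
We argue as in Lemma \ref{lemma:interiorC543}, using that $\mathcal{C}_{590}$ and $\mathcal{C}_{600}$ have the same vertex set and that $\mathcal{C}_{600}$ has exactly the three extra edges $[v_1,w_{2,34}]$, $[v_1,w_{3,24}]$, $[v_1,w_{4,23}]$ (proof of Lemma \ref{lemma:squareC590}). Suppose $E$ is an interior vertex of $\mathcal{C}_{590}$ and $G,F\in c_{10}$ satisfy $[E,G],[E,F]\in\mathcal{C}_{590}$ and $[G,F]\notin\mathcal{C}_{590}$.

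First, $[G,F]\notin\mathcal{C}_{600}$. Indeed, both endpoints lie in $c_{10}$, while every extra edge has the endpoint $v_1\notin c_{10}$. Next, $v_1$ is adjacent in $\mathcal{C}_{600}$ to every vertex of $c_{10}$, because $\mathcal{C}_{10}$ is the cone from $v_1$ over $c_{10}$. So $(E,G,v_1,F,E)$ is a $4$-circle in $\mathcal{C}_{600}$ with $E\neq v_1$, since $v_1$ lies on $\partial\mathcal{C}_{590}$ (in $c_6$). Since $\mathcal{C}_{600}$ is flag-no-square and $[G,F]$ is absent, the diagonal $[E,v_1]$ must be an edge of $\mathcal{C}_{600}$. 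Hence $E$ lies in the link of $v_1$ in $\mathcal{C}_{600}$. By Lemma \ref{lemma:fnsC543}, this link (an icosahedron) consists of $v_2,v_3,v_4$ together with the vertices of $c_{10}$.

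The vertices of $c_{10}$ lie on $\partial\mathcal{C}_{590}$, so they are not interior. It remains to check whether $v_2,v_3,v_4$ are interior in $\mathcal{C}_{590}$; each is interior, being a vertex of $\Delta_0$ surrounded by tetrahedra of $\mathcal{C}_{57}$ other than the removed cone. So $E\in\{v_2,v_3,v_4\}$, and by symmetry we take $E=v_4$.

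The main step is to determine the neighbours of $v_4$ in $c_{10}$ and the non-edges among them. The neighbours of $v_4$ in $c_{10}$ are the common neighbours of $v_1$ and $v_4$, that is, the link of the edge $[v_1,v_4]$ minus $\{v_2,v_3\}$. This link is a pentagon containing $v_2,v_3$, namely $(v_2,a_3,v_3,\ldots)$ arising from $\Delta_0,\Delta_3$ and the neighbouring tetrahedra. From the structure of $\mathcal{C}_{17}$ we read it as $(v_2,a_3,a_{23},a_2,v_3)$ or a similar listing. In any case the three $c_{10}$-vertices in the pentagon form a path. Within that path, the only non-adjacent pair in $c_{10}$ is the pair of endpoints, here $a_2,a_3$. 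This gives exactly the exceptional triple $\{v_4,a_2,a_3\}$, and the other two cases follow by symmetry. The hard part is pinning down this pentagon from the $\mathcal{C}_{17}$ construction and Figure \ref{figure:c10}. Since the three interior $c_{10}$-vertices form a path in a pentagon, only the endpoint pair can fail to be adjacent, so the argument is robust to the exact labels.
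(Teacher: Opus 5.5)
Your proposal is correct and follows essentially the paper's route: the $4$-circle $(E,G,v_1,F,E)$ in the flag-no-square $\mathcal{C}_{600}$ forces the edge $[E,v_1]$, so $E\in\{v_2,v_3,v_4\}$, and one then reads off the neighbours of $E$ in $c_{10}$. The only differences are in that last step and in presentation. The paper intersects the link of $v_2$ with $c_{10}$, while you use the pentagonal link $(v_2,a_3,a_{23},a_2,v_3)$ of $[v_1,v_4]$; you should drop the garbled preliminary listing $(v_2,a_3,v_3,\ldots)$, since $a_3$ is not adjacent to $v_3$, and say explicitly that flagness of $\mathcal{C}_{600}$ makes the common neighbours of $v_1,v_4$ exactly the edge-link vertices and makes the path endpoints non-adjacent.
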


\begin{proof} Otherwise, consider the cone  $ \mathcal{C}_{10}$ over vertex $v_1$ based on $c_{10}$.
	Since $\mathcal{C}_{600}=\mathcal{C}_{590} \cup \mathcal{C}_{10}$,  there is a length 4 circle $(E, G, v_1,E, E)$, and it  can not be empty in $\mathcal{C}_{600}$. But from 
	$\mathcal{C}_{590}$ to $\mathcal{C}_{600}$, we only add three  more edges, each of which has $v_1$ as a vertex; thus,  there is an edge $[v_1,E]$ in $\mathcal{C}_{600}$. Then  $E$ is a vertex of the 3-complex 	$\mathcal{C}_{57}$ by Lemma \ref{lemma:fnsC543}. The vertex  $v_1$ is adjacent to exactly twelve vertices in $\mathcal{C}_{600}$,
say 	$v_2,v_3,v_4$ and nine vertices in $c_{10}$. So we may assume  $E=v_2$ in $\mathcal{C}_{57}$.  The vertex $v_2$ is adjacent to exactly twelve vertices in $\mathcal{C}_{600}$,
say 	$v_1,v_3,v_4$ and nine vertices in a copy of  $c_{10}$ (which is the special subdivision of the triangle with vertex set $\{a_1,a_3,a_4\}$). Then $\{G, F\}=\{a_{3}, a_{4}\}$  up to re-ordering the vertices.  

Similarly, there are two more   triples of vertices, say 
  $\{E,G,F\}=\{v_3, a_{2}, a_{4}\}$ and 
  $\{E,G,F\}=\{v_4, a_{2}, a_{3}\}$ satisfy the conditions in Lemma \ref{lemma:interiorC590}.

\end{proof}

 	\textbf{The 3-complex  $\mathcal{C}_{1133}$.} We take another copy of $\mathcal{C}_{543}$ and denote it by $\overline{\mathcal{C}}_{543}$, which is a special subdivision of the tetrahedron with vertex set $\{\overline{a}_1,\overline{a}_2,\overline{a}_3,\overline{a}_4\}$. Here, each of the vertices  $v_i$, $a_{i}$, $a_{ij}$ and $w_{i,jk}$ in $\mathcal{C}_{543}$ is denoted by $\overline{v}_i$, $\overline{a}_{i}$, $\overline{a}_{ij}$ and $\overline{w}_{i,jk}$ in $\overline{\mathcal{C}}_{543}$, respectively.
 	We glue $\mathcal{C}_{590}$ and $\overline{\mathcal{C}}_{543}$ along the special subdivisions $c_{10}$ of the triangles with vertex sets  $\{a_2,a_3,a_4\}$ and  $\{\overline{a}_2,\overline{a}_3,\overline{a}_4\}$,  identifying $a_i$ with $\overline{a}_i$ for $i=2,3,4$. This  results in a triangulation of a 3-ball with 1133 tetrahedra; we denote this 3-complex by $\mathcal{C}_{1133}$. We note that the vertex $$a_1 \in 
 	\mathcal{C}_{543} 	\subset \mathcal{C}_{590}	\subset \mathcal{C}_{1133}$$ lies in the interior of $ \mathcal{C}_{1133}$.
 	Figure~\ref{figure:C1133} illustrates the boundary of \(\mathcal{C}_{1133}\).


 	\begin{lemma} \label{lemma:squareC1133} 
 		
 		The \(3\)-complex \(\mathcal{C}_{1133}\) is a flag triangulation of the \(3\)-ball. However, it is not a square-free triangulation, and the only empty squares in \(\mathcal{C}_{1133}\) are the three empty squares in \(\mathcal{C}_{590}\) given in Lemma~\ref{lemma:squareC590}.
 
 	\end{lemma}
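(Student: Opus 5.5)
We argue by decomposing $\mathcal{C}_{1133}=\mathcal{C}_{590}\cup_{c_{10}}\overline{\mathcal{C}}_{543}$. The common subcomplex $c_{10}$ is full in both pieces: it is a subcomplex of $\partial\mathcal{C}_{543}$, and $\mathcal{C}_{543}$ is full in $\mathcal{C}_{600}$. Every simplex and every edge of $\mathcal{C}_{1133}$ lies entirely in one of the two pieces. Any vertex not in $c_{10}$ belongs to exactly one piece.

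\textbf{Flagness.} Take pairwise adjacent vertices $\{C_1,\dots,C_k\}$ in $\mathcal{C}_{1133}$. No edge joins a vertex of $\mathcal{C}_{590}\setminus c_{10}$ to a vertex of $\overline{\mathcal{C}}_{543}\setminus c_{10}$. Hence all $C_i$ lie in a single piece. Flagness of $\mathcal{C}_{590}$ (Lemma~\ref{lemma:squareC590}) or of $\overline{\mathcal{C}}_{543}$ (Lemma~\ref{lemma:fnsC543}) then shows that they span a simplex there, hence in $\mathcal{C}_{1133}$. This uses fullness of $c_{10}$: an edge between two vertices of $c_{10}$ present in one piece is present in the other.

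\textbf{Squares.} Let $(C_1,C_2,C_3,C_4,C_1)$ be an empty square in $\mathcal{C}_{1133}$.

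If all four vertices lie in one piece, the square is empty there too, since the pieces are full subcomplexes of $\mathcal{C}_{1133}$. Indeed, an edge of $\mathcal{C}_{1133}$ between vertices of one piece lies in that piece, because $c_{10}$ is full in each. So the square is either one of the three squares of Lemma~\ref{lemma:squareC590} or impossible by Lemma~\ref{lemma:fnsC543}. Conversely, those three squares remain empty in $\mathcal{C}_{1133}$. The missing diagonals are $[v_1,w_{\cdot}]$ and $[a_{23},a_{24}]$-type pairs; neither can appear in $\overline{\mathcal{C}}_{543}$, because $v_1\notin c_{10}$ and $c_{10}$ is full.

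Otherwise, the square meets both $\mathcal{C}_{590}\setminus c_{10}$ and $\overline{\mathcal{C}}_{543}\setminus c_{10}$. Since there are no edges across, the square passes through $c_{10}$ at two opposite vertices. So, up to relabelling, $C_2=G$ and $C_4=F$ lie in $c_{10}$, with $C_1=E\in\mathcal{C}_{590}\setminus c_{10}$ and $C_3=E'\in\overline{\mathcal{C}}_{543}\setminus c_{10}$. The diagonal $[G,F]$ is absent. Now Lemma~\ref{lemma:interiorC543}, applied to $E'$, forbids such a configuration whenever $E'$ is interior to $\overline{\mathcal{C}}_{543}$ or lies in $\partial\overline{\mathcal{C}}_{543}\setminus c_{10}$. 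These are exactly the possibilities for $E'\notin c_{10}$, so no such square exists.

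\textbf{Main obstacle.} The delicate point is making sure the case analysis is exhaustive. We must check that Lemma~\ref{lemma:interiorC543} covers every vertex of $\overline{\mathcal{C}}_{543}$ off $c_{10}$, including those on the boundary of $c_{10}$ viewed from outside. We must also check that the exceptional triples of Lemma~\ref{lemma:interiorC590}, such as $\{v_2,a_3,a_4\}$, are harmless. They are, because the $\overline{\mathcal{C}}_{543}$ side already rules out any completion. If needed, one could instead use Lemma~\ref{lemma:interiorC590} on the $\mathcal{C}_{590}$ side to cross-check.
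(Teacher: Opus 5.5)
Your proof is correct. The flagness part is the same as the paper's. For square-freeness you take a genuinely shorter route.

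\textbf{How the paper argues.} It first checks from a figure that $\partial\mathcal{C}_{1133}$ is flag-no-square. This guarantees that a putative new square has a vertex in the interior of $\overline{\mathcal{C}}_{543}$, of $\mathcal{C}_{590}$, or of $c_{10}$, and the paper treats these three locations separately:
\begin{itemize}
\item the first case uses Lemma~\ref{lemma:interiorC543};
\item the second uses Lemma~\ref{lemma:interiorC590}, plus an inspection of Figure~\ref{figure:C57} to dispose of the exceptional triple $\{v_2,a_3,a_4\}$;
\item the third needs a further sub-case analysis, again invoking Lemma~\ref{lemma:interiorC543}.
\end{itemize}

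\textbf{How you argue.} You observe that a square lying in neither piece must have one vertex in each private part, $\mathcal{C}_{590}\setminus c_{10}$ and $\overline{\mathcal{C}}_{543}\setminus c_{10}$. These two vertices are necessarily opposite, and the other two lie in $c_{10}$. A single application of Lemma~\ref{lemma:interiorC543} to the $\overline{\mathcal{C}}_{543}$-side vertex then finishes. This is the usual criterion for gluing two flag-no-square-type complexes along a common full subcomplex: it suffices that one side has no ``half-square'' over that subcomplex. It makes both the boundary check and Lemma~\ref{lemma:interiorC590} unnecessary.

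\textbf{Your exhaustiveness worry is moot.} The proof of Lemma~\ref{lemma:interiorC543} works verbatim for every vertex $E$ of $\mathcal{C}_{543}$ not in $c_{10}$. Square-freeness of $\mathcal{C}_{600}$ forces the edge $[E,v_1]$, and the neighbours of $v_1$ are $v_2,v_3,v_4$ and the vertices of $c_{10}$. Your phrase about vertices ``on the boundary of $c_{10}$ viewed from outside'' is confused: such vertices lie in $c_{10}$ and never play the role of $E'$.

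\textbf{One justification is incomplete.} You claim $c_{10}$ is full in both pieces, but fullness of $\mathcal{C}_{543}$ in $\mathcal{C}_{600}$ only places a simplex spanned by vertices of $c_{10}$ inside $\mathcal{C}_{543}$. You also need $c_{10}$ to be full in $\mathcal{C}_{543}$. Either of the following supplies it:
\begin{itemize}
\item $\partial\mathcal{C}_{543}$ is full in $\mathcal{C}_{543}$ (Lemma 2.10 of \cite{PrzytyckiS:2009}), and $c_{10}$ is visibly full in $\partial\mathcal{C}_{543}$;
\item $c_{10}$ is a full subcomplex of the icosahedral link of $v_1$ in the flag complex $\mathcal{C}_{600}$.
\end{itemize}
Either way you get fullness in $\mathcal{C}_{600}$, hence in $\mathcal{C}_{590}$ and in $\overline{\mathcal{C}}_{543}$.
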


 	\begin{proof} 	
 			If $\{C_1, C_2,\cdots, C_k\}$ is a collection of vertices in $$\mathcal{C}_{1133} = \mathcal{C}_{590} \cup_{c_{10}} \overline{\mathcal{C}}_{543} $$ that are pairwise connected by edges, then:

 		  \begin{itemize}
 			
 			\item  
 			Either there exists a vertex, say $C_1$, that lies in the interior of $\mathcal{C}_{590}$ (or $\overline{\mathcal{C}}_{543}$); in this case, all vertices $C_i$ lie in the complex $\mathcal{C}_{590}$ (or $\overline{\mathcal{C}}_{543}$).
 			By Lemmas \ref{lemma:fnsC543} and \ref{lemma:squareC590}, $k$ is at most $4$, and $\{C_1, C_2,\cdots, C_k\}$ spans a $(k-1)$-simplex in $\mathcal{C}_{1133}$;

 		\item  Or all  vertices $C_i$ lie in $c_{10} \cup \partial \mathcal{C}_{1133}$. Since there is no edge between a vertex in $\partial \mathcal{C}_{590} \setminus c_{10}$ and a vertex in $\partial \overline{\mathcal{C}}_{543} \setminus c_{10}$, all  vertices $C_i$ lie in $\partial \overline{\mathcal{C}}_{543}$ or $\partial \mathcal{C}_{590}$. By Lemmas \ref{lemma:fnsC543} and \ref{lemma:squareC590}, $k$ is at most 4, and $\{C_1, C_2,\cdots, C_k\}$ spans a $(k-1)$-simplex in $\mathcal{C}_{1133}$.
 		  \end{itemize}	
 		In conclusion, the 3-complex $\mathcal{C}_{1133}$ is a flag triangulation of the 3-ball.

 	The \(3\)-complex \(\mathcal{C}_{543}\) is a full subcomplex of \(\mathcal{C}_{600}\). In particular, there are no edges between the vertices \(a_{23}\) and \(a_{24}\) in \(\mathcal{C}_{543}\), and hence no edges between the vertices \(\overline{a}_{23}\) and \(\overline{a}_{24}\) in \(\overline{\mathcal{C}}_{543}\). Consequently, when \(\overline{\mathcal{C}}_{543}\) is glued to \(\mathcal{C}_{590}\) to obtain \(\mathcal{C}_{1133}\), the three empty squares in \(\mathcal{C}_{590}\) (given in Lemma~\ref{lemma:squareC590}) remain empty squares in \(\mathcal{C}_{1133}\).

 	 Suppose \((C_1, C_2, C_3, C_4, C_1)\) is an empty square in \(\mathcal{C}_{1133}\), we may also assume that the circle is not totally embedded in \(\overline{\mathcal{C}}_{543}\) or \(\mathcal{C}_{590}\). From Figure~\ref{figure:C1133}, one can check directly that the boundary of \(\mathcal{C}_{1133}\) is a flag-no-square triangulation of a \(2\)-sphere. Thus, the circle \((C_1, C_2, C_3, C_4, C_1)\) is not totally embedded in the boundary of \(\mathcal{C}_{1133}\).  Therefore, at least one vertex, say \(C_1\), lies in one of the following:
 	 
 	 \begin{itemize}
 	 	\item the interior of \(\overline{\mathcal{C}}_{543}\);
 	 	\item the interior of \(\mathcal{C}_{590}\);
 	 	\item the interior of \(c_{10} = \overline{\mathcal{C}}_{543} \cap \mathcal{C}_{590}\).
 	 \end{itemize}

 	  		 If $C_1$ lies in the interior of    $\overline{\mathcal{C}}_{543}$,  then the vertices  $C_2$ and $C_4$ lie in $\overline{\mathcal{C}}_{543}$, and $C_3$  lies in $\mathcal{C}_{590}$. This in turn implies $C_2$ and $C_4$ lie in $c_{10}=\overline{\mathcal{C}}_{543} \cap \mathcal{C}_{590}$. The triple $\{C_1,C_2,C_4\}$  contradicts  to  Item (\ref{interiorC543:1})  of Lemma \ref{lemma:interiorC543}.

 	  		 So we assume that none of the \(C_i\) lies in the interior of \(\overline{\mathcal{C}}_{543}\), and that \(C_1\) lies in the interior of \(\mathcal{C}_{590}\). Then the vertices \(C_2\) and \(C_4\) lie in \(\mathcal{C}_{590}\), and \(C_3\) lies in \(\partial \overline{\mathcal{C}}_{543} \setminus c_{10}\). This in turn implies that \(C_2\) and \(C_4\) lie in \(c_{10} = \overline{\mathcal{C}}_{543} \cap \mathcal{C}_{590}\). By Lemma~\ref{lemma:interiorC590}, without genericity, we may assume that
 	  		 \[
 	  		 \{C_1, C_2, C_4\} = \{v_2, a_3, a_4\}.
 	  		 \]
 	  		 From Figure~\ref{figure:C57}, \(C_3\) must be \(a_{34} = \overline{a}_{34} \in c_{10}\), and all of \(\{C_1, C_2, C_3, C_4\}\) lie in \(\mathcal{C}_{590}\), a contradiction.

 	  		 Finally if $C_1$ lies in the interior of $c_{10}=\overline{\mathcal{C}}_{
 	  		 	543} \cap \mathcal{C}_{590}$,  without genericity, we may assume $C_1=w_{2,34}=\overline{w}_{2,34} $:
 	  		 
 	  		  \begin{itemize}
 	  		 	
 	  		 	\item   We first assume $C_2$  lies in $\partial \overline{\mathcal{C}}_{543}$:
 	  		 
 	  		 \begin{enumerate}
 	  		 	
 	  		 	\item \label{squareC1133item1}
 	  		 	 If $C_2 \in \partial \overline{\mathcal{C}}_{543}\setminus c_{10}$, then $C_3$  lies in $\partial \overline{\mathcal{C}}_{543}$, and $C_4$ lies in $ \partial \mathcal{C}_{590}$. This in turn implies $C_3$ lies in $c_{10}=\overline{\mathcal{C}}_{543} \cap \mathcal{C}_{590}$, and $C_3$ is one of $a_3, a_4$ or $a_{34}$. Consider the triple of vertices $\{C_2,C_1,C_3\}$, which contradictions to  Item (\ref{interiorC543:2})  of Lemma \ref{lemma:interiorC543}.
 	  		 
 	  		 	\item \label{squareC1133item2}

 	  		 	If \(C_2 \in c_{10}\), then from (\ref{squareC1133item1}), by symmetry of \(C_2\) and \(C_4\), we may assume that \(C_4\) does not lie in \(\partial \overline{\mathcal{C}}_{543} \setminus c_{10}\), so \(C_4 \in \partial \mathcal{C}_{590}\).
 	  		 	We must have \(C_3 \in \partial \overline{\mathcal{C}}_{543} \setminus c_{10}\); otherwise, all of \(\{C_1, C_2, C_3, C_4\}\) lie in \(\mathcal{C}_{590}\), a contradiction.
 	  		 	But then \(C_4\) lies in \(\partial \overline{\mathcal{C}}_{543} \cap \partial \mathcal{C}_{590} = c_{10}\),  and all of \(\{C_1, C_2, C_3, C_4\}\) lie in  $ \overline{\mathcal{C}}_{543}$,  also a contradiction.


 	  		\end{enumerate}

 	  \item

 		 By symmetry, we now  assume  both $C_2$ and $C_4$   lie in $\partial \mathcal{C}_{590}\setminus c_{10}$, and then  $C_3$   lies in $ c_{10}$.  Then  all of  $\{C_1, C_2, C_3, C_4\}$ lie in $\mathcal{C}_{590}$, a contradiction.

 	  \end{itemize}

 In conclusion, there are no other empty squares in $\mathcal{C}_{1133}$ except the three empty squares in $\mathcal{C}_{590}$ showed in Lemma \ref{lemma:squareC590}.

 	\end{proof}

 	 \begin{figure}
 	 	\begin{center}
 	 		\begin{tikzpicture}
 	 		\node at (0,0) {\includegraphics[width=11cm,height=9cm]{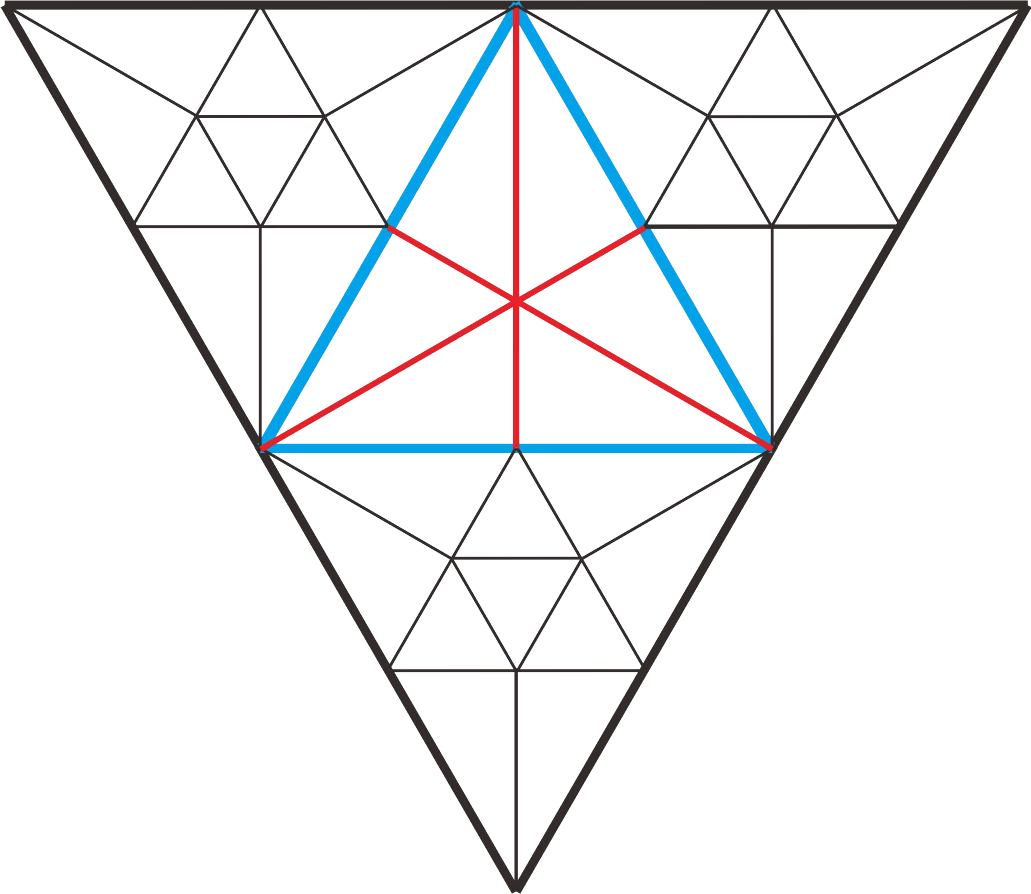}};

 	 		\node at (0.3,1.8){\small $v_1$};
 	 		
 	 		\node at (0.45,-0.25){\small $\overline{a}_{34}$};
 	 		
 	 		\node at (-1.9,2.0){\small $\overline{a}_{23}$};
 	 		
 	 		\node at (1.9,2.0){\small  $\overline{a}_{24}$};

 	 		\node at (0.1,4.9){\Large $a_{2}=\overline{a}_2$};
 	 		\node at (-3.7,-0.2){\Large $a_{3}=\overline{a}_3$};
 	 		\node at (3.7,-0.2){\Large $a_{4}=\overline{a}_4$};
 	 		
 	 		\node at (5.3,4.9){\Large $\overline{a}_{1}$};
 	 		\node at (-5.3,4.9){\Large $\overline{a}_{1}$};
 	 		\node at (0.5,-4.7){\Large $\overline{a}_{1}$};
 	 		
 	 		\node at (-3.0,4.9){\Large $\overline{a}_{12}$};
 	 		\node at (3.0,4.9){\Large $\overline{a}_{12}$};

 	 		\node at (4.8,2.0){\Large $\overline{a}_{14}$};
 	 		\node at (1.85,-2.4){\Large $\overline{a}_{14}$};
 	 		
 	 		\node at (-4.8,2.0){\Large $\overline{a}_{13}$};
 	 		\node at (-1.85,-2.4){\Large $\overline{a}_{13}$};

 	 		\end{tikzpicture}
 	 	\end{center}
 	 	\caption{By gluing edges in this figure that have identical pairs of labeled vertices,  we obtain a flag-no-square triangulation of the 2-sphere, which is the  boundary of the 3-complex $\mathcal{C}_{1133}$.}
 	 	\label{figure:C1133}
 	 \end{figure}

 	 Let \(c_4\) be the \(2\)-complex shown on the right side of Figure~\ref{figure:c6c4}, which is a subdivisition of a triangle with vertex set $\{a_2,a_3,a_4\}$. 	Consider the  cone  $\mathcal{C}_{4}$ based on the 2-complex  $c_4$  to $v_1$, the side surface of the cone  $\mathcal{C}_{4}$ is the triangulation $c_6$ of a 2-disk, shown on the left side of Figure~\ref{figure:c6c4}. 	We glue  the cone   $\mathcal{C}_{4}$ to $\mathcal{C}_{1133}$ along the 2-complex $c_6$, resulting  in  a triangulation of a 3-ball with 1137 tetrahedra, and we denote the 3-complex  by $\mathcal{C}_{1137}$. Here $c_6$ in $\mathcal{C}_{1133}$ is the 2-complex with vertex set $\{v_1, a_2, a_3, a_4, \overline{a}_{23},\overline{a}_{24},\overline{a}_{34}\}$ as shown in Figure  \ref{figure:C1133}.  The boundary of  $\mathcal{C}_{1137}$ thus  has four parts: three copies of  the 2-complex $c_{10}$,  and one copy of the 2-complex $c_4$. 	Moreover, $\overline{a}_1,a_2=\overline{a}_2,a_3=\overline{a}_3,a_4=\overline{a}_4$ serve as the \emph{genuine vertices} of  $\mathcal{C}_{1137}$. 	A sketch of the 3-complex $\mathcal{C}_{1137}$ is illustrated in Figure \ref{figure:C1137}.

 		\begin{figure}
 		\begin{center}
 			\begin{tikzpicture}
 			\node at (0,0) {\includegraphics[width=8cm,height=5cm]{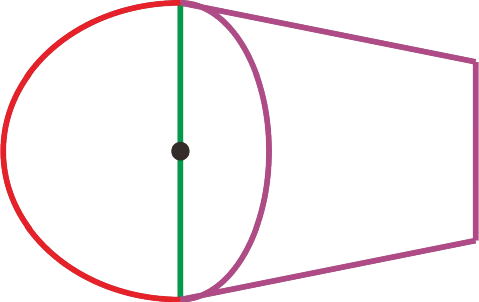}};

 			\node at (-1.4,0.1){\Large $v_{1}$};

 			\node at (2.8,0.0){\Large $\overline{\mathcal{C}}_{543}$};
 			\node at (0.00,0.4){ $\mathcal{C}_{590}$};

 			\node at (-2.25,-1.4){\Large $\mathcal{C}$};

 			\end{tikzpicture}
 		\end{center}
 		\caption{A cartoon of   the 3-complex $\mathcal{C}_{1137}$, the red arc  indicates the triangulation $c_4$, the green arc  indicates  the triangulation $c_6$, each of the purple arcs  indicates a triangulation  $c_{10}$, and  $\mathcal{C}$  indicates the cone $\mathcal{C}_{4}$.  There is a  similar cartoon for   the 3-complex $\mathcal{C}_{1163}$, replacing    $c_4$ by $c_{30}$,   and replacing  $\mathcal{C}_{4}$ by $\mathcal{C}_{30}$. }
 		\label{figure:C1137}
 	\end{figure}

 	Even through the 3-complex $\mathcal{C}_{1137}$ is not a flag-no-square triangulation of the 3-ball, but we have a control over  the empty simplices and empty squares in it.

 	 \begin{lemma} \label{lemma:fnsC1137simplex}  The \(3\)-complex \(\mathcal{C}_{1137}\) is simplicial, but is not flag. The only empty simplices in \(\mathcal{C}_{1137}\) are the triangles with the following vertex sets:
 	  \begin{itemize}
 	 	
 	 	\item  $\{a_{23},a_{24},w_{2,34}\}$;
 	 	
 	 		\item  $\{a_{24},a_{34},w_{4,23}\}$;
 	 		
 	 		\item  $\{a_{34},a_{23},w_{3,24}\}$

  		\end{itemize}
  	in $\mathcal{C}_{4} \cup \mathcal{C}_{590}\subset \mathcal{C}_{1137}=\mathcal{C}_{4} \cup \mathcal{C}_{590} \cup \overline{\mathcal{C}}_{543}$.

 \end{lemma}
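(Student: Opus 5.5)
We work with the decomposition $\mathcal{C}_{1137}=\mathcal{C}_{4}\cup_{c_6}\mathcal{C}_{1133}$, where $\mathcal{C}_{1133}=\mathcal{C}_{590}\cup_{c_{10}}\overline{\mathcal{C}}_{543}$ is already flag by Lemma~\ref{lemma:squareC1133}. The plan is to show that any clique of $\mathcal{C}_{1137}$ either spans a simplex or is one of the three listed triangles. We then check directly that those three triangles really are empty.

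\textbf{Simpliciality.} The cone $\mathcal{C}_4$ over $c_4$ with apex $v_1$ contributes no new vertices. All of its vertices $\{v_1,a_2,a_3,a_4,\overline a_{23},\overline a_{24},\overline a_{34}\}$ already lie on $c_6\subset\partial\mathcal{C}_{1133}$. We verify that no simplex of $\mathcal{C}_4$ duplicates a simplex of $\mathcal{C}_{1133}$, i.e.\ two distinct simplices never share a vertex set. The new edges are exactly the edges of $c_4$ interior to the disk $c_4$, namely $[\overline a_{23},\overline a_{24}]$, $[\overline a_{24},\overline a_{34}]$ and $[\overline a_{34},\overline a_{23}]$ (after the identification $a_{ij}=\overline a_{ij}$ along $c_{10}$). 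Each of these joins two vertices of $c_{10}$. Since $\overline{\mathcal{C}}_{543}$ is full in its copy of $\mathcal{C}_{600}$, it has no edge $[\overline a_{23},\overline a_{24}]$. In $\mathcal{C}_{590}$ these pairs are also non-adjacent: they lie at distance $2$ through $w_{2,34}$, $a_2$, $v_1$ in $\mathcal{C}_{600}$, and $\mathcal{C}_{600}$ is flag-no-square. So the new edges, triangles and tetrahedra are genuinely new, and $\mathcal{C}_{1137}$ is simplicial.

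\textbf{Classification of cliques.} Let $S$ be a clique in $\mathcal{C}_{1137}$. If $S$ uses none of the three new edges, it is a clique of $\mathcal{C}_{1133}$ and spans a simplex there. Otherwise $S$ contains, say, $\overline a_{23}$ and $\overline a_{24}$. Every further vertex of $S$ must be a common neighbour of both in $\mathcal{C}_{1137}$. We enumerate these common neighbours:
\begin{itemize}
\item inside $\mathcal{C}_4$: $v_1$, $a_2$ and $\overline a_{34}$;
\item inside $\mathcal{C}_{590}$: the common neighbours of $a_{23},a_{24}$, read off from the pentagon links of $\mathcal{C}_{600}$ as in the proof of Lemma~\ref{lemma:squareC590}, giving $a_2$, $w_{2,34}$ and $v_1$ (the last via the edges of $c_6$);
\item inside $\overline{\mathcal{C}}_{543}$: again $\overline a_2=a_2$ and $\overline w_{2,34}=w_{2,34}$.
\end{itemize}
Lemma~\ref{lemma:interiorC543} and Lemma~\ref{lemma:interiorC590} exclude interior common neighbours of two $c_{10}$-vertices, apart from the listed exceptional triples, and those exceptional triples do not involve the pair $\{a_{23},a_{24}\}$. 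Hence $S\subseteq\{\overline a_{23},\overline a_{24},v_1,a_2,\overline a_{34},w_{2,34}\}$.

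We then check the edges among these six vertices:
\begin{itemize}
\item $w_{2,34}$ is adjacent to $a_2$, but not to $v_1$ (this is the removed edge of $\mathcal{C}_{590}$) and not to $\overline a_{34}$ (they are non-adjacent in $c_{10}$, and fullness of $\overline{\mathcal{C}}_{543}$ applies).
\item $v_1$, $a_2$ and $\overline a_{34}$ span simplices together with the new edge inside the cone.
\end{itemize}
So the maximal cliques containing the new edge are $\{\overline a_{23},\overline a_{24},w_{2,34},a_2\}$, which spans a tetrahedron of $\mathcal{C}_{590}\cup\overline{\mathcal{C}}_{543}$ only after checking, and the cone simplices. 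We must determine which triangles here are filled. The triangles $\{a_{23},a_{24},a_2\}$ lie in $c_4$, while $\{a_{23},a_{24},w_{2,34}\}$ lies in no complex, since its edge $[a_{23},a_{24}]$ is new and $w_{2,34}\notin\mathcal{C}_4$. This makes $\{a_{23},a_{24},w_{2,34}\}$ an empty triangle. The clique $\{a_{23},a_{24},w_{2,34},a_2\}$ therefore contains an empty face, so it is not an empty tetrahedron in the strict sense defined earlier, where all facets must be present. The remaining triangles are symmetric.

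\textbf{Main obstacle.} The delicate step is the exhaustive common-neighbour count, i.e.\ certifying that no interior vertex of $\mathcal{C}_{590}$ or $\overline{\mathcal{C}}_{543}$ is adjacent to two vertices of $c_{10}$ that become adjacent via $c_4$. We handle it with Lemmas~\ref{lemma:interiorC543} and~\ref{lemma:interiorC590}, together with the explicit edge links in $\mathcal{C}_{600}$. The conclusion then follows by the threefold symmetry of $c_4$.
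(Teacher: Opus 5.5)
Your argument is correct, but it takes a different route from the paper. The paper never lists common neighbours. For a clique containing $a_{23},a_{24}$ and a further vertex $C_3\notin c_6$, it looks at the $4$-cycle $(C_3,a_{23},v_1,a_{24},C_3)$ in $\mathcal{C}_{1133}$. Either this cycle is an empty square, and then the classification in Lemma~\ref{lemma:squareC1133} forces $C_3=w_{2,34}$. Or $C_3$ is adjacent to $v_1$, and then the neighbourhood of $v_1$ in $\mathcal{C}_{590}$ forces $C_3=a_2\in c_6$, a contradiction. Cliques inside the vertex set of $\mathcal{C}_4$ are handled by flagness of the cone.

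You instead compute the common neighbours of $a_{23},a_{24}$ piece by piece. You use Lemmas~\ref{lemma:interiorC543} and~\ref{lemma:interiorC590} to exclude vertices off $c_{10}$, and the explicit structure of $c_{10}$ for the rest, arriving at $\{v_1,a_2,a_{34},w_{2,34}\}$. The paper's route is shorter because it recycles the square classification already proved for $\mathcal{C}_{1133}$. Yours needs only the earlier interior lemmas and makes the local picture explicit. It also exposes the $4$-clique $\{a_{23},a_{24},w_{2,34},a_2\}$, which the paper's assertion ``$C_3=w_{2,34}$ and $k=3$'' overlooks (one can take $C_4=a_2\in c_6$). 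As you note, this clique is harmless: its facet $\{a_{23},a_{24},w_{2,34}\}$ is missing, so it is not an empty tetrahedron in the paper's sense.

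A few points in your write-up should be tightened.
\begin{itemize}
\item The non-adjacency of $a_{23},a_{24}$ in $\mathcal{C}_{590}$ does not follow from ``distance $2$ \ldots\ and flag-no-square''; saying they are at distance $2$ already presupposes it. The right reason is that both lie in $\mathcal{C}_{543}$, which is full in $\mathcal{C}_{600}$, and they are non-adjacent in $c_{10}$.
\item The set $\{a_{23},a_{24},w_{2,34},a_2\}$ spans no tetrahedron at all, so the phrase ``spans a tetrahedron \ldots\ only after checking'' should be deleted.
\item Since $a_2$ and $a_{34}$ are not adjacent, the cone cliques through the new edge are exactly $\{a_{23},a_{24},v_1,a_2\}$ and $\{a_{23},a_{24},v_1,a_{34}\}$.
\item You should say why a vertex joined to $a_{23}$ and $a_{24}$ by edges from two different pieces is covered by your per-piece count. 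Such a vertex lies in $c_6$ or $c_{10}$. Moreover, $c_{10}$ is full in both $\mathcal{C}_{590}$ and $\overline{\mathcal{C}}_{543}$, and the only edges of $\mathcal{C}_{1133}$ among the vertices of $c_6$ are those of $c_6$.
\item The ``pentagon links'' are not the right source for common neighbours of a non-adjacent pair. Your appeal to the interior lemmas is what actually does the work.
\end{itemize}
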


  \begin{proof}

  	First,  we note that $a_i=\overline{a}_i$ and $a_{ij}=a_{ji}=\overline{a}_{ij}=\overline{a}_{ji}$ as vertices of  	$\mathcal{C}_{1137}$ for $i, j \in \{2,3,4\}$. Moreover, the  \(3\)-complexes   $\mathcal{C}_{1137}$ and  $\mathcal{C}_{1133}$ have the same vertex set, but
  	 \begin{itemize}
  		
  		\item 
  		$\mathcal{C}_{1137}$ have three more open edges than  $\mathcal{C}_{1133}$, say $$(a_{23},a_{24}), (a_{23},a_{34}), (a_{24},a_{34});$$ 
  		
  			\item 	  $\mathcal{C}_{1137}$ have four more open 2-simplexes than  $\mathcal{C}_{1133}$;
  			
  			\item   $\mathcal{C}_{1137}$ have four more open 3-simplexes than  $\mathcal{C}_{1133}$.
  		
  		 \end{itemize}
  	Since the \(3\)-complexes \(\mathcal{C}_{543}\),  \(\mathcal{C}_{590}\) and \(\mathcal{C}_{1133}\) are simplicial complexes, one can then check directly that the \(3\)-complex \(\mathcal{C}_{1137}\) is a simplicial complex.

  	There are edges between any two vertices of \(\{a_{23}, a_{24}, w_{2,34}\}\) in $\mathcal{C}_{1137}$; however, there is no \(2\)-simplex in \(\mathcal{C}_{1137}\) with this vertex set. Thus, there is an empty triangle with vertex set \(\{a_{23}, a_{24}, w_{2,34}\}\).
    Similarly, there are two more  empty triangles claimed in Lemma \ref{lemma:fnsC1137simplex}.

  	Suppose $\{C_1, C_2, \cdots, C_k\}$ is a collection of vertices in $$\mathcal{C}_{1137} = \mathcal{C}_{1133} \cup_{c_{6}} \mathcal{C}_{4}$$ that are pairwise connected by edges. Since $\mathcal{C}_{1133}$ is flag by Lemma \ref{lemma:squareC1133}, we may assume that at least one edge among $\{C_1, \cdots, C_k\}$ is one of the three new edges in $\mathcal{C}_{1137}$. Thus, we assume $C_1 = a_{23}$ and $C_2 = a_{24}$.	
  			From the construction of  $\mathcal{C}_{1133}=\mathcal{C}_{590}\cup_{c_{10}} \overline{\mathcal{C}}_{543}$, there is no edge in  $\mathcal{C}_{1133}$ between any pair in  $\{a_{23}, a_{24},a_{34}\}$:
  				\begin{itemize}
  				\item  
  				 If there is a vertex, say $C_3$,  in $\{C_3, C_4,\cdots,  C_k\}$ which lies in $\mathcal{C}_{1133} \setminus c_{6}$, then $(C_3,C_1,v_1, C_2,C_3)$ is a square in $\mathcal{C}_{1133}$:
  		
  		 \begin{enumerate}
  			
  			\item Either $(C_3,C_1,v_1, C_2,C_3)$ is an empty square in $\mathcal{C}_{1133}$. Then by Lemmas  \ref{lemma:squareC590} and \ref{lemma:squareC1133}, and we must have $C_3=w_{2,34}$ and $k=3$. We have  the  empty triangle  with vertex set 	$\{a_{23},a_{24}, w_{2,34}\}$ claimed in Lemma \ref{lemma:fnsC1137simplex};

  				\item Or there is an edge \([C_3, v_1]\) in \(\mathcal{C}_{1133}\). Then there are edges between \(C_3\) and each of \(\{v_1, a_{23}, a_{24}\}\) in \(\mathcal{C}_{1133}\). Since $v_1$ lies in the interior of $\mathcal{C}_{590}$, but not in $\overline{\mathcal{C}}_{543}$.
  				 From the construction of \(\mathcal{C}_{1133} = \mathcal{C}_{590} \cup \overline{\mathcal{C}}_{543}\), there must be edges between \(C_3\) and each of \(\{v_1, a_{23}, a_{24}\}\) in \(\mathcal{C}_{590} \subset \mathcal{C}_{1133}\). Consider the \(1\)-neighborhood of \(v_1\) in \(\mathcal{C}_{600} \supset \mathcal{C}_{590}\); we have that \(C_3\) must be \(a_2\),  and \(k = 3\). But then \(C_3 = a_2 \in c_6\), a contradiction.

  			 \end{enumerate}

\item   Otherwise,   all the vertices $\{C_1, C_2,\cdots,  C_k\}$ lie on $\mathcal{C}_{4}$. We have   $k$ is at most 4 and $\{C_1, C_2,\cdots,  C_k\}$ spans a $(k-1)$ simplex in  $\mathcal{C}_{4}$. In turn $\{C_1, C_2,\cdots,  C_k\}$ spans a $(k-1)$ simplex in  $\mathcal{C}_{1137}$. 	We note that there is a 2-simplex in $\mathcal{C}_{1137}$  with vertex set $\{a_{23},a_{24},a_{2}\}$, so it is not an empty simplex.

  	\end{itemize}
  
  Similarly, we also have the empty triangles in $\mathcal{C}_{1137}$ with vertex  sets $\{a_{24},a_{34},w_{4,23}\}$ and 
  $\{a_{34},a_{23},w_{23,24}\}$, respectively.
 In conclusion, there are no other empty simplices in $\mathcal{C}_{1137}$ except the three empty triangles  stated in the lemma.
  	
  	 \end{proof} 
   
   Lemmas~\ref{lemma:triplevertices} and~\ref{lemma:1neighborhood} will be used in the proof of Lemma~\ref{lemma:fnsC1137square} when studying the empty squares in \(\mathcal{C}_{1137}\).

 	\begin{lemma} \label{lemma:triplevertices}  
 		
 		Consider the triple of vertices  $\{E,G,F\}$ in $\mathcal{C}_{600}^{(0)}$ such that $[E,G]$ and $[G,F]$ are edges in $\mathcal{C}_{600}^{(1)}$. Up to the automorphism group of the 3-complex $\mathcal{C}_{600}$, there are exactly three equivalence classes of such triples.
 	  	\end{lemma}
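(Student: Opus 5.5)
The plan is to classify such triples by the position of the pair $\{E,F\}$ inside the link of the middle vertex $G$. Throughout, $G$ is the distinguished middle vertex and $E\neq F$. The answer will be three classes, according to whether $E,F$ are adjacent, at distance $2$, or antipodal in the icosahedral link of $G$.

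\emph{Step 1 (reduce to a fixed $G$).} The automorphism group of $\mathcal{C}_{600}$ is the Coxeter group of type $H_4$, of order $14400$, and it acts transitively on the $120$ vertices. So I may fix $G$ once and for all. Two triples with middle vertex $G$ are then equivalent if and only if they are related by an element of the stabilizer $\mathrm{Stab}(G)$. By the orbit–stabilizer theorem this stabilizer has order $14400/120=120$; it is the vertex stabilizer $H_3$ in $H_4$.

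\emph{Step 2 (identify the stabilizer action).} The vertices $E,F$ lie in the link $\mathrm{lk}(G)$, which is an icosahedron. Because $\mathcal{C}_{600}$ is flag, the link is a full subcomplex. Hence $[E,F]$ is an edge of $\mathcal{C}_{600}$ exactly when $E,F$ are adjacent in the icosahedron. Restriction gives a homomorphism $\mathrm{Stab}(G)\to \mathrm{Aut}(\text{icosahedron})$. I would argue it is an isomorphism onto the full icosahedral group of order $120$. It is injective because an automorphism of the regular polytope fixing a vertex and its whole link fixes a full flag, hence is the identity. It is then surjective because both groups have order $120$. I expect this step to be the main obstacle, namely making it rigorous and not merely a citation of $H_3\subset H_4$. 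I would first try the flag argument: $\mathrm{Aut}(\mathcal{C}_{600})$ acts simply transitively on flags, so the stabilizer of $G$ acts simply transitively on the flags of $\mathrm{lk}(G)$.

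\emph{Step 3 (orbits on pairs and distinctness).} The pairs of distinct vertices of the icosahedron are classified by graph distance $d\in\{1,2,3\}$. The icosahedral group is vertex transitive, so fix $E$. The stabilizer of $E$ is the dihedral group of order $10$. It acts transitively on the five neighbours of $E$ and on the five vertices at distance $2$, and it fixes the antipode. Therefore the orbits of ordered pairs $(E,F)$ are exactly the three classes $d=1,2,3$. These classes are inequivalent because any automorphism carrying one triple to another maps the middle vertex to the middle vertex. It therefore restricts to an isomorphism of links and preserves the link distance $d$. This gives exactly three equivalence classes:
\begin{itemize}
\item[(i)] $d=1$: $\{E,G,F\}$ spans a triangle of $\mathcal{C}_{600}$;
\item[(ii)] $d=2$: $[E,F]$ is not an edge, and $E,F$ have two common neighbours in $\mathrm{lk}(G)$;
\item[(iii)] $d=3$: $E,F$ are antipodal in $\mathrm{lk}(G)$.
\end{itemize}
If degenerate triples with $E=F$ were allowed, they would form a single extra class by vertex transitivity. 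I would remark that the statement intends $E\neq F$.
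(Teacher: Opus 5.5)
Your proof is correct and follows the paper's route: fix the middle vertex $G$ using the regularity of $\mathcal{C}_{600}$ (the paper takes $G=a_{24}$), pass to its icosahedral link (the paper phrases this dually, via pentagonal faces of a dodecahedral facet of $\mathcal{Y}_{120}$), and sort $\{E,F\}$ into adjacent, distance-$2$ and antipodal pairs; your flag-transitivity argument identifying $\mathrm{Stab}(G)$ with the full icosahedral group supplies the step the paper leaves as ``easy to see''. Your distance classification also exposes an apparent slip in the paper's list of representatives: the paper layers $\mathrm{lk}(a_{24})$ as $\{a_2,w_{4,23},w_{4,12}\}$, $\{w_{2,34},y_3,w_{2,14}\}$, $\{y_1,y_2,y_4\}$, and with that layering the antipode of $a_2$ is $y_3$, so $\{a_2,a_{24},y_4\}$ lies in the same distance-$2$ class as $\{a_2,a_{24},w_{4,23}\}$ and the third representative should presumably be $\{a_2,a_{24},y_3\}$.
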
  
 	  		 	\begin{proof} It is well-known that  $\mathcal{C}_{600}$ is a regular polytope. We may fix $G$ be any vertex  in $\mathcal{C}_{600}^{(0)}$, say $a_{24}$, in $\mathcal{C}_{543}^{(0)} \subset \mathcal{C}_{600}^{(0)}$.
 	  		 		Note that in  the duality between  $\mathcal{C}_{600}$ and $\mathcal{Y}_{120}$, the vertex  $a_{24}$ corresponds to the center of  a regular dodecahedron  facet of $\mathcal{Y}_{120}$, and then the edges $[E,a_{24}]$ and $[a_{24}, F]$ correspond to two pentagon facets of the  dodecahedron. 	 In  Figure 	\ref{figure:a24linkin543}, we show the dual of the  dodecahedron corresponds to the vertex $a_{24}$, which is also the boundary of the closed 1-neighborhood of $a_{24}$ in $\mathcal{C}_{600}^{(1)}$, say an icosahedron.  
 	  		 		Then it is easy to see that, up to the automorphism group of \(\mathcal{C}_{600}\), \(\{E, G, F\}\) is equivalent to one of the following:
 	  		 			\begin{itemize}
 	  		 			
 	  		 			\item
 	  		 		  $\{a_2, a_{24}, w_{2,34}\}$;
 	  		 		  
 	  		 		  	\item 
 	  		 		  	  $\{a_2, a_{24}, w_{4,23}\}$;
 	  		 		  	  
 	  		 		  	  \item 	  $\{a_2, a_{24}, y_4\}$
 	  		 			\end{itemize}
 	  		 			 in Figure 	\ref{figure:a24linkin543}.

 	  		 		 	\end{proof}

 	  	  	\begin{lemma} \label{lemma:1neighborhood}
 	  	  		  For the vertices $v_2$ and $a_{24} $ in $ \mathcal{C}_{57}\subset \mathcal{C}_{600}$, 
 	  	  consider the boundaries  $\partial N_1(v_2)$ and $\partial N_1(a_{24})$  of the 1-neighborhoods of  $v_2$ and $a_{24}$ in   $\mathcal{C}_{600}$. Then $$\partial N_1(v_2)^{(0)}\cap \partial N_1(a_{24}) ^{(0)}=\{a_4,v_1,v_3\}.$$ Moreover, there are exactly three pairs of vertices $(E,G)$ in $$ (\partial N_1(v_2)^{{(0)}} \setminus \{a_4,v_1,v_3\}) \times  (\partial N_1(a_{24}) ^{(0)} \setminus \{a_4,v_1,v_3\}),$$ say $(E,G)$ be one of $(v_4,a_2)$, $(a_{34},w_{4,23})$  and $(a_{14},w_{4,12})$,  such that the distances in  $\mathcal{C}_{600}^{(1)}$ satisfy:
 	  	 		\begin{itemize}
 	  	 		
 	  	 		\item $\text{dist} (E,G)=1$;

 	  	 			\item $\text{dist} (E, a_{24}) \geq 2$;
 	  	 			
 	  	 			\item $\text{dist}(G, v_{2}) \geq 2$.
 	  	 			
 	  	 			\end{itemize}

 	  	 \end{lemma}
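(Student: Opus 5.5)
The plan is to reduce everything to an explicit comparison of two icosahedra in $\mathcal{C}_{600}$. Since every vertex link of $\mathcal{C}_{600}$ is an icosahedron, $\partial N_1(v_2)$ and $\partial N_1(a_{24})$ each have exactly $12$ vertices. I will list both vertex sets using the construction of $\mathcal{C}_{57}$ from $\mathcal{C}_{17}$ and the four cones over special subdivisions.

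\textbf{Step 1: the link of $v_2$.} The vertex $v_2$ lies in the interior of $\mathcal{C}_{57}$. It is adjacent to $v_1,v_3,v_4$ and to the nine vertices of the copy of $c_{10}$ subdividing the face $\{a_1,a_3,a_4\}$, namely $a_1,a_3,a_4,a_{13},a_{14},a_{34},w_{1,34},w_{3,14},w_{4,13}$. This is exactly the count already used in the proof of Lemma~\ref{lemma:interiorC590}.

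\textbf{Step 2: the link of $a_{24}$.} This vertex lies on $\partial\mathcal{C}_{543}$, so its link splits into a part in $\mathcal{C}_{57}$ and a part in $\mathcal{C}_{543}$. From Figure~\ref{figure:C57} and the cones, the $\mathcal{C}_{57}$-part consists of the following vertices:
\begin{itemize}
\item the cone points $v_1$ (over $\{a_2,a_3,a_4\}$) and $v_3$ (over $\{a_1,a_2,a_4\}$);
\item the neighbours of $a_{24}$ inside the two copies of $c_{10}$ containing it, namely $a_2,a_4,w_{2,34},w_{4,23}$ and $w_{2,14},w_{4,12}$.
\end{itemize}
The remaining vertices lie in the interior of $\mathcal{C}_{543}$; these are the vertices such as $y_4$ appearing in Figure~\ref{figure:a24linkin543}. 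A consistency check is that the total must be $12$.

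\textbf{Step 3: the intersection.} Intersecting the two lists gives $\{a_4,v_1,v_3\}$, which is the first claim. Here $a_{24}$ is not adjacent to $v_2$ since it is not in the relevant $c_{10}$. Hence $\mathrm{dist}(v_2,a_{24})=2$, and a common neighbour count of $3$ is consistent with the regularity of the $600$-cell.

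\textbf{Step 4: the three pairs.} For $E\in\partial N_1(v_2)^{(0)}\setminus\{a_4,v_1,v_3\}$, the conditions $\mathrm{dist}(E,a_{24})\ge 2$ and $\mathrm{dist}(G,v_2)\ge2$ hold automatically once $E,G$ are outside the intersection. So the second claim reduces to listing all edges between the two $9$-element sets
\[
S=\{v_4,a_1,a_3,a_{13},a_{14},a_{34},w_{1,34},w_{3,14},w_{4,13}\}
\]
and $T=\partial N_1(a_{24})^{(0)}\setminus\{a_4,v_1,v_3\}$.

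Since $S\subset\mathcal{C}_{57}$ and $S\cap\mathcal{C}_{543}$ lies on $\partial\mathcal{C}_{543}$, I will use the fullness of $\mathcal{C}_{543}$ and Lemma~\ref{lemma:fnsC543}. These show that interior vertices of $\mathcal{C}_{543}$ in $T$ can only be adjacent to boundary vertices in $S$. That check is carried out on Figure~\ref{figure:a24linkin543} together with Figure~\ref{figure:C57}.

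For the $\mathcal{C}_{57}$-part of $T$, the edges come directly from the cone structure:
\begin{itemize}
\item $v_4$ is joined to $a_2$, because $a_2$ lies in the base of the cone at $v_4$;
\item $a_{34}$ is joined to $w_{4,23}$ inside the $c_{10}$ on $\{a_2,a_3,a_4\}$;
\item $a_{14}$ is joined to $w_{4,12}$ inside the $c_{10}$ on $\{a_1,a_2,a_4\}$.
\end{itemize}
I will then verify that no other edges occur. For instance, $a_1$ and $a_3$ are not adjacent to any vertex of $T$, by the flag-no-square property: an extra edge would create an empty square through $v_2$ and $a_{24}$, or force a common neighbour outside $\{a_4,v_1,v_3\}$.

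The main obstacle is Step 4 for the vertices of $T$ interior to $\mathcal{C}_{543}$. I would handle it by an empty-square argument. If such a $G$ were adjacent to some $E\in S$, the $4$-circle $(v_2,E,G,a_{24},v_2)$ would have to have a diagonal in $\mathcal{C}_{600}$, since $\mathcal{C}_{600}$ is flag-no-square. Either diagonal, $[v_2,G]$ or $[E,a_{24}]$, contradicts $E,G$ lying outside the intersection. This argument rules out every edge $E$--$G$. It is therefore too strong as stated, and I must re-examine it.

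Indeed, the three listed pairs give $4$-circles $(v_2,E,G,a_{24})$ that are not empty only through the relevant distance conventions. So I would compute the three pairs directly from the figures rather than rely on this shortcut.
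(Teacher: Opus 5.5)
Your Steps 1--3 agree with the paper. You list the same two twelve-element links and get the same intersection $\{a_4,v_1,v_3\}$, and the three edges $[v_4,a_2]$, $[a_{34},w_{4,23}]$, $[a_{14},w_{4,12}]$ are correctly found. Your remark that the two distance conditions hold automatically once $E,G$ avoid the intersection is also right.

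\textbf{The gap.} The uniqueness half of Step 4, which is the real content of the lemma, is not proved. You treat $(v_2,E,G,a_{24},v_2)$ as a $4$-circle, but you showed in Step 3 that $v_2$ and $a_{24}$ are \emph{not} adjacent. So $v_2$--$E$--$G$--$a_{24}$ is only a path of length three. Closing it through a common neighbour gives a $5$-cycle, and the flag-no-square property of $\mathcal{C}_{600}$ says nothing about $5$-cycles, since every edge link is a pentagon. For example, the cycle $(v_2,v_4,a_2,a_{24},a_4,v_2)$ is exactly the pentagonal link of the edge $[v_1,v_3]$. That is why your argument ``proves too much''; it has nothing to do with distance conventions.

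The same invalid square is what you invoke to exclude edges at $a_1$ and $a_3$. Your fallback that such an edge would ``force a common neighbour outside $\{a_4,v_1,v_3\}$'' is also unjustified: an edge $E$--$G$ creates no new common neighbour of $v_2$ and $a_{24}$. ``Compute directly from the figures'' does not close the gap either. Figure~\ref{figure:a24linkin543} shows only the link of $a_{24}$. The hard case is whether the interior vertices $y_1,\dots,y_4$ are adjacent to $a_1,a_3,w_{4,13},a_{13},w_{1,34},w_{3,14}$, and no figure shows that.

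A second missing ingredient concerns boundary--boundary pairs. Fullness of $\mathcal{C}_{543}$ in $\mathcal{C}_{600}$ does not exclude a chord of $\mathcal{C}_{543}$ joining two vertices of $\partial\mathcal{C}_{543}$. For that you need $\partial\mathcal{C}_{543}$ to be full in $\mathcal{C}_{543}$ (Lemma~2.10 of \cite{PrzytyckiS:2009}).

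\textbf{How the paper closes it.} The paper does an explicit case analysis:
\begin{itemize}
\item It uses a $\mathbb{Z}_3$-symmetry fixing $v_2$ and $a_{24}$ and permuting $\{a_4,v_1,v_3\}$, and sorts the nine remaining vertices of each link into three layers.
\item Boundary--boundary pairs are settled by distances in $\mathcal{C}_{57}^{(1)}$ together with fullness of $\partial\mathcal{C}_{543}$ in $\mathcal{C}_{543}$.
\item Pairs of $v_4$ with a $y_i$ are excluded because there are no edges between the interiors of $\mathcal{C}_{57}$ and $\mathcal{C}_{543}$.
\item Edges such as $[a_1,y_1]$ are ruled out by exhibiting a \emph{genuine} $4$-circle through other vertices, e.g.\ $(a_1,y_1,a_2,v_3,a_1)$, which would be empty.
\item The case of $a_{13}$ against $y_2,y_4$ uses the pentagonal edge link of $[y_2,y_4]$.
\end{itemize}

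Alternatively, you can make Step 4 rigorous by a direct coordinate check. Take the $600$-cell vertices to be the unit quaternions of the binary icosahedral group, with adjacency meaning inner product $\phi/2$, and put $v_2=(1,0,0,0)$, $a_{24}=\tfrac12(1,1,1,1)$. One then finds exactly three edges between the two nine-element sets. Some argument of one of these kinds is needed; the one in your proposal does not work.
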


 	  	 \begin{proof}
 	  	 	First note that $ \partial N_1(v_2)^{{(0)}}$ is the set of twelve vertices $$\{a_1,a_3,a_4, a_{13},a_{34},a_{14}, w_{1,34},w_{3,14},w_{4,13},v_1,v_3,v_4\},$$
 	  	 	and 
 	  	 	 $ \partial N_1(a_{24})^{(0)}$  is the set of twelve vertices $$\{v_1,a_2,v_3, a_4,w_{2,14},w_{2,34},w_{4,23},w_{4,12}, y_1,y_2,y_3,y_4\}.$$ 
Where $y_i$ lies in the interior of  $\mathcal{C}_{543}$, we label them as in  Figure 	\ref{figure:a24linkin543}. 
So $\partial N_1(v_2)^{(0)}\cap \partial N_1(a_{24}) ^{(0)}$ is the set $\{a_4,v_1,v_3\}$. 
  There are two tetrahedra in $\mathcal{C}_{600}$ with vertex sets  $\{v_2,a_4,v_1,v_3 \}$ and  $\{a_{24},a_4,v_1,v_3 \}$, respectively. 
  	 	 Since $\mathcal{C}_{600}$ is the boundary of a regular polytope, there exists a $\mathbb{Z}_3$-symmetry $f$ of  $\mathcal{C}_{600}$ that fixes $v_2$ and acts non-trivially  on the elements of $\{a_4,v_1,v_3\}$. 
  	 	 In turn, this map $f$ also fixes $a_{24}$.
 From Figure 	\ref{figure:a24linkin543}, we divide the nine vertices $\partial N_1(a_{24}) ^{(0)} \setminus \{a_4,v_1,v_3\}$  into three layers according the distances of them  in $\partial N_1(a_{24}) ^{(0)}$ to the set $\{a_4,v_1,v_3\}$:
  	 		\begin{itemize}

  	 			\item  $\{a_2,w_{4,23},w_{4,12}\}$;
  	 			
  	 		\item  $\{w_{2,34},y_3, w_{2,14}\}$;
  	 		
  	 			\item  $\{y_1,y_{2}, y_{4}\}$.

  	 		\end{itemize}
  	 		Similarly, we divide the nine vertices $\partial N_1(v_{2}) ^{(0)}  \setminus \{a_4,v_1,v_3\}$  into three layers according the distances of them  in $\partial N_1(v_{2}) ^{(0)}$ to the set $\{a_4,v_1,v_3\}$:
  	 		\begin{itemize}

  	 		\item  $\{v_4,a_{34},a_{14}\}$;
  	 		
  	 		\item  $\{a_{1},a_3, w_{4,13}\}$;
  	 		
  	 		\item  $\{w_{3,14}, w_{1,34}, a_{13}\}$.

  	 	\end{itemize}

 	  	 	Let  $(E,G)$ be the pair  in $$ (\partial N_1(v_2)^{(0)} \setminus \{a_4,v_1,v_3\}) \times  (\partial N_1(a_{24})^{(0)} \setminus \{a_4,v_1,v_3\}),$$ that satisfies the distance conditions in  Lemma \ref{lemma:1neighborhood}:

 	  	 		\begin{enumerate}

 	  	 	\item 	
 	  	 	
 	  	 		\begin{itemize}

 	  	 				\item 
 	  	 				
 	  	 				If $E=v_4 \in \mathcal{C}_{57}^{(0)}$ and $G$ is any element of $$\{a_2,w_{2,14},w_{2,34},w_{4,23},w_{4,12} \}\subset \partial \mathcal{C}_{57}^{(0)}=\partial \mathcal{C}_{543}^{(0)}.$$  From an elementary observation, the distance between $E$ and $G$ in $\mathcal{C}_{57}^{(1)}$ is equal to 2, except for the pair $(E,G) =(v_4,a_2)$, whose distance in $\mathcal{C}_{57}^{(1)}$ is 1.
 	  	 				Since $\mathcal{C}_{600}$  is a triangulation of the 3-sphere, the underlying spaces of the subcomplexes  $\mathcal{C}_{57}$  and $\mathcal{C}_{543}$ are embedded 3-balls with identified boundaries. There is no edge in $\mathcal{C}_{543}$   connecting the vertex  $v_4$ to $w_{2,14}$ (or similarly to  $w_{2,34}$, $w_{4,23}$ and $w_{4,12}$, respectively).
 	  	 				Thus, the distance between $v_4$ and $G$   in  $ \mathcal{C}_{600}$ coincides with that in $\mathcal{C}_{57}^{(1)}$: it equal $2$ in all cases, except for the pair $(v_4,a_2)$, which has distance 1.
 	  	 				Now it is  straightforward to check that $(E,G)=(v_4,a_2)$  satisfies the distance conditions in  Lemma \ref{lemma:1neighborhood};

 	  	 	\item 	 If $E=v_4$ and $G$ is any element of $\{y_1,y_{2},y_{3},y_{4} \}$ that lies in the interior of $\mathcal{C}_{543}$, then the distance between them in $\mathcal{C}_{600}$ is at least 2;

 	  	 	\item 		By the $\mathbb{Z}_3$-symmetry $f$,  the pairs $(E,G)=(a_{34},w_{4,23})$  and $(E,G)=(a_{14}, w_{4,12})$ also satisfy the distance conditions in  Lemma \ref{lemma:1neighborhood};
 	  	 		
 	  	 				\end{itemize}
 	  	 					
 	  	 \item

 	  	 		\begin{itemize}

 	  	 		\item If $E$ is any of $$\{a_1,a_3,w_{4,13}\} \subset \partial \mathcal{C}_{57}^{(0)}= \partial \mathcal{C}_{543}^{(0)}$$
 	  	 		and $G$ is any of  $$\{w_{2,34},w_{2,14} \}\subset \partial \mathcal{C}_{57}^{(0)}= \partial \mathcal{C}_{543}^{(0)}.$$ 
 	  	 		By observation, the distance between $E$ and $G$ in $\mathcal{C}_{57}^{(1)}$ at least  $2$. Since  $\mathcal{C}_{600}$ is a triangulation of the 3-sphere, there is no edge connecting any vertex $v_i$ ($i=1,2,3,4$) in the interior of  $ \mathcal{C}_{57}$ to a vertex in the interior of  $ \mathcal{C}_{543}$.   By Lemma 2.10 in \cite{PrzytyckiS:2009}, $\partial \mathcal{C}_{543}$  is a full subcomplex of $\mathcal{C}_{543}$.   Since $E$  and $G$  lie in $\partial \mathcal{C}_{57}=\partial \mathcal{C}_{543}$, the distance between  $E$  and $G$ in $ \mathcal{C}_{600}$ coincides with that in  $\mathcal{C}_{57}^{(1)}$: it is  at least 2  in all cases.

 	  	 	\item 	By the $\mathbb{Z}_3$-symmetry $f$, 	if $E$ is any of $\{a_1,a_3,w_{4,13}\} $
 	  	 		and $G=y_{3}$,  the distance between  $E$  and $G$ in $ \mathcal{C}_{600}$ is  at least 2;

 	  	 			\item

 	  	 				If $E=a_1 $, $G=y_1$ and $\text{dist} (a_1, y_{i})=1$; From Figure \ref{figure:a24linkin543}, there is an empty square $(a_1,y_1,a_2,v_3,a_1)$ in $\mathcal{C}_{600}$, which is a contradiction;
 	  	 		
 	  	 			If $E=a_3 $, $G=y_1$ and  $\text{dist} (a_3, y_{1})=1$; Then there is an empty square $(a_3,y_1,a_{24},v_1,a_3)$ in $\mathcal{C}_{600}$, which is a contradiction;
 	  	 			
 	  	 			If $E=w_{4,13}$, $G=y_1$  and  $\text{dist} (w_{4,13}, y_{1})=1$; Then there is an empty square $(w_{4,13},y_4,a_{24},v_1,w_{4,13})$ in $\mathcal{C}_{600}$, which is a contradiction;

 	  	 				\item  	
 	  	 					By the $\mathbb{Z}_3$-symmetry $f$, the distance between  $E \in \{a_1,a_3,w_{4,13}\}$  and $G \in \{y_2,y_4\}$ is  at least 2;

 	  	 	\end{itemize}

 	 \item 	 \begin{itemize}

 	  	 		\item 	If $E=a_{13}$,  $G=y_1$, and  $\text{dist} (a_{13}, y_{1})=1$. From Figure \ref{figure:a24linkin543},  there is an empty square $(a_{13},y_1,a_2,v_4,a_{13})$ in $\mathcal{C}_{600}$, which is a contradiction;
 	  	 		
 	  	 		\item 	If $E=a_{13}$ and $G \in \{y_2,y_4\}$. We consider the 1-neighborhood of $a_{13}$ and $a_{24}$ in $\mathcal{C}_{600}$.
   	 			 $ \partial N_1(a_{13})^{(0)}$  is the set of twelve vertices $$\{a_1,v_2,a_3, v_4,w_{1,34},w_{1,23},w_{3,14},w_{3,12}, z_1,z_2,z_3,z_4\}.$$ 
   	 			Where $z_i$ lies in the interior of  $\mathcal{C}_{543}$, comparing to  Figure 	\ref{figure:a24linkin543} for the link of $a_{24}$.
   	 			We label  $z_i$ such that there is an edge $[z_i,z_j]$ in $ \partial N_1(a_{13})$  except when $\{i,j\}=\{1,3\}$, just as there is an edge $[y_i,y_j]$ in $ \partial N_1(a_{24})$  except when$\{i,j\}=\{1,3\}$.
   	 			We consider  $\partial N_1(a_{13}) \cap \partial N_1(a_{24})$. By all the argument above,  $y_1$ and $y_3$ ($z_1$ and $z_3$, respectively) do not lie in $\partial N_1(a_{13}) \cap \partial N_1(a_{24})$.  There is an order two  symmetry $g$  of $\mathcal{C}_{600}$ such that $g(v_1)=v_3$ and $g(v_2)=v_4$, which induces an order two isomorphism  on $\partial N_1(a_{24})$ such that $g(y_2)=y_4$. If $y_2 \in \partial N_1(a_{13}) \cap \partial N_1(a_{24})$.
   	 			By the  symmetry $g$,  $y_4 \in \partial N_1(a_{13}) \cap \partial N_1(a_{24})$, and $\{y_2,y_4\}=\{z_2,z_4\}$. 
   	 			But then there are at least six triangles in  $\mathcal{C}_{600}$  with one  edge $[y_2,y_4]$ and the remaining vertices $z_1,y_1,a_{24},z_3,y_3$ and $a_{13}$.
   	 	Which is a contradiction that the edge link in  $\mathcal{C}_{600}$  of any edge is a pentagon. 	So $\partial N_1(a_{13}) \cap \partial N_1(a_{24})=\emptyset$. We have  $\text{dist} (a_{13}, y_{i})\geq 2$ for $i=2,4$;

 	  	 			\item 	By the $\mathbb{Z}_3$-symmetry $f$, the distance between  $E \in \{w_{1,34},w_{3,14}\}$  and $G \in \{y_1,y_2,y_4\}$ is  at least 2.

 	  	 			\end{itemize}

 	  	 		\end{enumerate}

 In conclusion, there are exactly three pairs $(E,G)$ satisfying the distance conditions in Lemma \ref{lemma:1neighborhood}, as claimed.

 	  	 		 \end{proof}

 	\begin{lemma} \label{lemma:fnsC1137square}

 	The 3-complex $\mathcal{C}_{1137}=\mathcal{C}_{4} \cup \mathcal{C}_{590}\cup \overline{\mathcal{C}}_{543} $ is not square-free; there are exactly twelve empty squares in $\mathcal{C}_{1137}$:

 		 	\begin{enumerate}[label=(\arabic*).]
 				
 				\item They lie in  $\mathcal{C}_{4} \cup (\mathcal{C}_{590} \cap \overline{\mathcal{C}}_{543})$:
 				
 				\begin{enumerate}[label=(\arabic{enumi}.\arabic*)]
 					
 					\item  \label{fnsC1137square11} $(a_{23},a_{24},w_{4,23},w_{3,24},a_{23})$;
 					
 					\item   \label{fnsC1137square12} $(a_{24},a_{34},w_{3,24},w_{2,34}, a_{24})$;
 					
 					\item   \label{fnsC1137square13} $(a_{34},a_{23},w_{2,34},w_{4,23}, a_{34})$.
 				\end{enumerate}

 				\item They lie in  $\mathcal{C}_{4} \cup \overline{\mathcal{C}}_{543} \subset  \mathcal{C}_{1137}$, but are not entirely contained  in    $\mathcal{C}_{4} \cup (\mathcal{C}_{590} \cap \overline{\mathcal{C}}_{543})$:

 			\begin{enumerate}[label=(\arabic{enumi}.\arabic*)]
 					
 					\item   \label{fnsC1137square21} $(a_{23},a_{24},\overline{y}_{1},\overline{x}_{1},a_{23})$;
 					
 					\item   \label{fnsC1137square22} $(a_{24},a_{34},\overline{y}_{3}, \overline{x}_{3}, a_{24})$;
 					
 					\item   \label{fnsC1137square23} $(a_{34},a_{23},\overline{m}_{1},\overline{n}_{1}, a_{34})$.
 				\end{enumerate}
 			Here, \(\overline{y}_{1}, \overline{x}_{1}, \overline{y}_{3}, x_{3}, \overline{m}_{1}, \overline{n}_{1}\) are distinct vertices in the interior of  \(\overline{\mathcal{C}}_{543} \subset \overline{\mathcal{C}}_{600}\) that do not belong to \(\overline{\mathcal{C}}_{57}\).
 			 The vertices \(y_1\) and \(y_3\) in \(\mathcal{C}_{543}\) are labeled as in Figure~\ref{figure:a24linkin543}; hence, the vertices \(\overline{y}_1\) and \(\overline{y}_3\) in \(\overline{\mathcal{C}}_{543}\) are labeled similarly.

 				\item 	They  lie in  $\mathcal{C}_{4} \cup \mathcal{C}_{590} \subset  \mathcal{C}_{1137}$, but are not entirely contained  in    $\mathcal{C}_{4} \cup (\mathcal{C}_{590} \cap \overline{\mathcal{C}}_{543})$:

 					\begin{enumerate}[label=(\arabic{enumi}.\arabic*)]
 					
 					\item  \label{fnsC1137square31}  $(a_{23},a_{24},y_{1},x_{1},a_{23})$;
 					
 					\item  \label{fnsC1137square32}  $(a_{24},a_{34},y_{3},x_{3}, a_{24})$;
 					
 					\item  \label{fnsC1137square33}  $(a_{34},a_{23},m_{1},n_{1}, a_{34})$;

 					\item  \label{fnsC1137square34} $(a_{23},a_{24},v_{3},v_{4},a_{23})$;
 					
 					\item   \label{fnsC1137square35} $(a_{24},a_{34},v_{2},v_{3}, a_{24})$;
 					
 					\item   \label{fnsC1137square36} $(a_{34},a_{23},v_{4},v_{2}, a_{34})$.
 					
 				\end{enumerate}	
 				Here,  $y_{1},x_{1}$, $y_{3},x_{3}$, $m_{1},n_{1}$ are  distinct   vertices in the interior of  $\mathcal{C}_{543}  \subset \mathcal{C}_{600}$ that do not belong to   $\mathcal{C}_{57}$.  The vertices \(y_1\) and \(y_3\) in \(\mathcal{C}_{543}\) are labeled as in Figure~\ref{figure:a24linkin543}.

 					\end{enumerate}

 			Moreover, for each empty square $(C_1,C_2,C_3,C_4,C_1)$ in $\mathcal{C}_{1137}$, we may reorder the vertices so that $[C_1, C_2] \subset \partial \mathcal{C}_{1137}$ and $C_3, C_4 \in \operatorname{int}(\mathcal{C}_{1137})$. Furthermore, any two such empty squares intersect in at most one common edge of the form \([a_{ij},a_{ik}]\) or \([a_{ij},v_{k}]\), where \(\{i,j,k\}=\{2,3,4\}\).

 	\end{lemma}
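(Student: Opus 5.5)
The approach is to reduce every empty square in $\mathcal{C}_{1137}$ to one of the three new edges $[a_{23},a_{24}]$, $[a_{23},a_{34}]$, $[a_{24},a_{34}]$ coming from the cone $\mathcal{C}_4$, and then classify the possible completions using Lemmas~\ref{lemma:triplevertices} and~\ref{lemma:1neighborhood}.

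\textbf{Step 1: reduction to a new edge.} Let $(C_1,C_2,C_3,C_4,C_1)$ be an empty square in $\mathcal{C}_{1137}=\mathcal{C}_{1133}\cup_{c_6}\mathcal{C}_4$. If all four sides lie in $\mathcal{C}_{1133}$, then by Lemma~\ref{lemma:squareC1133} the square is one of the three squares of Lemma~\ref{lemma:squareC590}. But in $\mathcal{C}_{1137}$ each of those has a diagonal among the new edges, for instance $[a_{23},a_{24}]$ in $(v_1,a_{23},w_{2,34},a_{24},v_1)$, so it is no longer empty. Hence some side is a new edge. By the $\mathbb{Z}_3$-symmetry permuting $\{2,3,4\}$, which preserves $\mathcal{C}_4$, $\mathcal{C}_{590}$ and $\overline{\mathcal{C}}_{543}$, we may take $C_1=a_{23}$ and $C_2=a_{24}$. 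Neither diagonal $[a_{23},C_3]$ nor $[a_{24},C_4]$ may be present.

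\textbf{Step 2: case split on $C_3,C_4$.} If $C_3$ or $C_4$ is the cone point $v_1$ or lies in $c_4$, then the new edges of $\mathcal{C}_4$ and the edges of $v_1$ in $\mathcal{C}_{590}$ quickly force a diagonal, or give squares of type (1). Those type (1) squares are exactly the squares formed from the pair of empty triangles of Lemma~\ref{lemma:fnsC1137simplex}, namely $(a_{23},a_{24},w_{4,23},w_{3,24})$; this should be verified directly in $c_{10}$. Otherwise $C_3,C_4\in\mathcal{C}_{1133}\setminus\{v_1\}$. The edge $[C_3,C_4]$ lies either in $\mathcal{C}_{590}$ or in $\overline{\mathcal{C}}_{543}$. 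Using the fact that there is no edge between the interior of one piece and $\partial\setminus c_{10}$ of the other, both $C_3$ and $C_4$ lie on one side. I would then treat the two sides separately.
\begin{enumerate}
\item \emph{The $\mathcal{C}_{590}\subset\mathcal{C}_{600}$ side.} The conditions $[a_{24},C_3]$, $[C_3,C_4]$, $[C_4,a_{23}]$, with $C_3\notin N_1(a_{23})$ and $C_4\notin N_1(a_{24})$, say precisely that $(C_4,C_3)$ is a pair at distance $1$ lying in the boundaries of the $1$-neighborhoods of $a_{23}$ and $a_{24}$ respectively, while avoiding the other vertex. Lemma~\ref{lemma:1neighborhood} was set up for $v_2$ and $a_{24}$. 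I would first transport it to the pair $(a_{23},a_{24})$ via Lemma~\ref{lemma:triplevertices}: this pair is at distance $2$ through $a_2$, $w_{2,34}$ and $v_1$, so it falls into one of the classified triple types. Then I would read off the solutions, which should give $(v_3,v_4)$ and the interior pairs $(y_1,x_1)$. The pairs lying on $c_{10}$ are excluded or absorbed into type (1).
\item \emph{The $\overline{\mathcal{C}}_{543}$ side.} I would run the same computation inside $\overline{\mathcal{C}}_{600}$. Here the vertices $\overline{v}_i$ are absent, so only the interior pairs $(\overline{y}_1,\overline{x}_1)$ survive, together with the ones on $c_{10}$ already counted.
\end{enumerate}
Applying the $\mathbb{Z}_3$ symmetry then yields the $3+3+6=12$ squares.

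\textbf{Step 3: the structural claims.} In each listed square, $[C_1,C_2]$ is an edge of $c_4\subset\partial\mathcal{C}_{1137}$. The vertices $C_3,C_4$ are interior: the $w$'s lie on $c_{10}=\mathcal{C}_{590}\cap\overline{\mathcal{C}}_{543}$, which is interior to $\mathcal{C}_{1137}$, and $v_i$, $y_1$, $x_1$ and the others are interior by construction. Pairwise intersections then follow by inspecting the list. The main obstacle is Step 2(a): one must check carefully that Lemma~\ref{lemma:1neighborhood}'s distance computation really transfers to the pair $(a_{23},a_{24})$ in $\mathcal{C}_{590}$ rather than in $\mathcal{C}_{600}$. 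The three edges $[v_1,w_{\cdot}]$ are missing there, and this could create extra solutions involving $v_1$ or the $w$'s, which would have to be excluded by hand.
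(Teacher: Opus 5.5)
Your proposal is correct and is essentially the paper's own proof. After killing the three squares of Lemma~\ref{lemma:squareC590} by the new diagonals, you normalise to $C_1=a_{23}$, $C_2=a_{24}$ and split according to whether $C_3,C_4$ lie in $c_{10}$, $\mathcal{C}_{590}$ or $\overline{\mathcal{C}}_{543}$. You then transport Lemma~\ref{lemma:1neighborhood} to the pair $(a_{23},a_{24})$ via Lemma~\ref{lemma:triplevertices}, which yields $(C_3,C_4)\in\{(w_{4,23},w_{3,24}),(v_3,v_4),(y_1,x_1),(\overline{y}_1,\overline{x}_1)\}$, and hence twelve squares after applying the $\mathbb{Z}_3$-symmetry.

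The obstacle you flag in Step 2(a) is harmless: deleting the edges $[v_1,w_{\cdot}]$ can only destroy candidate squares, because the diagonal conditions concern only $a_{23}$ and $a_{24}$, whose neighbours are the same in $\mathcal{C}_{590}$ and $\mathcal{C}_{600}$. Also, the type (1) squares come from the $c_{10}$ case, not from the case where $C_3$ or $C_4$ is $v_1$ or lies in $c_4$.
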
 
 
  \begin{proof} 
  	
  Firstly, the three empty squares  in    $\mathcal{C}_{1133}$ shown  in	 Lemmas \ref {lemma:squareC590} and \ref{lemma:squareC1133} are not empty in  $\mathcal{C}_{1137}$. For example, there is an edge $[a_{23}, a_{24}]$ in $\mathcal{C}_{1137}$, so the circle  $(v_1, a_{23}, w_{2,34},a_{24}, v_1)$ in not full.

  		Suppose  $(C_1, C_2, C_3,C_4, C_1)$ is an empty square in $\mathcal{C}_{1137}$. As in the proof of Lemma \ref{lemma:fnsC1137simplex}, without loss of generality, we may assume $C_1=a_{23}=\overline{a}_{23}$,  and  $C_2=a_{24}=\overline{a}_{34}$.

  		\begin{enumerate}

  		\item  We first assume $C_3 \in c_{10}$:
  			\begin{itemize}
  			
  			\item
  			 It is  not difficult to see  $C_3$ can not be any of $\{a_2,a_3,a_{34}\}$;
  		\item We also have  $C_3\neq a_4$. Otherwise, $C_4$ would be either $v_1$ or $a_{34}$.
  			In either case, there is an edge between $C_2$ and $C_4$, which contradicts the fact that $(C_1, C_2, C_3,C_4, C_1)$ is an empty square in $\mathcal{C}_{1137}$;

  			\item 	Moreover, if  $C_3= w_{4,23}$, then $C_4=w_{3,24}$. There is no edge between $a_{23}$ and  $w_{4,23}$  (or between $a_{24}$ and $w_{3,24}$) in $\mathcal{C}_{1137}$, so $(a_{23}, a_{24}, w_{4,23}, w_{3,24}, a_{23})$ is an empty square in $\mathcal{C}_{1137}$.
  			\end{itemize}

  \item 	 We may now  assume both $C_3$  and $C_4$ lie in $\mathcal{C}_{1133}\setminus c_{10}$.   Since there is an edge $[C_3, C_4]$, it cannot be the case that $C_3 \in \mathcal{C}_{590} \setminus  c_{10}$ and $C_4 \in \overline{\mathcal{C}}_{543} \setminus  c_{10}$ (and vice versa).
   We further assume both    $C_3$ and $C_4$  lie in $\overline{\mathcal{C}}_{543}$.

   Recall that for a vertex $a_{ij}$, $w_{ij,k}$, $y_i$, etc., in $\mathcal{C}_{600}$,  we denote by $\overline{a}_{ij}$, $\overline{w}_{ij,k}$ and $\overline{y}_i$ the corresponding vertex  in the copy $\overline{\mathcal{C}}_{600}$. 
   Since $C_3$ and $C_4$ lie in the 1-neighborhoods  of $\overline{a}_{24}$ and $\overline{a}_{23}$, respectively, we consider the corresponding
    1-neighborhoods  of $\overline{a}_{24}$ and $\overline{a}_{23}$.
     Note that $\{\overline{a}_{23}, \overline{w}_{2,34},\overline{a}_{24} \}$  is equivalent to $\{a_2, a_{24}, w_{4,23}\}$ in Lemma \ref{lemma:triplevertices}, which  is equivalent to $\{v_2, v_{1}, a_{24}\}$ in turn.
  	   From the proof of   Lemma 	\ref{lemma:1neighborhood}, we have $$ \partial N_1(\overline{a}_{23})^{(0)}\cap  \partial N_1(\overline{a}_{24})^{(0)}=\{\overline{v}_1,\overline{a}_2,\overline{w}_{2,34}\}.$$
  	   By  Lemma 	\ref{lemma:1neighborhood}, consider the 1-neighborhoods of $\{\overline{v}_1,\overline{a}_2,\overline{w}_{2,34} \}$ in $\partial N_1(\overline{a}_{24})$, as illustrated in a picture which is a copy of  Figure \ref{figure:a24linkin543}. The vertex  $C_3$ is one of $\{\overline{w}_{4,23},\overline{v}_3, \overline{y}_1\}$, where $\overline{y}_1 \in\overline{\mathcal{C}}_{543}$.
   Similarly, $C_4$ is one of $\{\overline{w}_{3,24},\overline{v}_4, \overline{x}_1\}$,  where $\overline{x}_1 \in \partial N_1(\overline{a}_{23}) \cap  \overline{\mathcal{C}}_{543}$:
   
   	\begin{itemize}
   	
   	\item 	Regarding the circle  $(a_{23},a_{24},\overline{v}_{3},\overline{v}_{4},a_{23})$, we observe that  neither   $\overline{v}_3$ nor $\overline{v}_4$ belongs to   $\overline{\mathcal{C}}_{543}$. Consequently,  the empty square  $(\overline{a}_{23}, \overline{a}_{24},\overline{v}_{3},\overline{v}_{4}, \overline{a}_{23})$ does not exist in  $\mathcal{C}_{1137}$.

   	\item 
   	The subcase  $C_3=\overline{w}_{4,23}$ leads to a contradiction to our assumption.
   	
   	\item 
   	  We  obtain an empty  square $(\overline{a}_{23},\overline{a}_{24},\overline{y}_{1},\overline{x}_{1},\overline{a}_{23})$, as neither  the edge $[\overline{a}_{23},\overline{y}_1]$ nor the edge $[\overline{a}_{24},\overline{x}_1]$ exists by Lemma 	\ref{lemma:1neighborhood}.

   		\end{itemize}

  		\item  	The case in which  both $C_3$ and $C_4$ lie in $\mathcal{C}_{590}$ is similar. We obtain  the empty square $(a_{23},a_{24},y_{1},x_{1},a_{23})$ as above. Moreover, 
  	 in this case both $v_3$ and $v_4$  lie in  $\mathcal{C}_{590}$, so we  also obtain an additional   empty square  $(a_{23}, a_{24},v_{3},v_{4},a_{23})$ in $\mathcal{C}_{4} \cup \mathcal{C}_{590} \subset \mathcal{C}_{1133}$.

  		 	\end{enumerate}		
  		 			
  		 	In conclusion, we obtain exactly four empty squares in \(\mathcal{C}_{1137}\), with \([a_{23},a_{24}]\) being one of their edges.
  		 	
  		 	Similarly, we have exactly eight additional empty squares in \(\mathcal{C}_{1137}\), with one of their edges being \([a_{23},a_{34}]\) or \([a_{24},a_{34}]\), as stated in the lemma.

 		From the above argument, it is readily seen that for each empty square in the lemma, we may reorder the vertices so that $[C_1, C_2] \subset \partial \mathcal{C}_{1137}$ and $C_3, C_4 \in \operatorname{int}(\mathcal{C}_{1137})$. Furthermore, one can check directly that any two such empty squares intersect in at most one common edge of the form \([a_{ij},a_{ik}]\) or \([a_{ij},v_{k}]\), where \(\{i,j,k\}=\{2,3,4\}\).

 \end{proof}

\begin{figure}
	\begin{center}
		\begin{tikzpicture}
		\node at (0,0) {\includegraphics[width=10cm,height=10cm]{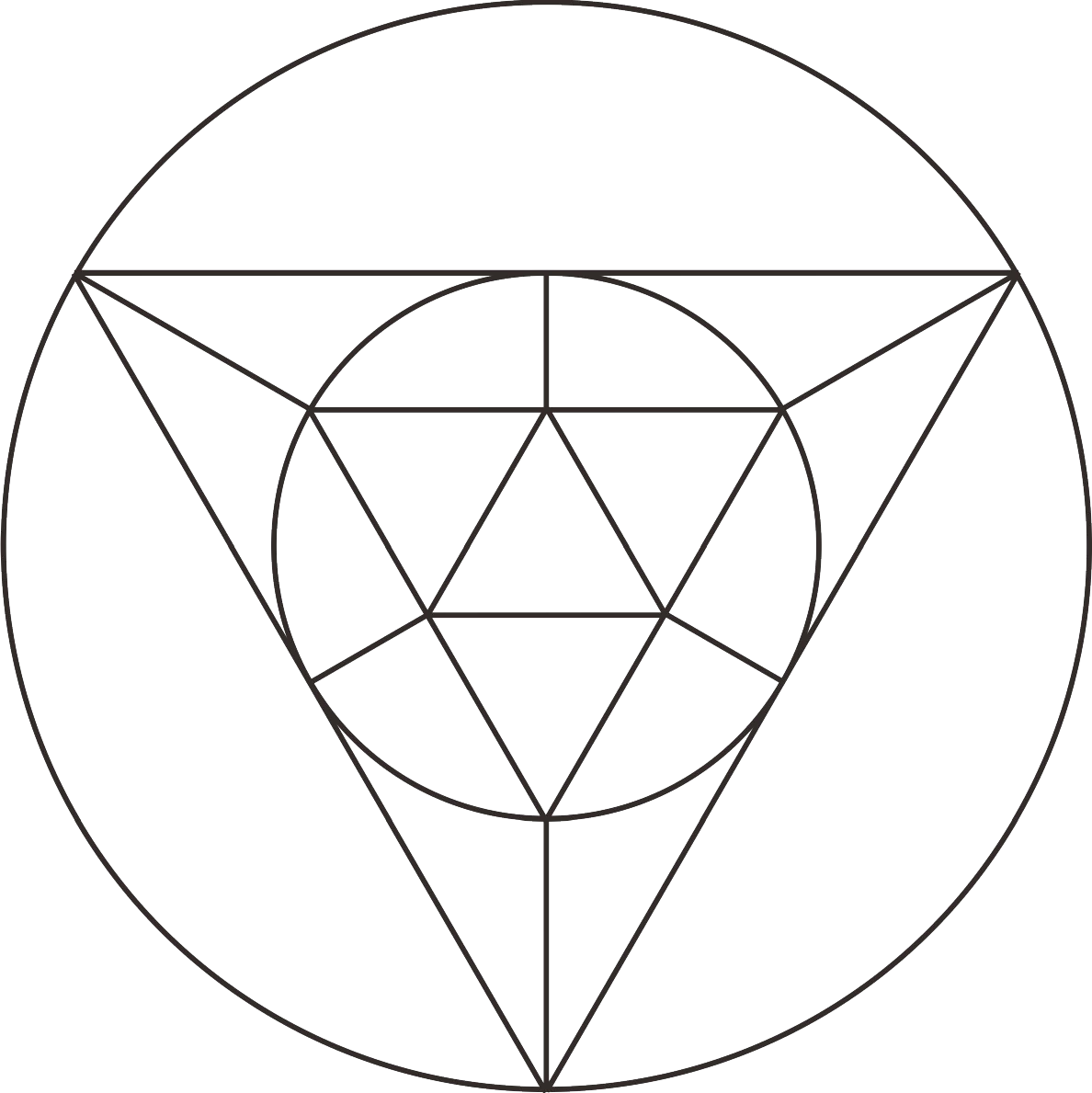}};

		\node at (0.4,-2.8){\small $a_{2}$};
		
		\node at (2.6,-1.5){\small $w_{2,34}$};
		
		\node at (2.5,1.8){\small $w_{4,23}$};

		\node at (0.4,1.5){\small $a_{4}$};
		
		\node at (-2.6,-1.5){\small $w_{2,14}$};
		
		\node at (-2.3,1.8){\small $w_{4,12}$};

		\node at (1.35,-1.1){\small $v_{1}$};
		
			\node at (-1.35,-1.1){\small $v_{3}$};
		
		\node at (0.3,-5.3){\large $y_{1}$};
		
			\node at (4.8,3.0){\large $y_{2}$};
			
				\node at (0.2,3.0){\large $y_{3}$};
				
					\node at (-4.8,3.10){\large $y_{4}$};

		\end{tikzpicture}
	\end{center}
	\caption{The link of the vertex  $a_{24}$ in the 3-complex $\mathcal{C}_{600}$  is an icosahedron.  Here, for example, the vertex labeled by $a_2$ corresponds to the edge $[a_{24},a_2]$ in  $\mathcal{C}_{600}$. Note that the vertices $v_1$ and $v_3$ lie in the interior of the 3-complex  $\mathcal{C}_{57}$.}
	\label{figure:a24linkin543}
\end{figure}

	\subsection{The 3-complex $\mathcal{C}_{1163}$}  \label{subsection:c1163}

 	In analogy to the construction  of $\mathcal{C}_{1137}$, we obtain the 3-complex $\mathcal{C}_{1163}$. Consider the cone $\mathcal{C}_{30}$ with apex at $v_1$, constructed over the 2-complex $c_{30}$ illustrated in Figure \ref{figure:c30}. Its lateral surface admits the triangulation $c_6$ of a 2-disk.  We glue the cone  $\mathcal{C}_{30}$ to $\mathcal{C}_{1133}$ along the 2-complex $c_6$. This yields a triangulated 3-ball consisting of 1163 tetrahedra, which we denote by $\mathcal{C}_{1163}$.  Accordingly, the boundary of $\mathcal{C}_{1163}$ consists of four parts: three copies of $c_{10}$ and one copy of $c_{30}$.
 		Moreover, $\overline{a}_1,a_2=\overline{a}_2,a_3=\overline{a}_3,a_4=\overline{a}_4$ serve as the \emph{genuine vertices} of  $\mathcal{C}_{1163}$. 
 	
 	Moreover, since the cone \(\mathcal{C}_{30}\) is ``bigger'' than the cone \(\mathcal{C}_{4}\), the properties of the 3-complex \(\mathcal{C}_{1163}=\mathcal{C}_{30}\cup\mathcal{C}_{1133}\) are finer than those of the 3-complex \(\mathcal{C}_{1137}=\mathcal{C}_{4}\cup\mathcal{C}_{1133}\). In comparison with Lemmas~\ref{lemma:fnsC1137simplex} and~\ref{lemma:fnsC1137square}, we conclude that
 
 	 	 \begin{lemma} \label{lemma:fnsC1163}  The 3-complex $\mathcal{C}_{1163}=\mathcal{C}_{30} \cup \mathcal{C}_{590}\cup \overline{\mathcal{C}}_{543} $ is simplicial, it is a flag triangulation of the 3-ball. 	Nevertheless, $\mathcal{C}_{1163}$ is not square-free; the three empty squares contained  in $\mathcal{C}_{590}$ and $\mathcal{C}_{1133}$ (as established in Lemmas \ref{lemma:squareC590} and \ref{lemma:squareC1133}) are the only  empty squares in  $\mathcal{C}_{1163}$. 	Moreover, for each empty square $(C_1,C_2,C_3,C_4,C_1)$ in $\mathcal{C}_{1163}$, we may reorder the vertices so that $C_1, C_3 \in \partial \mathcal{C}_{1163}$ and $C_2, C_4 \in \operatorname{int}(\mathcal{C}_{1163})$.

 	\end{lemma}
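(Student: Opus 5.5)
We follow the proof pattern of Lemmas~\ref{lemma:fnsC1137simplex} and~\ref{lemma:fnsC1137square}, with the cone $\mathcal{C}_4$ replaced by $\mathcal{C}_{30}$. The 3-complexes $\mathcal{C}_{1163}$ and $\mathcal{C}_{1133}$ differ as follows. Gluing $\mathcal{C}_{30}$ along $c_6$ adds the interior vertices of $c_{30}$ as new vertices. It also adds new edges among the vertices of $c_{30}$ and new edges from these vertices to $v_1$. The key structural fact to extract from Figure~\ref{figure:c30} is that $c_{30}$ is itself flag-no-square. Moreover, its boundary circle (subdivided by $a_{23},a_{24},a_{34}$) is full in $c_{30}$: no edge of $c_{30}$ joins two nonadjacent boundary vertices. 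In particular, no edge of $c_{30}$ joins any two of $a_{23},a_{24},a_{34}$. This is exactly what failed for $c_4$ and produced the empty triangles and squares of $\mathcal{C}_{1137}$.

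\textbf{Simplicial structure and flagness.} $\mathcal{C}_{30}$ is a cone over a simplicial disk, and it meets $\mathcal{C}_{1133}$ exactly in $c_6$. Hence $\mathcal{C}_{1163}$ is simplicial, provided no new vertex is identified with a vertex of $\mathcal{C}_{1133}$ and no two simplices share a vertex set. This holds because no new edge joins two vertices already adjacent in $\mathcal{C}_{1133}$: by fullness, the new edges among old vertices of $c_{30}$ would only be among $\{a_{23},a_{24},a_{34}\}$, and there are none.

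For flagness, take a pairwise adjacent set $S$. If $S$ contains an interior vertex of $c_{30}$, then $S$ lies in the link closure of that vertex, which lies in $\mathcal{C}_{30}$. Since $\mathcal{C}_{30}$ is a cone over a flag disk, $S$ spans a simplex. Otherwise $S$ consists of vertices of $\mathcal{C}_{1133}$. Every edge among old vertices is then already an edge of $\mathcal{C}_{1133}$, so flagness follows from Lemma~\ref{lemma:squareC1133}.

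\textbf{Empty squares.} First we show that the three squares of Lemma~\ref{lemma:squareC590} remain empty. They pass through $v_1$, $w_{i,jk}$ and a pair $a_{ij},a_{ik}$. No new edge joins $a_{ij}$ to $a_{ik}$, and $w_{i,jk}$ is not adjacent to $v_1$ in $\mathcal{C}_{1163}$, since it lies on the far side of $c_{10}$. Their vertices $v_1$ and $w_{i,jk}$ are interior, while $a_{ij},a_{ik}$ lie on $\partial\mathcal{C}_{1163}$ (inside $c_6$, which is now interior except along its boundary circle). This gives the stated reordering $C_1,C_3\in\partial$ and $C_2,C_4\in\operatorname{int}$; this needs a quick check against the boundary description.

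Conversely, let $Q=(C_1,C_2,C_3,C_4,C_1)$ be an empty square not contained in $\mathcal{C}_{1133}$. Then $Q$ contains an interior vertex $u$ of $c_{30}$. If all vertices of $Q$ lie in $\mathcal{C}_{30}$, then $Q$ is either contained in $c_{30}$ or passes through $v_1$. In the first case it is not empty, since $c_{30}$ is square-free. In the second case $v_1$ is adjacent to everything else in $Q$, so $Q$ has a diagonal. Otherwise, the two neighbours of the vertex $C_3\notin\mathcal{C}_{30}$ on $Q$ lie in $\mathcal{C}_{30}\cap\mathcal{C}_{1133}=c_6$, hence on $\partial c_{30}\cup\{v_1\}$. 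Say $u=C_1$, so that $C_2,C_4\in\partial c_{30}$. The case $C_2=v_1$ or $C_4=v_1$ gives a diagonal $[v_1,C_1]$, so neither is $v_1$. A path $C_2,u,C_4$ through an interior vertex of a full flag-no-square disk whose boundary vertices are nonadjacent forces a structure that one rules out directly from Figure~\ref{figure:c30}. Specifically, one verifies that each interior vertex of $c_{30}$ is adjacent to at most two vertices of $\partial c_{30}$, and that these two are adjacent to each other.

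\textbf{Main obstacle.} The hard step is this last combinatorial verification on $c_{30}$ (the links of its interior vertices). It is also the step showing that no new square arises with $C_2,C_4$ equal to a pair like $a_{23},a_{24}$ together with a vertex $C_3$ of type $y_1$ or $v_3$ as in Lemma~\ref{lemma:fnsC1137square}. That requires that no interior vertex of $c_{30}$ is adjacent to two of $a_{23},a_{24},a_{34}$. I would establish this by inspecting Figure~\ref{figure:c30}, which shows $c_{30}$ as a subdivision of the $c_{10}$-like picture in which the $a_{ij}$ are separated by at least two edges.
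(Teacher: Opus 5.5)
Your proposal is correct and follows essentially the same route as the paper. All neighbours of a new vertex lie in $\mathcal{C}_{30}$, and flagness comes from $\mathcal{C}_{30}$ being a cone over the flag disk $c_{30}$ together with Lemma~\ref{lemma:squareC1133}. The three old squares survive because every new edge has a new endpoint. A new empty square would force $C_2,C_4$ to be non-adjacent vertices of $\partial c_{30}$ with a common interior neighbour, and no such neighbour exists. Your uniform criterion (each interior vertex of $c_{30}$ has at most two boundary neighbours, and they are adjacent) is just a cleaner packaging of the paper's check of the pairs $\{a_2,a_4\}$, $\{a_{23},a_{24}\}$, $\{a_2,a_{34}\}$.

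There are two small slips. First, when $C_2=v_1$ the offending diagonal is $[v_1,C_4]$, not $[v_1,C_1]$, which is a side of the square. Second, $w_{i,jk}$ is non-adjacent to $v_1$ because $[v_1,w_{i,jk}]$ was already absent in $\mathcal{C}_{590}$ and no new edge joins two old vertices. It is not because $w_{i,jk}$ lies ``on the far side'' of $c_{10}$; it lies in $c_{10}$.
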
 
 	\begin{proof} As in the case of  of $\mathcal{C}_{1137}$, one can check directly that  $\mathcal{C}_{1163}$ is a simplicial 3-complex.

 		 Note that  $\mathcal{C}_{1163}$ 
 		contains  thirteen  additional  vertices compared with   $\mathcal{C}_{1133}$. We denote the vertices in $\mathcal{C}_{1163} \setminus \mathcal{C}_{1133}$ by $E_{i}$  for $1 \leq i \leq 13$. Moreover, $\mathcal{C}_{1163}$  contains  $ 55=42+13$ additional edges relative to   $\mathcal{C}_{1133}$, and each new edge has at least  one-point among   the vertices $E_i$.

 			Let  $\{C_1, C_2,\cdots,  C_k\}$ be a collection of vertices in $$\mathcal{C}_{1163}=\mathcal{C}_{30} \cup_{c_{6}} \mathcal{C}_{1133}$$ such that these vertices are pairwise  adjacent by edges.
 			Since  $\mathcal{C}_{1133}$ is flag by Lemma  \ref{lemma:squareC1133}, we may assume $C_1=E_1 \in \mathcal{C}_{1163} \setminus \mathcal{C}_{1133}$. There are no edges from  $E_1$ to $\mathcal{C}_{1133}\setminus c_6$, so $\{C_1, C_2,\cdots,  C_k\}\subset \mathcal{C}_{30}$. 
 				 Since $\mathcal{C}_{30}$ is a cone  with apex at $v_1$ over  the flag 2-complex $c_{30}$,  it is a flag triangulated  3-ball. Hence  $\{C_1, C_2,\cdots,  C_k\}$ spans a $(k-1)$ simplex in $c_{30}$, which implies the 3-complex $\mathcal{C}_{1163}$  is  flag.

The three empty squares  in    $\mathcal{C}_{1133}$ obtained  in	 Lemma \ref{lemma:squareC1133} remain  empty in  $\mathcal{C}_{1163}$, since neither the edge $[v_1,w_{2,34}]$ nor the edge  $[a_{23},a_{24}]$ exists in $\mathcal{C}_{1163}$.

	Suppose  $(C_1, C_2, C_3,C_4, C_1)$ is an empty square in $\mathcal{C}_{1163}$. Without loss of generality, we may assume $C_1=E_1 \in \mathcal{C}_{1163} \setminus \mathcal{C}_{1133}$. Moreover, it is readily seen that  $\mathcal{C}_{30}$ is square-free and contains no empty squares, so $\{C_1, C_2, C_3, C_4\} \not\subset \mathcal{C}_{30}$. Thus we obtain   $C_3 \in  \mathcal{C}_{1133} \setminus c_6$.  In turn $C_2, C_4  \in c_6 \setminus v_1$. Then, up to re-ordering,  we may assume $\{C_2,C_4\}$ is $\{a_2,a_4\}$, $\{a_{23},a_{24}\}$ or  $\{a_{2},a_{34}\}$. In each case, there is no vertex $E_i  \in \mathcal{C}_{1163} \setminus \mathcal{C}_{1133}$ connected to both  $C_2$ and $C_4$. Therefore,  there are no additional empty squares other than  the three empty squares  contained in    $\mathcal{C}_{1133} \subset \mathcal{C}_{1163}$.

 	\end{proof}

In accordance with Lemma~\ref{lemma:interiorC543}, we have the following lemma, Lemma~\ref{lemma:halfsquare}, which will be used in the proof of Theorem~\ref{thm:almostfns} when studying the empty squares in the relatively special subdivision, since each triple $\{E,G,F\}$ in Lemma~\ref{lemma:halfsquare} may contribute ``half of an empty square''.

 	 	 	 \begin{lemma} \label{lemma:halfsquare}  Let \(\Delta\) be a \(3\)-simplex, and let \(\Delta^{*}\) be a subdivision of \(\Delta\) into one of the \(3\)-complexes \(\mathcal{C}'_{543}\), \(\mathcal{C}_{1137}\) and \(\mathcal{C}_{1163}\), where the four vertices of \(\Delta\) are mapped to the genuine vertices of \(\Delta^{*}\). Then  there does not exist a triple of vertices \(\{E,G,F\}\) of \(\Delta^*\) satisfying the following conditions:
 	 	 \begin{enumerate}
 	 	 	 		\item \label{halfsquare:1} \(E\) lies in the interior of \(\Delta\);
 	 	 	 		\item \label{halfsquare:2} There exists a 2-face \(\delta\) of \(\Delta\) such that \(G\) and \(F\) lie in \(\delta\);
 	 	 	 		\item  \label{halfsquare:3} \(\Delta^*\) contains the edges \([E,G]\) and \([E,F]\), but does not contain the edge \([G,F]\).
 	  	 \end{enumerate}
 	 	 	 	The only exceptions are the following cases:
 	 	 	 	\begin{enumerate}[label=(\alph*)]
 	 	 	 			
 	 	 	 			\item \label{halfsquare:a} \(\Delta^{*}\) is isomorphic to \(\mathcal{C}'_{543}\). \(G\) and \(F\) lie in the \(2\)-face \(\delta\) of \(\Delta\), and the subdivision of \(\delta\) induced by \(\mathcal{C}'_{543}\) is isomorphic to the \(2\)-complex \(c_{30}\). Moreover, there are exactly fifteen such triples.

 	 	 	 				\item \label{halfsquare:b} \(\Delta^{*}\) is isomorphic to \(\mathcal{C}_{1137}=\mathcal{C}_{4}\cup\mathcal{C}_{590}\cup\overline{\mathcal{C}}_{543}\). \(\{\,E,G,F\,\}\) is \(\{\,v_{1},a_{i},a_{j}\,\}\) or \(\{\,v_{1},a_{i},a_{jk}\,\}\) in \(\mathcal{C}_{4}\) for \(\{i,j,k\}=\{2,3,4\}\). Here remember that the cone \(\mathcal{C}_{4}\) is based on the \(2\)-complex \(c_{4}\) to \(v_{1}\);

 	 	 	 		\item  \label{halfsquare:c} \(\Delta^{*}\) is isomorphic to \(\mathcal{C}_{1163}=\mathcal{C}_{30}\cup\mathcal{C}_{590}\cup\overline{\mathcal{C}}_{543}\), and
 	 	 	 		\begin{itemize}
 	 	 	 			\item \(\{E,G,F\}\) is \(\{w_{2,34},a_{23},a_{24}\}\), \(\{w_{3,24},a_{23},a_{34}\}\), or \(\{w_{4,23},a_{24},a_{34}\}\);
 	 	 	 			\item \(\{E,G,F\}=\{v_{1},G,F\}\subset\mathcal{C}_{30}\) for some pair of vertices \(G\) and \(F\) in the \(2\)-complex \(c_{30}\). Here remember that the cone \(\mathcal{C}_{30}\) is based on the \(2\)-complex \(c_{30}\) to \(v_{1}\).
 	 	 	 		\end{itemize}

\end{enumerate}

 	\end{lemma}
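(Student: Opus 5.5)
We treat the three possible types of $\Delta^*$ separately. The uniform tool is the trick from Lemma~\ref{lemma:interiorC543}: a triple $\{E,G,F\}$ with $G,F$ in a face $\delta$ and $E$ joined to both gives a half-square. We close it into a $4$-circle through a vertex on the other side of $\delta$ in an ambient flag-no-square complex, such as $\mathcal{C}_{600}$ or its copy $\overline{\mathcal{C}}_{600}$. Since that complex has no empty squares, the extra vertex forces a diagonal, and the diagonal is located using the explicit $1$-neighborhood descriptions already available.

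\textbf{Case $\mathcal{C}'_{543}$.} Here $\Delta^*\cong\mathcal{C}_{543}$, with the faces relabelled as $c_3,c_3,c_4,c_{30}$. Each of $c_3,c_3,c_4$ has no interior edge missing between boundary-adjacent pairs; in fact any two non-adjacent vertices $G,F$ there lie on the boundary of $\delta$. For these faces I would argue as follows. Since $\partial\mathcal{C}_{543}$ is full in $\mathcal{C}_{543}$ (Lemma~2.10 of \cite{PrzytyckiS:2009}), a half-square with $E$ interior closes through $v_1$, via the cone $\mathcal{C}_{57}$, to a $4$-circle in $\mathcal{C}_{600}$. Lemma~\ref{lemma:fnsC543} then forces $E$ into $\mathcal{C}_{57}$, which is impossible. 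This is exactly Lemma~\ref{lemma:interiorC543}, applied to whichever original $c_{10}$-piece contains $\{G,F\}$. If $G$ and $F$ lie in different original $c_{10}$-pieces, I would instead use the description $\mathcal{C}_{57}=\mathrm{span}\{u:\mathrm{dist}(u,\Delta_0)\le 1\}$ and close the circle through the appropriate $v_i$. The face $c_{30}$ consists of parts of three $c_{10}$ pieces. The exceptions arise exactly when $G$ and $F$ sit in two different pieces and no single $v_i$ is adjacent to both. Enumerating these pairs $\{G,F\}$ from Figures~\ref{figure:C57} and~\ref{figure:c30} and counting interior common neighbours should yield the fifteen triples.

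\textbf{Case $\mathcal{C}_{1137}$.} The interior of $\Delta^*$ consists of the interior of $\mathcal{C}_{590}$, the interior of $\overline{\mathcal{C}}_{543}$, the interior of the glued $c_{10}$, and $v_1$. Take the faces one at a time.
\begin{enumerate}
\item For the three $c_{10}$-faces of $\overline{\mathcal{C}}_{543}$, Lemma~\ref{lemma:interiorC543} excludes all triples. This covers $E$ interior to $\overline{\mathcal{C}}_{543}$ directly. For $E$ in the glued $c_{10}$, the same lemma applies via its item~(\ref{interiorC543:2}).
\item For the $c_4$ face, $E$ must be adjacent to two vertices of $c_4$. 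If $E\ne v_1$, then $E$ lies in $\mathcal{C}_{1133}$, and the only $c_4$-vertices it sees are on $c_6$. The relevant cases are handled by Lemma~\ref{lemma:interiorC590} and the half-squares $v_2,a_3,a_4$. These collapse because $a_{34}$ is joined to both, which is why the exceptional cases should all have $E=v_1$.
\item With $E=v_1$, the cone structure over $c_4$ gives exactly the listed non-adjacent pairs $\{a_i,a_j\}$ and $\{a_i,a_{jk}\}$.
\end{enumerate}

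\textbf{Case $\mathcal{C}_{1163}$.} The argument is the same, except that the new edges $[a_{23},a_{24}]$ and so on are now absent. This makes the triples $\{w_{2,34},a_{23},a_{24}\}$ and their analogues survive, as in Lemma~\ref{lemma:squareC590}. The apex $v_1$ of $\mathcal{C}_{30}$ sees every vertex of $c_{30}$, which produces the second family. All other triples are excluded, using Lemmas~\ref{lemma:interiorC543} and~\ref{lemma:interiorC590} together with Lemma~\ref{lemma:fnsC1163}.

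\textbf{Main obstacle.} The step I expect to be hardest is the exact count of fifteen in case~(a). It requires a careful vertex-by-vertex inspection of $c_{30}$ against the $1$-neighborhoods in $\mathcal{C}_{600}$. I would organize it by listing the non-edges $\{G,F\}$ of $c_{30}$ whose vertices are separated across the red and cyan arcs. For each such pair I would use Lemmas~\ref{lemma:triplevertices} and~\ref{lemma:1neighborhood} to determine the common interior neighbours.
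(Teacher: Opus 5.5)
Your plan follows the paper's route: close a half-square through an apex of \(\mathcal{C}_{57}\) inside \(\mathcal{C}_{600}\), then apply Lemmas~\ref{lemma:interiorC543} and~\ref{lemma:interiorC590}. Two steps fail, however, and in both cases the statement itself is at fault, not just your write-up.

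\textbf{The \(\mathcal{C}_{1137}\) and \(\mathcal{C}_{1163}\) cases.} Your claim that the half-squares \(\{v_2,a_3,a_4\}\) ``collapse because \(a_{34}\) is joined to both'' is invalid. Condition (3) only asks that \([G,F]\) be absent, and a common neighbour of \(a_3,a_4\) does not create the edge \([a_3,a_4]\). Concretely:
\begin{itemize}
\item \(v_2\) is an interior vertex of \(\Delta^*\);
\item the edges \([v_2,a_3]\subset\Delta_3\) and \([v_2,a_4]\subset\Delta_4\) lie in \(\mathcal{C}_{17}\subset\mathcal{C}_{590}\);
\item \(a_3,a_4\) lie on the \(c_4\)-face;
\item \([a_3,a_4]\) is not an edge of \(\mathcal{C}_{1137}\).
\end{itemize}
So \(\{v_2,a_3,a_4\}\), \(\{v_3,a_2,a_4\}\) and \(\{v_4,a_2,a_3\}\) satisfy (1)--(3) but are not in list (b). With \(c_{30}\) in place of \(c_4\), the same triples satisfy (1)--(3) in \(\mathcal{C}_{1163}\) and are not in list (c). They are exactly the exceptions of Lemma~\ref{lemma:interiorC590}, so that lemma cannot exclude them, contrary to your \(\mathcal{C}_{1163}\) paragraph. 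Your item (1) also omits the case of \(E\) interior to \(\mathcal{C}_{590}\) with \(G,F\) on a face of \(\overline{\mathcal{C}}_{543}\), where the same triples reappear (e.g.\ \(a_3,a_4\) lie on the face \(\{\overline{a}_1,a_3,a_4\}\)). The paper's proof has the same hole: for \(E\) interior to \(\mathcal{C}_{590}\) it arrives at \(\{G,F\}=\{a_i,a_j\}\), then asserts a contradiction with Lemma~\ref{lemma:interiorC543}, which does not apply there. So (b) and (c) cannot be proved as stated; these triples must be added.

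\textbf{The count in (a).} Your reduction matches the paper's. The faces \(c_3,c_3,c_4\) together form the \(c_{10}\)-piece over \(\{a_2,a_3,a_4\}\), and inside \(c_{30}\) only pairs lying in no common \(c_{10}\)-piece survive. Note that these pieces are separated by the arcs \(a_1a_{1j}a_j\), not by the red and cyan arcs, which bound the faces of \(\Delta\). But the count, which you leave at ``should yield fifteen'', does not come out to fifteen. Take \(G\) in the piece over \(\{a_1,a_2,a_4\}\) (apex \(v_3\)) and \(F\) in the piece over \(\{a_1,a_2,a_3\}\) (apex \(v_4\)). Then \((v_4,v_3,G,E,F,v_4)\) is an induced pentagon in \(\mathcal{C}_{600}\).

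The links of the five edges of \(\operatorname{lk}[v_3,v_4]=(v_1,v_2,a_1,a_{12},a_2)\) are induced pentagons through \([v_3,v_4]\). Since there are exactly five such pentagons (as the paper's lists also say), these are all of them. In the links of \([v_1,v_2]\), \([v_2,a_1]\) and \([a_2,v_1]\), the vertex opposite \([v_3,v_4]\) is respectively \(a_{34}\), \(w_{1,34}\), \(w_{2,34}\), all on \(\partial\mathcal{C}_{543}\). Only the links of \([a_1,a_{12}]\) and \([a_{12},a_2]\) give an interior \(E\).

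Hence each edge of \(\{v_2,v_3,v_4\}\) contributes two triples, six in all. The pair \(\{G,F\}\) is one of \(\{w_{1,23},w_{1,24}\}\), \(\{w_{1,23},w_{1,34}\}\), \(\{w_{1,24},w_{1,34}\}\), \(\{w_{2,13},w_{2,14}\}\), \(\{w_{3,12},w_{3,14}\}\), \(\{w_{4,12},w_{4,13}\}\). The paper reaches fifteen by keeping every circle in its lists. Yet already there, the middle vertices \(F_7,F_9\) of two circles through \(F_3F_2\) are neighbours of the third apex \(F_1\), so they lie in \(\mathcal{C}_{57}\) and are not interior. The exact count you set as your main obstacle is therefore the wrong target.
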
 
 	\begin{proof}

 		 	First we assume $\Delta^*$ is isomorphic to   $\mathcal{C}'_{543}$:
 		 		\begin{itemize}	
 		 		\item 
 		 		From Figure~\ref{figure:C57},
 		 		the special subdivision \(c_{10}\) for the triangle with vertex set \(\{A=a_{2},C=a_{4},D=a_{3}\}\) in \(\mathcal{C}_{543}\) is the union of three parts of the boundary of \(\mathcal{C}'_{543}\), say certain subdivisions of the triangles with vertex sets \(\{A,B,C\}\), \(\{A,B,D\}\), and \(\{B,C,D\}\), respectively. By Item~(\ref{interiorC543:1}) of Lemma~\ref{lemma:interiorC543}, there is no triple of vertices \(\{E,G,F\}\) in \(\mathcal{C}'_{543}\) satisfying Items~(\ref{halfsquare:1}), (\ref{halfsquare:2}), and (\ref{halfsquare:3}) of Lemma~\ref{lemma:halfsquare} when \(G,F\) lie in these three parts of the boundary of \(\mathcal{C}'_{543}\);

 		 	\item So the remaining case is that there is a triple of vertices \(\{E,G,F\}\) satisfying the three conditions of Lemma~\ref{lemma:halfsquare}, such that \(G\) and \(F\) lie in the subdivision \(c_{30}\) for the triangle with vertex set \(\{a_{2},a_{3},a_{4}\}\). We note here that such triples of vertices really do exist. This can be seen by a careful inspection of the facets \(F_{1},F_{2},F_{30},F_{31}\), and \(F_{5}\) of the \(120\)-cell in the notation of Ma--Zheng \cite{Ma-Zheng:2021}.

 		 	Now we use the notations in Ma--Zheng \cite{Ma-Zheng:2021}, where \(F_{i}\), \(1 \leq i \leq 120\), are the facets of the 120-cell \(\mathcal{Y}_{120}\). Thus each facet \(F_i\) corresponds to a vertex \(C_i\) of the 600-cell \(\mathcal{C}_{600}\).

 		 We consider the 5-circles
 		 $$F_{i_1} \textbf{---} F_{i_2} \textbf{---} F_{i_3} \textbf{---} F_{i_4} \textbf{---} F_{i_5} \textbf{---} F_{i_1}$$
 		 such that \(F_{i_k}\) and \(F_{i_{k+1}}\) are adjacent, but \(F_{i_k}\) and \(F_{i_{k+2}}\) are not adjacent for each \(k\) mod \(5\).

 		 There are exactly five  5-circles	containing  the edge $F_{2} \textbf{---}F_{1}$:
 		 	\begin{equation}
 		 	\begin{aligned}
 		 	&F_{2} \textbf{---} F_{1} \textbf{---} F_{5} \textbf{---} F_{31} \stackrel{3}{\textbf{---}} F_{30} \textbf{---} F_{2};\\
 		 	&F_{2} \textbf{---} F_{1} \textbf{---} F_{7} \textbf{---} F_{22} \stackrel{3}{\textbf{---}} F_{14} \textbf{---} F_{2};\\
 		 	&F_{2} \textbf{---} F_{1} \textbf{---} F_{9} \textbf{---} F_{24} \stackrel{3}{\textbf{---}} F_{18} \textbf{---} F_{2};\\
 		 	&F_{2} \textbf{---} F_{1} \textbf{---} F_{11} \textbf{---} F_{16} \stackrel{3}{\textbf{---}} F_{26} \textbf{---} F_{2};\\
 		 	&F_{2} \textbf{---} F_{1} \textbf{---} F_{13} \textbf{---} F_{20} \stackrel{3}{\textbf{---}} F_{28} \textbf{---} F_{2}.
 		 	\end{aligned}
 		 	\label{eq:five-circles1}
 		 	\end{equation}

 	 There are exactly five  5-circles	containing  the edge $F_{3} \textbf{---}F_{2}$:
 	 
 	 	\begin{equation}
 	\begin{aligned}
 	& F_{3} \textbf{---}F_{2} \textbf{---}F_{10} \stackrel{3}{\textbf{---}} F_{7}  \textbf{---} F_{11} \textbf{---}F_{3}; \\     
 		 	 & F_{3} \textbf{---}F_{2} \textbf{---}F_{12} \stackrel{3}{\textbf{---}} F_{9}  \textbf{---} F_{13} \textbf{---}F_{3};\\
 		 	 		 	  & F_{3} \textbf{---}F_{2} \textbf{---}F_{14} \stackrel{3}{\textbf{---}} F_{38}  \textbf{---} F_{16} \textbf{---}F_{3};\\
 		 	 &   F_{3} \textbf{---}F_{2} \textbf{---}F_{18} \stackrel{3}{\textbf{---}} F_{40}  \textbf{---} F_{20} \textbf{---}F_{3};\\
 		 	 &   F_{3} \textbf{---}F_{2} \textbf{---}F_{34} \stackrel{3}{\textbf{---}} F_{48}  \textbf{---} F_{36} \textbf{---}F_{3}.
 		 	    	\end{aligned}
 		 	    \label{eq:five-circles2}
 		 	    \end{equation}
 		 	   
 		 	     There are exactly five  5-circles	containing  the edge $F_{3} \textbf{---}F_{1}$:
 	
 		 	\begin{equation}
 		 \begin{aligned}
 		     & F_{3} \textbf{---}F_{1} \textbf{---}F_{6}\textbf{---} F_{33}  \stackrel{3}{\textbf{---}} F_{32} \textbf{---}F_{3};\\
 		       & F_{3} \textbf{---}F_{1} \textbf{---}F_{7}\textbf{---} F_{23}  \stackrel{3}{\textbf{---}} F_{16} \textbf{---}F_{3};\\
 		           & F_{3} \textbf{---}F_{1} \textbf{---}F_{9}\textbf{---} F_{25}  \stackrel{3}{\textbf{---}} F_{20} \textbf{---}F_{3};\\
 		               & F_{3} \textbf{---}F_{1} \textbf{---}F_{10}\textbf{---} F_{14}  \stackrel{3}{\textbf{---}} F_{26} \textbf{---}F_{3};\\
 		                   & F_{3} \textbf{---}F_{1} \textbf{---}F_{12}\textbf{---} F_{18}  \stackrel{3}{\textbf{---}} F_{28} \textbf{---}F_{3}.
 		                  \end{aligned}
 		                  \label{eq:five-circles3}
 		                  \end{equation}

These circles can be obtained by a careful inspection of the adjacent relation of the facets of the 120-cell in Ma--Zheng \cite{Ma-Zheng:2021}, in particular, by Figures 12, 13, and 14 of \cite{Ma-Zheng:2021}. Here the label ``3'' in
\(F_{31} \stackrel{3}{\textbf{---}} F_{30}\)
will be used in Section \ref{section:conformaldim}.

Since facet \(F_i\) corresponds to a vertex \(C_i\) of the 600-cell \(\mathcal{C}_{600}\). We may assume  \(z_2=F_1\), \(z_3=F_2\), and \(z_4=F_3\).

Let \(\{E,G,F\}\) be a triple of vertices satisfying the three conditions of Lemma~\ref{lemma:halfsquare}, such that \(G\) and \(F\) lie in the subdivision \(c_{30}\) for the triangle with vertex set \(\{a_{2},a_{3},a_{4}\}\). We may assume that \(G\) and \(F\) lie in the subdivision \(c_{10}\) for the triangle with vertex set \(\{a_{1},a_{2},a_{4}\}\) and \(\{a_{1},a_{2},a_{3}\}\), respectively. So there is an edge between \(z_3=F_2\) and \(G\); similarly, there is an edge between \(z_4=F_3\) and \(F\). Note that there is no edge between \(E\) and \(z_3\), \(E\) and \(z_4\), \(G\) and \(z_4\), or \(F\) and \(z_3\). Hence
\[
z_4=F_{3} \textbf{---} z_3=F_{2} \textbf{---} G \textbf{---} E \textbf{---} F \textbf{---} F_{3}
\]
is one of the 5-circles above. We view $F_{6} \textbf{---} F_{33} \textbf{---} F_{32}$
as a ``half-square'' in \(\mathcal{C}'_{543}\). Similarly, there are fifteen ``half-squares''.

 		 	\end{itemize}

 Secondly,  $\Delta^*$ is isomorphic to   $\mathcal{C}_{1137}$:
 	\begin{itemize}	
 	\item 
 	Suppose that the vertices \(G\) and \(F\) lie in the \(2\)-face \(\delta\) of \(\Delta\), and that the subdivision of \(\delta\) induced by \(\Delta^*\) is isomorphic to the \(2\)-complex \(c_{10}\).
 	
 	\begin{itemize}
 		\item If the vertex \(E\) lies in the interior of the \(2\)-disk
 		\(\mathcal{C}_{590}\cap\overline{\mathcal{C}}_{543}\).
 		This contradicts Item~(\ref{interiorC543:2}) of Lemma~\ref{lemma:interiorC543};
 		
 		\item Then the vertex \(E\) lies in the interior of \(\Delta\).
 		This contradicts Item~(\ref{interiorC543:1}) of Lemma~\ref{lemma:interiorC543}.
 	\end{itemize}


 	\item Suppose that the vertices \(G\) and \(F\) lie in the \(2\)-face \(\delta\) of \(\Delta\), and that the subdivision of \(\delta\) is isomorphic to the \(2\)-complex \(c_{4}\).
 	
 	\begin{itemize}
 		\item If the vertex \(E=v_1\) lies in \(c_6=\mathcal{C}_{4}\cap \mathcal{C}_{590}\), then \(\{G,F\}\)
 		are some pair of vertices in \(c_{30}\);
 		
 		\item \(E\) lies in the interior of \(\mathcal{C}_{1133}=\mathcal{C}_{590} \cup \overline{\mathcal{C}}_{543}\).
 		
 		\begin{itemize}
 			\item \(E\) lies in the interior of \(\overline{\mathcal{C}}_{543}\);
 			
 			\item \(E\) lies in the interior of \(\mathcal{C}_{590}\). Then \(\{\,G,F\,\}\) is \(\{\,a_{i},a_{j}\,\}\) or \(\{\,a_{i},a_{jk}\,\}\) in \(c_{4}\) for \(\{i,j,k\}=\{2,3,4\}\);
 			
 			\item \(E\) lies in the interior of \(\mathcal{C}_{590} \cap \overline{\mathcal{C}}_{543}\). 
 		\end{itemize}
 	
 Then by careful inspection, each subcase contradicts Item~(\ref{interiorC543:1}) of Lemma~\ref{lemma:interiorC543}.

 	\end{itemize}

 		\end{itemize}

 	Thirdly,  $\Delta^*$ is isomorphic to   $\mathcal{C}_{1163}$:
 	
 		\begin{itemize}	
 			\item Suppose that the vertices \(G\) and \(F\) lie in the \(2\)-face \(\delta\) of \(\Delta\), and that the subdivision of \(\delta\) induced by \(\Delta^*\) is isomorphic to the \(2\)-complex \(c_{10}\).
 			
 			\begin{itemize}
 				\item If the vertex \(E\) lies in the interior of
 				\(\overline{\mathcal{C}}_{543}\), this contradicts Item~(\ref{interiorC543:1}) of Lemma~\ref{lemma:interiorC543}.

 				\item If the vertex \(E\) lies in the interior of
 				\(\mathcal{C}_{590}\), this contradicts Item~(\ref{interiorC543:1}) of Lemma~\ref{lemma:interiorC543}.
 				
 				\item \(E\) lies in the interior of the \(2\)-disk \(c_{10}=\mathcal{C}_{590} \cap \overline{\mathcal{C}}_{543}\). By Lemma~\ref{lemma:interiorC543}, we must have that \(E\) is one of \(w_{2,34}\), \(w_{3,24}\) and \(w_{4,23}\). We have that \(\{E,G,F\}\) is one of \(\{w_{2,34}, a_{23}, a_{24}\}\), \(\{w_{3,24}, a_{23}, a_{34}\}\) or \(\{w_{4,23}, a_{24}, a_{34}\}\). The author remarks that this subcase is different from \(\mathcal{C}_{1137}\); in \(\mathcal{C}_{1137}\), there are no triples of vertices  $\{E,G,F\}$ in this subcase.
 			\end{itemize}


 		\item Suppose that the vertices \(G\) and \(F\) lie in the \(2\)-face \(\delta\) of \(\Delta\), and that the subdivision of \(\delta\) induced by \(\Delta^*\) is isomorphic to the \(2\)-complex \(c_{30}\).
 		
 		\begin{itemize}
 			\item If the vertex \(E=v_1\) lies in \(c_{6}=\mathcal{C}_{30}\cap \mathcal{C}_{590}\), then \(\{G,F\}\)
 			are some pair of vertices in \(c_{30}\);
 			
 			\item \(E\) lies in the interior of \(\mathcal{C}_{1133}=\mathcal{C}_{590} \cup \overline{\mathcal{C}}_{543}\).
 			Then, similar to the case that \(\Delta^*\) is isomorphic to \(\mathcal{C}_{1137}\), this contradicts Item~(\ref{interiorC543:1}) of Lemma~\ref{lemma:interiorC543}.
 		\end{itemize}

 		\end{itemize}


  	\end{proof}


 	 	\section{Relatively special subdivisions with the almost flag-no-square property} \label{section:almostfns}

Let \(N\) be a 3-manifold admitting a triangulation \(\mathcal{T}\), and let \(\Gamma\) be a subgraph of \(\mathcal{T}\) as described in Lemma~\ref{lemma:inducedgraph}.
 	 In this section, 	we will construct a subdivision \(\mathcal{T}^*\) of \(\mathcal{T}\) from the 3-complexes considered in Section~\ref{section:almostfns3ball},   while preserving the embeddedness of \(\Gamma\).
 	 	Although the triangulation \(\mathcal{T}^*\) is not flag-no-square, it satisfies the property of being ``almost flag-no-square'',  which is enough for the proof of Theorem~\ref{thm:conformaldim}.
 	 		Theorem~\ref{thm:almostfns}  is the main technical result in the paper, which is inspired by Proposition~2.13 of Przytycki-\'Swiątkowski~\cite{PrzytyckiS:2009} and Lemma~5.2 of Cashen-Dani-Schreve-Stark~\cite{CDSStark:2025}.
 	 		Compared with the special subdivision of \(\mathcal{T}\) in \cite{PrzytyckiS:2009}, we call the triangulation \(\mathcal{T}^*\) a relatively special subdivision associated with \((\mathcal{T}, \Gamma)\).

  	 \subsection{Construction of the relatively special subdivision} \label{subsection:subdivision}
  	 
  	 Let \(N\) be a closed orientable 3-manifold admitting a triangulation \(\mathcal{T}\), and let \(\Gamma\) be a bipartite, flag-no-square, full subgraph of \(\mathcal{T}\) as described in Lemma~\ref{lemma:inducedgraph}.
  	 In this subsection, 	we will construct a subdivision \(\mathcal{T}^*\) of \(\mathcal{T}\).
  	 
  	 We denote the vertices of the  bipartite graph $\Gamma$ by $\{A_{i}\}^{m}_{i=1}$ and $\{B_{j}\}^{n}_{j=1}$.
  	 Vertices in $\mathcal{T}^{(0)} \setminus \Gamma^{(0)}$ will be denoted by $G, I, E, F, H$, etc.

  	 Regarding  the triangulation $\mathcal{T}$ of $N$ in Lemma \ref{lemma:inducedgraph}, and let $\Delta \in \mathcal{T}^{(3)}$ be a 3-simplex. Recall that if $\Gamma \cap \Delta$  is non-empty, then it  is either an edge or a vertex. For each $\Delta \in \mathcal{T}^{(3)}$, we replace $\Delta$ with a $3$-complex $\Delta^*$, which is a copy of one of the $3$-complexes $\mathcal{C}_{543}$, $\mathcal{C}'_{543}$, $\mathcal{C}_{1137}$, or $\mathcal{C}_{1163}$:

  	 \begin{itemize}

  	 	\item[\textbf{Replacement 1}] \label{almostfns:replace1}
  	 	If $\Delta$ is a tetrahedron in $\mathcal{T}^{(3)}$ with vertex set $\{G,I,E,F\}$ such that $\Delta \cap \Gamma = \emptyset$, then we replace $\Delta$ by a copy of $\mathcal{C}_{543}$, where $\{G,I,E,F\}$ are mapped onto the genuine vertices of $\mathcal{C}_{543}$;
  	 	

  	 	\item[\textbf{Replacement 2}] \label{almostfns:replace2}If $\Delta$ is a tetrahedron in $\mathcal{T}^{(3)}$ such that $\Delta \cap \Gamma = \{A_i,B_j\}$ is an edge, then we replace $\Delta$ by a copy of $\mathcal{C}'_{543}$ so that the vertices of $\Delta$ are mapped onto the genuine vertices of $\mathcal{C}'_{543}$. Moreover, we require that the vertex $A_i$ of $\Gamma$ is mapped to the genuine vertex $A$ of $\mathcal{C}'_{543}$, and that the vertex $B_j$ of $\Gamma$ is mapped to the genuine vertex $B$ of $\mathcal{C}'_{543}$;

  	 	\item[\textbf{Replacement 3}] \label{almostfns:replace3} If $\Delta$ is a tetrahedron in $\mathcal{T}^{(3)}$ such that  $\Delta \cap \Gamma=A_i$ is a vertex, and  none of the 2-faces  of $\Delta$ is adjacent to a tetrahedron $\Delta' \in \mathcal{T}^{(3)}$  with vertex set $\{A_i,B_j,H,F\}$  for some $B_j$ satisfying that the edge  $[A_i, B_j]$ lies in $\Gamma$, then  we still replace $\Delta$ with a copy of $\mathcal{C}_{543}$ such that the vertices of $\Delta$ are mapped to the genuine vertices of $\mathcal{C}_{543}$;

  	 	\item[\textbf{Replacement 4}] \label{almostfns:replace4} If $\Delta$ is a tetrahedron in $\mathcal{T}^{(3)}$ such that $\Delta \cap \Gamma=B_j$ is a vertex, and  none of the 2-faces  of $\Delta$ is adjacent to a tetrahedron $\Delta' \in \mathcal{T}^{(3)}$ with vertex set $\{B_j,A_i,H,F\}$ for some $A_i$ satisfying that the edge  $[B_j, A_i]$ lies in $\Gamma$, then we still replace $\Delta$ with a copy of $\mathcal{C}_{543}$ such that the vertices of $\Delta$ are mapped to the genuine vertices of $\mathcal{C}_{543}$;

  	 	\item[\textbf{Replacement 5}] \label{almostfns:replace5} If $\Delta$ is a tetrahedron in $\mathcal{T}^{(3)}$  such that $\Delta \cap \Gamma=A_{i}$ is a vertex, and   $\Delta$ admits a 2-face, namely the triangle $\delta$ with vertex set $\{A_{i}, G, I\}$,   which  is adjacent to a tetrahedron $\Delta' \in \mathcal{T}^{(3)}$ with vertex set $\{A_{i},B_{j}, G, I\}$, satisfying  that the edge  $[A_{i},B_j]$ lies in $\Gamma$. By  Sparsity I in Lemma \ref{lemma:inducedgraph}, at most one face of $\Delta$ satisfies this property. We replace $\Delta$ with a copy of  $\mathcal{C}_{1163}$, such that the vertices of $\Delta$ are mapped to the genuine vertices of  $\mathcal{C}_{1163}$. In addition, we require that $A_i$ is mapped to one of the vertices $a_j=\overline{a}_j$ ($j=2,3,4$) of  $\mathcal{C}_{1163}$,  and  that $\delta$ is mapped to  the 2-complex  $c_{30}$;

  	 	\item[\textbf{Replacement 6}] \label{almostfns:replace6} If $\Delta$ is a tetrahedron in $\mathcal{T}^{(3)}$ such that $\Delta \cap \Gamma=B_{j}$ is a vertex, and   $\Delta$ admits a 2-face, namely the triangle $\delta$ with vertex set  $\{B_{j},H,F\}$, which  is adjacent to a  tetrahedron $\Delta' \in \mathcal{T}^{(3)}$ with vertex set $\{A_{i},B_{j},H,F\}$, satisfying  that the edge $[A_{i},B_j]$ lies in $\Gamma$.  By   Sparsity I in Lemma  \ref{lemma:inducedgraph},  at most one face of  $\Delta$ satisfies this property.   We replace $\Delta$ with  a copy of $\mathcal{C}_{1137}$, such that the vertices of $\Delta$  are mapped to the genuine vertices of  $\mathcal{C}_{1137}$. In addition, we  require that $B_j$ is mapped to one of the vertice $a_3=\overline{a}_3$ of $\mathcal{C}_{1137}$,  and  that  $\delta$ mapped to  the 2-complex   $c_{4}$.

  	 \end{itemize}

  	 We now show that all the substituted pieces $\Delta^*$,   corresponding to each  $\Delta \in \mathcal{T}^{(3)}$,  are glued together to yield a triangulation $\mathcal{T}^*$ of $N$:
  	 \begin{enumerate}
  	 	\item  \label{almostfns:match1} If 	$\Delta_1$ and $ \Delta_2 $ in $\mathcal{T}^{(3)}$ share a triangle $\delta$, and $\Delta_1 \cap \Gamma=\Delta_2 \cap \Gamma=\emptyset$, then both $\Delta^*_1$ and $\Delta^*_2$  are copies of $\mathcal{C}_{543}$, and they induce the same triangulation $c_{10}$ of $\delta$; hence,  $\Delta^*_1$ and $\Delta^*_2$ are  glued together along $\delta$;
  	 	
  	 	\item  \label{almostfns:match2} If $\Delta_1$ and $ \Delta_2 $ in $\mathcal{T}^{(3)}$ share a triangle $\delta$ with vertex set $\{G, I, F \}$, and  $\Delta_1 \cap \Gamma=\emptyset$ while $\Delta_2 \cap \Gamma=A_i \in \Gamma$, then $\Delta^*_1$ is a copy of $\mathcal{C}_{543}$  whereas  $\Delta^*_2$ is a copy of either $\mathcal{C}_{543}$ or $\mathcal{C}_{1163}$ by   [Replacement 1],  [Replacement 3],  and  [Replacement 5]. In either case, $\Delta^*_1$ and $\Delta^*_2$ induce the same triangulation $c_{10}$ of $\delta$; hence, $\Delta^*_1$ and $\Delta^*_2$ are glued together along  $\delta$;

  	 	\item  \label{almostfns:match3}  If $\Delta_1$ and $ \Delta_2$ in $   \mathcal{T}^{(3)}$ share a triangle $\delta$ with vertex set $\{G, I, F\}$ and $\delta \cap \Gamma= \emptyset$, then either $\Delta_1 \cap \Gamma=\emptyset$ and $\Delta_2 \cap \Gamma=B_i \in \Gamma$, or both $\Delta_1 \cap \Gamma$ and  $\Delta_2 \cap \Gamma$ are vertices of $\Gamma$. By analogy with Case (\ref{almostfns:match2}), both $\Delta^*_1$ and $\Delta^*_2$ are copies of  $\mathcal{C}_{543}$, $\mathcal{C}_{1133}$ or $\mathcal{C}_{1163}$. These copies induce the identical triangulation $ c_{10}$ on $\delta$; hence, $\Delta^*_1$ and $\Delta^*_2$ are glued together along  $\delta$;

  	 	\item  \label{almostfns:match4} If $\Delta_1$ and $ \Delta_2 $ in $\mathcal{T}^{(3)}$  share a triangle $\delta$ with vertex set $\{A_{i},B_j, G\}$, then both $\Delta^*_1$ and $\Delta^*_2$ are copies of $\mathcal{C}'_{543}$ by   [Replacement 2]. Moreover, they induce the same triangulation $c_3$ of $\delta$; hence,  $\Delta^*_1$ and $\Delta^*_2$ are  glued together along $\delta$;

  	 	\item  \label{almostfns:match5} If $\Delta_1$ and $ \Delta_2 $ in $\mathcal{T}^{(3)}$  share a triangle $\delta$ with vertex set $\{A_{i}, G, I \}$ such that $\Delta_1$ has  vertex set $\{A_{i}, G, I, F\}$ and  $\Delta_2$ has  vertex set $\{A_{i}, B_j, G, I\}$.  Then $\Delta^*_1$ is a copy of $\mathcal{C}_{1163}$ by   [Replacement 5], and $\Delta^*_2$ is a copy of $\mathcal{C}'_{543}$ by   [Replacement 2]. Moreover, they induce the same
  	 	the same triangulation $c_{30}$  of $\delta$; hence, $\Delta^*_1$ and $\Delta^*_2$ are  glued together along $\delta$;

  	 	\item \label{almostfns:match6} 	If $\Delta_1$ and $ \Delta_2 $ in $\mathcal{T}^{(3)}$  share a triangle $\delta$ with vertex set $\{B_{j}, G, I \}$ such that $\Delta_1$ has  vertex set $\{B_{j}, G, I, F\}$ and  $\Delta_2$ has  vertex set $\{B_{j}, A_i, G, I\}$.  Then $\Delta^*_1$ is a copy of $\mathcal{C}_{1137}$ by   [Replacement 6],  and $\Delta^*_2$ is a copy of $\mathcal{C}'_{543}$.
  	 	Moreover, they induce   the same triangulation $c_{4}$  of $\delta$; hence, $\Delta^*_1$ and $\Delta^*_2$ are glued together along $\delta$;

  	 	\item \label{almostfns:match7}  In all other cases, for adjacent tetrahedra $\Delta_1$ and $ \Delta_2 $ in $\mathcal{T}^{(3)}$ sharing a triangle $\delta$, $\Delta^*_1$ and $\Delta^*_2$  induce the same triangulation $c_{10}$  of $\delta$; hence,  $\Delta^*_1$ and $\Delta^*_2$ are glued together along $\delta$.

  	 \end{enumerate}

  	 In total, all of the substituted 3-complexes $\Delta^*$ for each  $\Delta $ in $\mathcal{T}^{(3)}$ are glued together to give a triangulation $\mathcal{T}^*$ of $N$. 
  	 Compared with the special subdivision of \(\mathcal{T}\) in \cite{PrzytyckiS:2009}, we call the triangulation \(\mathcal{T}^*\) above a \emph{relatively special subdivision} associated with \((\mathcal{T}, \Gamma)\).

  	 Let $\mathcal{T}$ be the triangulation in  Lemma  \ref{lemma:inducedgraph}; we denote by $\mathcal{X}$ the 2-skeleton of $\mathcal{T}$, and by $\mathcal{X}^*$ the subdivision of $\mathcal{X}$ induced by the relatively special subdivision  $(\mathcal{T}, \Gamma) \rightarrow (\mathcal{T}^*, \Gamma)$. We need the following in the proof of  Theorem  \ref{thm:almostfns}. 
  	 
  	 \begin{lemma} \label{lemma:2skeleton}   The 2-complex  $\mathcal{X}^*$ is a full subcomplex of  $\mathcal{T}^*$, and it is flag-no-square.

  	 \end{lemma}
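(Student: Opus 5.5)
The plan is to reduce both claims (fullness and flag-no-square) to local statements inside a single piece \(\Delta^*\). The tools are the boundary structure of the pieces \(\mathcal{C}_{543}\), \(\mathcal{C}'_{543}\), \(\mathcal{C}_{1137}\), \(\mathcal{C}_{1163}\) and Lemma~\ref{lemma:halfsquare}.

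\textbf{Fullness.} Let \(\sigma\) be a simplex of \(\mathcal{T}^*\) whose vertices all lie in \(\mathcal{X}^*\). Every simplex of \(\mathcal{T}^*\) lies in a single piece \(\Delta^*\), and \(\mathcal{X}^*\cap\Delta^*=\partial\Delta^*\). So it suffices to show that \(\partial\Delta^*\) is a full subcomplex of \(\Delta^*\) for each of the four pieces.
\begin{itemize}
\item For \(\mathcal{C}_{543}\) and \(\mathcal{C}'_{543}\), this is Lemma~2.10 of \cite{PrzytyckiS:2009}, already used in the proof of Lemma~\ref{lemma:1neighborhood}.
\item For \(\mathcal{C}_{1137}\) and \(\mathcal{C}_{1163}\), I would check it directly. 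A simplex of \(\mathcal{C}_{1137}\) with boundary vertices that is not in the boundary must contain an edge through the interior joining two boundary vertices. Such an edge lies in \(\mathcal{C}_{590}\), in \(\overline{\mathcal{C}}_{543}\), or in the cone \(\mathcal{C}_4\) (resp.\ \(\mathcal{C}_{30}\)).
\item In \(\overline{\mathcal{C}}_{543}\) no such edge exists, by the Przytycki--\'Swi\k{a}tkowski fullness.
\item In \(\mathcal{C}_{590}\), the boundary of \(\mathcal{C}_{590}\) is \(c_6\cup c_{10}\), and fullness follows from fullness of \(\mathcal{C}_{543}\subset\mathcal{C}_{600}\) together with the description of \(\mathcal{C}_{57}\) as the \(1\)-neighbourhood of \(\Delta_0\) in Lemma~\ref{lemma:fnsC543}.
\item For the cone pieces, interior edges are of the form \([v_1,x]\) with \(v_1\) interior, so they never join two boundary vertices. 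The new edges \([a_{ij},a_{ik}]\) in \(\mathcal{C}_4\) lie in \(c_4\subset\partial\).
\end{itemize}

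\textbf{Flagness of \(\mathcal{X}^*\).} Take a clique in \(\mathcal{X}^*\). Each of its edges lies in some \(\partial\Delta^*\), hence in the subdivision of a single triangle \(\delta\) of \(\mathcal{T}\). The subdivided triangles \(c_3,c_4,c_{10},c_{30}\) are flag-no-square \(2\)-disks with full boundary (checked from the figures). Hence any clique of size \(\ge 3\) lies in one subdivided triangle, where it spans a simplex. The reason: a triangle of \(\mathcal{X}^*\) meeting three different triangles \(\delta\) of \(\mathcal{T}\) would force its three vertices onto edges of \(\mathcal{T}\) pairwise joined across faces. That is impossible because the subdivided edges of \(\mathcal{T}\) are full paths in each adjacent disk.

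\textbf{No empty squares.} This is the main step. Let \((C_1,C_2,C_3,C_4,C_1)\) be a \(4\)-cycle in \(\mathcal{X}^*\) with no diagonal. I would argue by the pattern of triangles of \(\mathcal{T}\) visited by the cycle.
\begin{itemize}
\item \emph{One triangle \(\delta\).} The no-square property of \(c_3,c_4,c_{10},c_{30}\) gives a contradiction.
\item \emph{Two adjacent triangles sharing an edge \(e\).} The cycle crosses \(e\) twice, so some vertices lie on the subdivided edge \(e\). Then a "half-square" appears in one disk: a path of length two with both ends on \(e\) and no chord. This contradicts fullness/convexity of the boundary path of the disk. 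I expect to verify this from the figures, using that each edge of \(\mathcal{T}\) is subdivided into at most two edges.
\item \emph{Triangles sharing only a vertex, or three or more triangles.} These are handled similarly. The cycle must pass through genuine vertices or edge-midpoints of \(\mathcal{T}\), and since \(\mathcal{T}\) is simplicial the cycle closes up in the link of a vertex of \(\mathcal{T}\) with too few edges.
\end{itemize}
The obstacle is making the two-triangle case rigorous for the nonstandard disks \(c_{30}\) and \(c_4\). There I would list the pairs of vertices at distance two along a boundary edge explicitly and check that the common neighbours lie on that edge.
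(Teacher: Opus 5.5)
\textbf{Verdict: the flag and no-square parts have a genuine gap.}

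Your fullness argument is fine and is essentially the paper's: every simplex of $\mathcal{T}^*$ lies in one $\Delta^*$, and $\mathcal{X}^*\cap\Delta^*=\partial\Delta^*$ is full there. The gap is that you never use Sparsity~I or Sparsity~III of Lemma~\ref{lemma:inducedgraph}, and the statement is false without them. The local fact you rely on, that $c_3,c_4,c_{10},c_{30}$ have full boundary, fails for $c_3$ and $c_4$. Write $M_{XY}$ for the subdivision point of an edge $[X,Y]$.
\begin{itemize}
\item In $c_4$ (the four-triangle midpoint subdivision of $\{B,C,D\}$), every vertex lies on the boundary. The points $M_{BC},M_{BD},M_{CD}$ are pairwise joined by interior edges.
\item In $c_3$ (subdividing $\{A,B,D\}$, with $[A,B]$ unsubdivided), $M_{AD}$ is adjacent to both $B$ and $D$. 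So $B,M_{AD},D$ is a chordless path of length two whose ends lie on the subdivided edge $[B,D]$ while its middle vertex does not. Your proposed common-neighbour check would find exactly this.
\end{itemize}

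These chords produce the configurations your cases declare impossible.
\begin{itemize}
\item Suppose $B_j$ has a cycle of faces $\{B_j,D_i,D_{i+1}\}$, $1\le i\le k$, each lying in a tetrahedron $\{B_j,A_i,D_i,D_{i+1}\}$ with $[A_i,B_j]\subset\Gamma$ (for instance a $\Gamma$-edge of valence $3$ or $4$ in $\mathcal{T}$). All these faces become copies of $c_4$. The points $M_{B_jD_i}$ then give an empty triangle of $\mathcal{X}^*$ for $k=3$, since no triangle of $\mathcal{T}$ contains three edges at $B_j$. For $k=4$ they generically give an empty square.
\item So your claim that vertices on different edges of $\mathcal{T}$ cannot be pairwise joined across faces is false. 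Your step ``the cycle closes up in a link with too few edges'' also gives no contradiction, since simpliciality of $\mathcal{T}$ allows such links of length $3$ or $4$.
\item Similarly, adjacent faces $\{B_k,A_i,D\}$ and $\{B_k,A_j,D\}$ with $[A_i,B_k],[A_j,B_k]\subset\Gamma$ are both copies of $c_3$. Then $(B_k,M_{A_iD},D,M_{A_jD},B_k)$ is an empty square of $\mathcal{X}^*$ (Figure~\ref{figure:c3}), which defeats your two-adjacent-triangles case.
\end{itemize}

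What is missing is to track where the chords of $c_3$ and $c_4$ can occur, namely only on faces of $\mathcal{T}$ meeting $\Gamma$. You then have to exclude every clique or $4$-cycle that uses them, with three tools:
\begin{itemize}
\item Sparsity~I, which forbids cycles of length $3$ or $4$ of $c_4$-faces around a $B_j$;
\item Sparsity~III, which forbids two $c_3$-faces sharing an edge $[B_j,I]$ or $[A_i,I]$;
\item the fact that fullness and bipartiteness of $\Gamma$ forbid a triangle of $\mathcal{T}$ with two $A$-vertices or two $B$-vertices.
\end{itemize}
This is exactly what the paper's long case analysis does, and it begins by remarking that Sparsity~I is indispensable for this lemma.
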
 
  	 \begin{proof}$\mathcal{X}^*$ is constructed  from $\mathcal{X}$ by replacing each 2-simplex $\delta$ in  $\mathcal{X}^{(2)}$ with a 2-complex $\delta^*$, where each $\delta^*$ is isomorphic to  $c_3$, $c_4$, $c_{10}$, or $c_{30}$. 
  	 	If a vertex set $\{a_0,a_1, \dots,a_k\} $ of  $ \mathcal{X}^*$  spans a $k$-simplex in $\mathcal{T}^*$, then one  easily verify  that  $k$ is either  $1$ or $2$. Furthermore, by the construction of  $\mathcal{T}^*$,   there exists a 2-simplex $\delta$ of $\mathcal{X}$ containing  all vertices  $a_i$. 		For every 3-simplex $\Delta \in \mathcal{T}^{(3)}$ and every 2-face $\delta$ of $\Delta$, $\delta^*$ is a full subcomplex of $\Delta^*$. So the vertex set $\{a_0,a_1, \dots,a_k\} $ spans a $k$-simplex in $\mathcal{X}^*$.

  	 	We first remark that we need  the   Sparse I condition  to ensure  $\mathcal{X}^*$ is  flag-no-square. For example,  suppose  an  edge $[A,B]$ of  $\Gamma$ has  its  edge link   in $\mathcal{T}$ a $4$-gon. Let $\Delta_1, \dots,  \Delta_4$ be the four tetrahedra in $\mathcal{T}$   containing $[A,B]$ as one of their edges. For each  $i=1,2,3,4$,  denote by $\delta_i$ the 2-face of  $ \Delta_i$ that contains the vertex  $B$,  but not $A$.  Each $\delta_i$  is then  subdivided into $\delta^*_i$, a copy of  $c_4$, in the procedure from  $\mathcal{X}$  to $\mathcal{X}^*$.  It is readily  seen that  there is an empty square in $\cup^4_{i=1}\delta^*_i$, which remains empty in  $\mathcal{X}^*$.

  	 	Suppose $k\geq 2$, and let $\{C_0, C_1,\cdots,  C_k\}$ be a set of vertices in $\mathcal{X}^*$  pairwise connected by edges.

  	 	Firstly, since each of  $c_3$, $c_4$, $c_{10}$ and $c_{30}$ is flag,  and $\mathcal{T}$ is a simplicial triangulation, suppose  there exists  a 2-simplex $\delta \in \mathcal{X}^{(2)}$ such that, for instance,  the vertex  $C_0$ lies in  the interior of $\delta$. Then $\delta^*$ is  isomorphic to  $c_{10}$ or $c_{30}$,  all the vertices $C_i$ lie in  $\delta^*$,  $k=2$, and $\{C_0, C_1,  C_2\}$ spans a $2$-complex in $\mathcal{X}^*$;

  	 	Secondly, we now suppose that all the vertices in  $\{C_0, C_1,\cdots,  C_k\}$ lie in the 1-skeleton of  $\mathcal{X}$, and 	 there exists  an edge  $e \in \mathcal{X}^{(1)}$ such that, for example,  the vertex  $C_0$ lies in  the interior of $e$:
  	 	\begin{enumerate}
  	 		\item If all the edges $[C_0, C_i]$ lie in $\mathcal{X}^{(1)}$, then $k=1$, and $\{C_0, C_1\}$ spans an  edge in $\mathcal{X}^*$;
  	 		\item If there is a 2-simplex $\delta \in \mathcal{X}^{(2)}$ (with  $e$ being   an edge of $\delta$) such that the open edge $(C_0,C_1)$ lies in the interior of $\delta$, then $\delta^*$ is a copy of $c_3$ or $c_4$.
  	 		Then there exists a 3-simplex $\Delta$ in $\mathcal{T}$, such that $\delta$ is a 2-face of $\Delta$, and  one edge of it, say $[A_i, B_j]$, is also an edge  in $\Gamma$, in turn the 3-complex  $\Delta^*$  is a copy of $\mathcal{C}'_{543}$.
  	 		We now consider all 3-simplices $\Delta_1=\Delta, \Delta_2, \cdots, \Delta_l$ in  $\mathcal{T}$ with $[A_i, B_j]$ as one of their edges.  Consider $\cup^l _{i=1}\Delta^*_i$, which is a union of $l$ copies of  $\mathcal{C}'_{543}$ gluing along copies of $c_3$ surrounding  the edge $[A_i, B_j]$. The boundary of $\cup^l _{i=1}\Delta^*_i$ is a union of $l$ copies of 
  	 		$c_{30}$ surrounding  the vertex  $A_i$, and $l$ copies of 
  	 		$c_{4}$ surrounding  the vertex $B_j$. There are four cases where $[C_0, C_1]$ lies in $\delta^*$:
  	 		
  	 		\begin{itemize}

  	 			\item 
  	 			$\delta^*$ is  a copy of $c_4$, and there is a 2-simplex in $\delta^*$ with vertex set $\{B_j,C_0,C_1\}$. Consider the vertex $C_2$. If $C_2$ lies in $\Delta$, then $C_2$ lies in $\delta$ (it may be $B_j$ or an interior  point of one edge of $\delta$).   In either subcases, we have $k=2$, and $\{C_0, C_1, C_2\}$ 	spans a $2$-simplex in $\mathcal{X}^*$.
  	 			Otherwise,  $\{C_0, C_1, \dots  C_k\} \cap \Delta=\{C_0,C_1\}$. All  2-simplices of  $\mathcal{T}$ containing $C_0$ are denoted by $\delta_{0,p}$ for $1 \leq p \leq  i_0$, arranged circularly.
  	 			Similarly, all  2-simplices of  $\mathcal{T}$ containing $C_1$ are denoted by $\delta_{1,q}$ for $1 \leq q \leq  i_1$, also arranged circularly. Here $\delta_{0,1}=\delta_{1,1}=\delta$, and both $\delta_{0,2}$ and $\delta_{1,2}$ are 2-faces of $\Delta$.  Moreover, $\delta^*_{0,2}$ and  $\delta^*_{1,2}$ are isomorphic to $c_4$.  Then all  $C_i$ for $2 \leq i \leq k$ lie in the intersection of  
  	 			$$(\cup^{i_0}_{p=2} \delta_{0,p}) \cap (\cup^{i_1}_{q=2} \delta_{1,q}) \setminus \{B_j\}.$$ We obtain $k=2$, and $C_2$ lies in $\delta_{0,2} \cap \delta_{1,2}$. Meanwhile, the edge valence of  $[A_i,B_j]$ is three.  This yields a contradiction to the  Sparse I condition;

  	 			\item 	$\delta^*$ is  a copy of $c_4$, but there is  no  2-simplex in $\delta^*$ with vertex set $\{B_j,C_0,C_1\}$. 	 We denote the  vertices of $\Delta$ by $\{A_i,B_j,C,D\}$, and we assume $C_0$ lie in the interior of $[B_j,C]$,  we assume $C_1$ lie in the interior of $[C,D]$.
  	 			We now also  consider the vertex $C_2$. If $C_2$ lies in $\Delta$, then $C_2$ lies in $\delta$.   We have $k=2$, and $\{C_0, C_1, C_2\}$ 	spans a $2$-simplex in $\mathcal{X}^*$.
  	 			Otherwise,  $\{C_0, C_1, \dots  C_k\} \cap \Delta=\{C_0,C_1\}$. 
  	 			All  2-simplices of  $\mathcal{T}$ containing $C_0$ are denoted by $\delta_{0,p}$ for $1 \leq p \leq  i_0$, arranged circularly.
  	 			Similarly, all  2-simplices of  $\mathcal{T}$ containing $C_1$ are denoted by $\delta_{1,q}$ for $1 \leq q \leq  i_1$, also arranged circularly. Here $\delta_{0,1}=\delta_{1,1}=\delta$, and both $\delta_{0,2}$ and $\delta_{1,2}$ are 2-faces of $\Delta$.  Moreover, $\delta^*_{0,2}$ is isomorphic to $c_3$;  $\delta^*_{1,2}$ are isomorphic to $c_{30}$.  Then all  $C_i$ for $2 \leq i \leq k$ lie in the intersection of  
  	 			$$(\cup^{i_0}_{p=2} \delta_{0,p}) \cap (\cup^{i_1}_{q=2} \delta_{1,q}) \setminus \{C\}.$$

  	 			We assume $C_2$ lies in $\delta_{0,p} \cap \delta_{1,q}$ for some $p, q \geq 2$. Since neither  $\delta^*_{0,p} $ nor $ \delta^*_{1,q}$  is isomorphic to $c_3$, $C_2$ lies in the interior of an edge of $\delta_{0,p} $ and $ \delta_{1,q}$, then  $\delta_{0,p} \cap \delta_{1,q}$ is an edge where $C_2$ lies. 
  	 			We obtain $k=2$, and $C_2$ lies in $\delta_{0,p} \cap \delta_{1,q}$. Each $\delta^*_{0,p} $ and  $ \delta^*_{1,q}$  is isomorphic to $c_4$. But this implies two vertices of $\delta_{0,p} $ (and $\delta^*_{1,q} $) are is $\Gamma$, say  $B_k$ and $B_h$, a contradiction  to our assumption in Lemma 
  	 			\ref{lemma:inducedgraph}.

  	 			\item 	$\delta^*$ is  a copy of $c_3$, and the segment $[C_0,C_1]$ (or $[C_1,C_0]$) is parallel to $[A_i,B_j]$ within $\delta$.  Consider the vertex $C_2$. If $C_2$ lies in $\Delta$, then $C_2$ lies in $\delta$ (it may be $B_j$ or another vertex of $\delta$).   In either subcases, we have $k=2$, and $\{C_0, C_1, C_2\}$ 	spans a $2$-simplex in $\mathcal{X}^*$.
  	 			Otherwise,  $\{C_0, C_1, \dots  C_k\} \cap \Delta=\{C_0,C_1\}$.  We denote the  vertices of $\Delta$ by $\{A_i,B_j,C,D\}$, and we assume $C_0$ lie in the interior of $[A_i,C]$,  we assume $C_1$ lie in the interior of $[B_j,C]$.
  	 			All  2-simplices of  $\mathcal{T}$ containing $C_0$ are denoted by $\delta_{0,p}$ for $1 \leq p \leq  i_0$, arranged circularly.
  	 			Similarly, all  2-simplices of  $\mathcal{T}$ containing $C_1$ are denoted by $\delta_{1,q}$ for $1 \leq q \leq  i_1$, also arranged circularly. Here $\delta_{0,1}=\delta_{1,1}=\delta$  and both $\delta_{0,2}$ and $\delta_{1,2}$ are 2-faces of $\Delta$.  Up to reordering $C_0$ and $C_1$, $\delta^*_{0,2}$ is isomorphic to $c_{30}$;  $\delta^*_{1,2}$ are isomorphic to $c_{4}$. 
  	 			Then all  $C_i$ for $2 \leq i \leq k$ lie in the intersection of  
  	 			$$(\cup^{i_0}_{p=2} \delta_{0,p}) \cap (\cup^{i_1}_{q=2} \delta_{1,q}) \setminus \{C\}.$$ 
  	 			We assume $C_2$ lies in $\delta_{0,p} \cap \delta_{1,q}$ for some $p, q \geq 2$.
  	 			Note that  $\delta_{0,p} $ has one vertex $A_i$, so   $\delta^*_{0,p} $  is isomorphic to  $c_3$, $c_{10}$ or $c_{30}$; Similarly,  $\delta_{1,q} $ has one vertex $B_i$, so  $ \delta^*_{1,q}$ is isomorphic to $c_3$, $c_{4}$ or $c_{10}$.  But $C_0$ is an interior point of one edge od $\delta$, we have  $\delta^*_{0,p} $  is isomorphic to  $c_3$, and then the edge $\delta_{0,p} \cap \delta_{1,q}$ has one vertex $B_l$ for some $l$. Which implies $\delta_{1,q}$ has two type $B$ vertices in $\Gamma$, say $B_j$ and $B_l$. A contradiction  to our assumption in Lemma 
  	 			\ref{lemma:inducedgraph};

  	 			\item 	$\delta^*$ is  a copy of $c_3$, and  $B_j$ is an end vertex of $e$.
  	 			Consider the vertex $C_2$. If $C_2$ lies in $\Delta$, then $C_2$ lies in $\delta$ (it may be $A_i$ or another point in the interior of an edge   of $\delta$).   In either subcases, we have $k=2$, and $\{C_0, C_1, C_2\}$ 	spans a $2$-simplex in $\mathcal{X}^*$.

  	 			Otherwise,  $\{C_0, C_1, \dots  C_k\} \cap \Delta=\{C_0,C_1\}$.  We denote the  vertices of $\Delta$ by $\{A_i,B_j,C,D\}$, and we assume $C_0$ lie in the interior of $e=[A_i,C]$.
  	 			All  2-simplices of  $\mathcal{T}$ containing $C_0$ are denoted by $\delta_{0,p}$ for $1 \leq p \leq  i_0$, arranged circularly.
  	 			Here $\delta_{0,1}=\delta$  and  $\delta_{0,2}$ is a 2-face of $\Delta$.
  	 			We assume $C_2$ lies in $\delta_{0,p}$ for some $q \geq 2$. Then $\delta^*_{0,p}$ is isomorphic to $c_3$ as there ia the vertex $A_i$ in isomorphic to $\delta_{0,p}$. So there is  also a type $B$ vertex of $\delta_{0,p}$, say, $B_{l}$. But then consider the open edge $(C_1,C_2)$ which lies in the interior of a 2-simplex $\delta'$, recall $C_1=B_j$, and now $B_j$ and $B_l$ are vertices of $\delta'$. A contradiction  to our assumption in Lemma 
  	 			\ref{lemma:inducedgraph}.

  	 		\end{itemize}

  	 	\end{enumerate}
  	 	
  	 	Thirdly,  if all vertices $\{C_0, C_1,\cdots,  C_k\}$ lie in the 0-skeleton  $\mathcal{X}^{(0)}$, then all  $C_i$ belong to the flag-no-square graph $\Gamma$,  so we  have $k \leq 1$, which yields a contradiction.
  	 	
  	 	In total,   $\mathcal{X}^*$ is a flag  2-complex.

  	 	We now prove  square-freeness for $\mathcal{X}^*$. Suppose for contradiction that   $(C_1, C_2, C_3,C_4, C_1)$ forms  an empty square    in $\mathcal{X}^*$.

  	 	Firstly, suppose there exists a 2-simplex $\delta \in \mathcal{X}^{(2)}$ such that, for instance, vertex $C_1$ lies in the interior of $\delta$. Then the substituted 2-complex $\delta^*$ is isomorphic to either $c_{10}$ or $c_{30}$. If $C_2$ also lies in the interior of $\delta$, then both $C_3$ and $C_4$ are contained in $\delta$. Since $c_{10}$ and $c_{30}$ are each square-free, $(C_1,C_2,C_3,C_4,C_1)$ cannot form an empty square.
  	 	Now consider the case where both $C_2$ and $C_4$ lie on the boundary of $\delta$:
  	 	\begin{enumerate}
  	 		\item $\delta^* \cong c_{30}$. By Figure \ref{figure:c30}, an edge $[C_2,C_4]$ exists inside the substituted 2-complex $\delta^*$, so $(C_1,C_2,C_3,C_4,C_1)$ is not an empty square.
  	 		
  	 		\item $\delta^* \cong c_{10}$. By Figure \ref{figure:c10}, one of two possibilities holds: 
  	 		either an edge $[C_2,C_4]$ exists in $\delta^*$, which again prevents $(C_1,C_2,C_3,C_4,C_1)$ from being an empty square; or there exists another 2-simplex $\delta_1\in \mathcal{X}^{(2)}$ whose substituted complex $\delta_1^*$ is isomorphic to $c_{10}$, with $[C_2,C_3]\cup[C_3,C_4]\subset\delta_1$ and $C_3$ contained in the interior of $\delta_1$. This forces $\delta\cap\delta_1$ to equal the union of two distinct edges, contradicting the hypothesis that $\mathcal{T}$ is a simplicial triangulation.

  	 	\end{enumerate}

  	 	Secondly, we now assume all four vertices $C_1,C_2,C_3,C_4$ belong to $\mathcal{X}^{(1)}$:
  	 	\begin{enumerate}
  	 		\item \label{2complex:2a}
  	 		Not all edges $[C_i,C_{i+1}]$ can lie in $\mathcal{X}^{(1)}$. Suppose to the contrary that every such edge is contained in $\mathcal{X}^{(1)}$. With the exception of edges in $\Gamma$, every edge $e\in\mathcal{X}^{(1)}$ gets subdivided into two subedges. Since $\Gamma$ is square-free, it is impossible for all $[C_i,C_{i+1}]$ to lie in $\Gamma$. We therefore fix an edge $e=[C_1,C_3]\in\mathcal{X}^{(1)}$ that has been subdivided into the two subedges $[C_3,C_4]$ and $[C_4,C_1]$. In this case, both $[C_1,C_2]$ and $[C_2,C_3]$ are edges of $\Gamma$. This contradicts the fact that $\Gamma$ is a full subgraph of $\mathcal{T}$.

  	 		\item\label{2complex:2b}  From (\ref{2complex:2a}), we now assume the open edge $[C_1,C_2]$ lies in the interior of  a $2$-simplex $\delta \in \mathcal{X}^{(2)}$, where the substituted 2-complex  $\delta^*$ is   isomorphic to  $c_4$. The 2-simplex $\delta$ is the common face of 3-simplices $\Delta_1$ and $\Delta_2$  in $\mathcal{T}^{(3)}$, where  the substituted 3-simplices $\Delta^*_1$ and  $\Delta^*_2$ are isomorphic to  $\mathcal{C}'_{543}$ and $\mathcal{C}_{1137}$, respectively. We assume the vertex set of $\Delta_1$ is $\{A_i,B_j,D,F\}$, so the vertex set of $\delta$ is $\{B_j,D,F\}$.
  	 		There are two edges $e_1$ and $e_2$ of $\delta$, such that $C_i$ is the midpoint of $e_i$ for $i=1,2$, respectively:
  	 		
  	 		\begin{itemize}
  	 			
  	 			\item If the common end points of $e_1$ and $e_2$ is  $B_j$, that is, $e_1=[B_j,D]$ and $e_2=[B_j,F]$. 	
  	 			We denote the 2-simplices in $\mathcal{X}^{(2)}$ with $e_1$ as one edge by  $\delta_{1,i}$ for  $1 \leq i \leq {m_1}$,  arranged circularly; Similarly,  the 2-simplices in $\mathcal{X}^{(2)}$ with $e_2$ as one edge  are denoted by   $\delta_{2,j}$ for  $1 \leq j \leq {m_2}$,  also  arranged circularly. We also assume $\delta_{1,1}=\delta_{2,1}=\delta$;     $\delta_{1,2}$ and  $\delta_{2,2}$ lie in $\Delta_1$, so both  the substituted 2-complexes $\delta^*_{1,2}$ and  $\delta^*_{2,2}$ are isomorphic to $c_3$; 
  	 			$\delta_{1,m_1}$ and  $\delta_{2,m_2}$ lie in $\Delta_{2}$, so both  the substituted 2-complexes $\delta^*_{1,m_1}$ and  $\delta^*_{2,m_2}$ are isomorphic  $c_{10}$. 
  	 			It is easy to see that $C_4$ cannot coincide with  $B_j$ or  the middlepoint of the edge $[D,F]$. Furthermore, if $C_4=D$, then $C_3 \neq F$. We suppose  $[C_2,C_3]$ is contained in $\delta_{2,j}$ with some $j \neq 1$. The substituted 2-complex $\delta^*_{2,j}$ is  isomorphic to either  $c_3$ or $c_4$. If the  2-complex $\delta^*_{2,j}$ is  isomorphic to $c_3$,  the vertex set of $\delta_{2,j}$ is   $\{A_k,F,B_j\}$ for some $k$. Meanwhile, the substituted 2-complex from the 2-simplex with vertex set $\{A_k,F,D\}$ contains the edge   $[C_3,D]$. This contradicts the fact  that  the substituted 2-complex  is isomorphic to either $c_{10}$ or $c_{30}$;
  	 			If  the 2-complex $\delta^*_{2,j}$ is  isomorphic to $c_4$, let the vertex set of $\delta_{2,j}$ be $\{G,F,B_j\}$, where $G \notin\Gamma$. Then the substituted 2-complex from the 2-simplex having vertex set $\{G,B_j,D\}$ or  $\{G,F,D\}$ contains the  edge $[C_3,D]$. Both cases lead  to a contradiction, as  the substituted 2-complexes are isomorphic to $c_{10}$.

  	 			So now  we assume 	$C_3\neq F$ and $C_4 \neq D$, and $[C_2,C_3]$  lies in the  substituted 2-complex $\delta^*_{2,j}$ for some $j\neq 1$, and $[C_1,C_4]$  lies in the  substituted 2-complex  $\delta^*_{1,i}$ for some $i \neq 1$. 
  	 			Since $C_1$ and $C_2$ are  the midpoints of $e_1$  and $e_2$, respectively, the open edges  $(C_2,C_3)$  and $(C_1,C_4)$ lie in the interiors of  $\delta_{2,j}$ and  $\delta_{1,i}$, respectively.  This implies that each   $\delta^*_{1,i}$ and   $\delta^*_{2,j}$ 
  	 			is isomorphic to one of $c_3$ or $c_4$:
  	 			
  	 			\begin{itemize}

  	 				\item Both  $\delta^*_{1,i}$ and   $\delta^*_{2,j}$ 
  	 				are isomorphic to $c_4$.  We assume the vertex set of  $\delta_{1,i}$ is $\{B_j,F,H\}$, and the vertex set of  $\delta_{2,j}$ is $\{B_j,D, L\}$:
  	 				
  	 				\begin{itemize}

  	 					\item $C_3$ is the middlepoint of $[B_j,H]$, and  $C_4$ is the middlepoint of $[B_j,L]$.  
  	 					Then there exists a 2-simplex $\delta_3$  in $\mathcal{X}^{(2)}$ such that the substituted 2-complexes $\delta^*_{3}$ is isomorphic to $c_4$,  and  open edge  $(C_3,C_4)$ lies in the interior of $\delta_3$. 
  	 					Consequently, the union of $\delta$, $\delta_{1,i}$,  $\delta_{2,j}$, and $\delta_{3}$ forms a length-4 poisonous circle. This contradicts our assumption of Sparsity I from Lemma \ref{lemma:inducedgraph};
  	 					
  	 					\item $C_3$ is the middlepoint of $[B_j,H]$, and  $C_4$ is the middlepoint of $[D,L]$.  Since there exists an edge $[C_3,C_4]$ in $\mathcal{X}^*$, we have  $H=L$ (it is easy to see $H \neq D$). But then $\delta_{1,i}=\delta_{2,j}$, it is still a contradiction;

  	 					\item $C_3$ is the middlepoint of $[H,F]$, and  $C_4$ is the middlepoint of $[B_j,L]$. Symmetry implies this subcase cannot occur, similarly to the prior case;

  	 					\item $C_3$ is the middlepoint of $[H,F]$,  and  $C_4$ is the middlepoint of $[D,L]$.  Since there exists an edge $[C_3,C_4]$ in $\mathcal{X}^*$, we have  $H=L$, and  open edge  $(C_3,C_4)$ lies in the interior of $\delta_3$ having vertex set $\{D,F,L\}$.  This contradicts that the substituted 2-complexes $\delta^*_{3}$ is isomorphic to $c_{10}$.
  	 					
  	 				\end{itemize}

  	 				\item  the substituted 2-complexes $\delta^*_{1,i}$ 
  	 				is isomorphic to $c_3$, we have the vertex set of  $\delta_{1,i}$  is $\{A_k, B_i,F\}$, and $C_3$ is the middlepoint of the edge $[A_k,F]$. We remark here that $A_k$ is not  necessarily to be  the vertex $A_i$. Then

  	 				\begin{itemize}
  	 					
  	 					\item

  	 					either  $C_4=A_k$. But there is no edge $[C_1,A_k]$ in $\mathcal{X}^*$, a contradiction;

  	 					\item or  there is a 2-simplex $\delta_3$ in $\mathcal{X}$ having vertex set $\{A_k,B_l,F\}$ such that $(C_3,C_4)$ lies in the interior of $\delta_3$.  Here  $B_l$ is not  necessarily to be  the vertex $B_j$.
  	 					Then $C_4=B_l$ or $C_4$ is the middlepoint of $[B_l,F]$. Either subcase  is  impossible by consider the 2-simplex 	 $\delta_{2,j}$  where $B_j$ and $D$ are vertices of it;

  	 				\end{itemize}

  	 				\item Symmetry implies that  the substituted  2-complexes  $\delta^*_{2,j}$ 
  	 				is isomorphic to $c_3$  subcase cannot occur, similarly to the prior subcase; 
  	 				
  	 			\end{itemize}

  	 			\item If the common end points of $e_1$ and $e_2$ is $D$, that is, $e_1=[B_j,D]$ and $e_2=[D,F]$. 	
  	 			We denote the 2-simplices in $\mathcal{X}^{(2)}$ with $e_1$ as one edge by  $\delta_{1,i}$ for some $1 \leq i \leq {m_1}$, arranged circularly; Similarly, the 2-simplices in $\mathcal{X}^{(2)}$ with $e_2$ as one edge by  $\delta_{2,j}$ for some  $1 \leq j \leq {m_2}$. We also assume that $\delta_{1,1}=\delta_{2,1}=\delta$,  $\delta_{1,2}$ and  $\delta_{2,2}$ lie in $\Delta_1$,   $\delta_{1,m_1}$ and  $\delta_{2,m_2}$ lie in $\Delta_{2}$. So the substituted 2-complexes $\delta^*_{1,2}$    and  $\delta^*_{2,2}$ are isomorphic to    $c_3$ and  $c_{30}$, respectively.  Both the substituted 2-complexes $\delta^*_{1,m_1}$  and  $\delta^*_{2,m_2}$
  	 			are  isomorphic to   $c_{10}$.

  	 			\begin{itemize}
  	 				\item 	
  	 				If $C_4=B_j$, then  $C_3\neq F$. There is a 2-simplex $\delta_3$ with vertex set $\{D,F,B_l\}$ for some $l$ such that 
  	 				substituted 2-complexes $\delta^*_{3}$ is  isomorphic to    $c_4$ and $(C_2,C_3)$ lies in the interior of $\delta_3$. Furthermore, $C_3$ is the middlepoint of either $[B_k,D]$ or $[B_k,F]$. Then consider the edge $[C_3,B_j]=[C_3,C_4]$, either  the open edge   $(C_3,B_j)$ lies in the interior of   2-simplex $\delta_4$ with vertex set $\{B_k,A_p, D\}$ for some $p$ such that 
  	 				substituted 2-complexes $\delta^*_{4}$ is  isomorphic to    $c_3$, but this contradicts to the fact that $C_3$ is the middlepoint of $[D,B_k]$; or  $[C_3,B_j]$ lies in $\mathcal{X}^{(1)}$ and then  $B_j=B_l$, which is also a contradiction.

  	 				\item 
  	 				If $C_3=F$, then $C_4 \neq B_j$:
  	 				\begin{itemize}
  	 					\item There is a 2-simplex $\delta_3$ with vertex set $\{B_j,D,G\}$  such that 
  	 					substituted 2-complexes $\delta^*_{3}$ is  isomorphic to    $c_4$ and $(C_1,C_4)$ lies in the interior of $\delta_3$. Either $C_4$ is the middlepoint of $[D,G]$ or $[B_j,G]$. But then $F=G$ or $F=B_j$, a contradiction.

  	 					\item 	or $\delta_3$ with vertex set $\{B_j,A_l,D\}$  for some $l$ such that 
  	 					substituted 2-complexes $\delta^*_{3}$ is  isomorphic to    $c_3$ and $(C_1,C_4)$ lies in the interior of $\delta_3$. Further, $C_4$ is the middlepoint of the edge $[A_l,D]$. Since there is an edge $[C_3,C_4]$, we have $D=F$, a contradiction. 
  	 				\end{itemize}

  	 				\item  The open edge  	$(C_2,C_3)$  lies in the interior of  $\delta_{2,p}$ for some $p \geq 2$ with vertex set $\{D,F,B_l\}$ for some $l$, so $\delta^*_{2,p}$ is isomorphic to $c_4$. 
  	 				Furthermore,   the open edge  	$(C_1,C_4)$   lies in the interior of 	$\delta_{1,q}$ for some $q \geq 2$; Then either 	$\delta_{1,q}$ has vertex set $\{A_k,B_j,D\}$ for some $k$ and    $\delta^*_{1,q}$ is isomorphic to $c_3$, or  $\delta_{1,q}$ has vertex set $\{G,B_j,D\}$ for some $l$ and  $\delta^*_{1,q}$ is isomorphic to $c_4$:

  	 				\begin{itemize}
  	 					
  	 					\item  $C_3$ is the middlepoint of $[D,B_l]$,  $\delta_{1,q}$ has vertex set $\{A_k,B_j,D\}$, and $C_4$ is the middlepoint of $[A_k,D]$. Then the open edge  $(C_3,C_4)$ lies in the interior of $\delta_3$ with vertex set $\{A_k,B_l,D\}$, and  $\delta^*_3$ is isomorphic to $c_3$.  Now the 2-simplices  $\delta_{1,q}$ and $\delta_3$  are adjacent, a contradiction to our assumption Sparse III of $\mathcal{T}$ in Lemma \ref{lemma:inducedgraph};

  	 					\item $C_3$ is the middlepoint of $[D,B_l]$,  $\delta_{1,q}$ has vertex set $\{G,B_j,D\}$, and $C_4$ is the middlepoint of $[G,D]$. Then the open edge  $(C_1,C_4)$ lies in the interior of $\delta_{1,q}$ with vertex set $\{D,G,B_j\}$, and  $\delta^*_{1,q}$ is isomorphic to $c_4$. But this implies there is an vertex $A_k$ and there is  a 3-simplex $\Delta'$ in $\mathcal{T}^{(3)}$
  	 					with vertex set $\{A_k,B_j,D,G\}$. Now thw 2-simplices with vertex sets $\{A_k,B_j,D\}$ and $\{A_i,B_j,D\}$ are adjacent, a contradiction to our assumption Sparse III of $\mathcal{T}$ in Lemma \ref{lemma:inducedgraph};

  	 					\item  $C_3$ is the middlepoint of $[D,B_l]$,   $\delta_{1,q}$ has vertex set $\{G,B_j,D\}$ and $C_4$ is the middlepoint of $[G,B_j]$.  Then the open edge  $(C_3,C_4)$ lies in the interior of $\delta_{3}$  such that   $\delta^*_{3}$ is isomorphic to $c_4$. But this implies there is an vertex $AB_l=B_j$, a contradiction;

  	 					\item  $C_3$ is the middlepoint of $[F,B_l]$,  $\delta_{1,q}$ has vertex set $\{A_k,B_j,D\}$,  and $C_4$ is the middlepoint of $[A_k,D]$. Then the open edge  $(C_3,C_4)$ lies in the interior of $\delta_{3}$  such that   $\delta^*_{3}$ is isomorphic to $c_3$. It is ready to see  a contradiction;

  	 					\item $C_3$ is the middlepoint of $[F,B_l]$,  $\delta_{1,q}$ has vertex set $\{G,B_j,D\}$, and $C_4$ is the middlepoint of $[G,D]$.  Then the open edge  $(C_3,C_4)$ lies in the interior of $\delta_{3}$  such that   $\delta^*_{3}$ is isomorphic to $c_4$. In particular, this implies $F= D$ or $F=G$,   a contradiction;

  	 					\item  $C_3$ is the middlepoint of $[F,B_l]$,   $\delta_{1,q}$ has vertex set $\{G,B_j,D\}$, and $C_4$ is the middlepoint of $[G,B_j]$. Then the open edge  $(C_3,C_4)$ lies in the interior of $\delta_{3}$  such that   $\delta^*_{3}$ is isomorphic to $c_4$. In particular, this implies $B_l=B_j$,   a contradiction.

  	 				\end{itemize}

  	 			\end{itemize}
  	 			
  	 			\item	Symmetry implies the  case that   the common end points of $e_1$ and $e_2$ is $F$ cannot occur, similarly to the prior subcase.

  	 		\end{itemize}

  	 		\item   We now assume the open edge  $(C_1,C_2)$ lies in the interior  of a  2-simplex $\delta \in \mathcal{X}^{(2)}$, where the substituted 2-complex $\delta^*$ is   isomorphic to   $c_3$.  We assume the vertex set of $\Delta_1$ is $\{A_i,B_j,D,F\}$, so the vertex set of $\delta$ is $\{A_i,B_j,F\}$:
  	 		
  	 		\begin{itemize}

  	 			\item If $C_i$ is the middle point of the edge $e_i$ of $\delta$ for $i=1,2$, respectively, where $e_1=[A_i,F]$ and  $e_2=[B_j,F]$. It is easy to see $C_3 \neq B_j$ and $C_3 \neq F$. So the open edge $(C_2,C_3)$ lies in the interior of a 2-simplex $\delta_3$.  If substituted 2-complex $\delta^*_1$ is   isomorphic to   $c_4$, wo go the the previous case; If substituted 2-complex $\delta^*_3$ is   isomorphic to   $c_3$, so $\delta_1$ has vertex set $\{A_l,B_j,F\}$,  then $\delta$ and $\delta_1$ are adjacent 2-simplexes,  a contradiction to our assumption Sparse III of $\mathcal{T}$ in Lemma \ref{lemma:inducedgraph};

  	 			\item  If $C_1=B_i$ is a vertex of $\Gamma$ and $C_2$ is the middle point of the edge $e_2=[A_i,F]$ of $\delta$. 
  	 			It is easy to see $C_3 \neq A_i$, and $C_3$ is not the middle point of $[B_j,F]$.

  	 			If $C_3=F$, we consider $C_4$:

  	 			\begin{itemize}

  	 				\item $C_4=A_k$ for some $k$, then there is no edge $[F,A_k]$ in  $\mathcal{X}^{(2)}$, a contradiction;
  	 				
  	 				\item  There an edge $[B_j,G]$  in  $\mathcal{X}^{(1)}$ such that $C_4$ is the middle point of $[B_j,G]$. But then consider the edge $[C_3,C_4]$, we have $F=G$, a contradiction.

  	 			\end{itemize}

  	 			We now have $C_3 \neq F$, and the open edge $(C_2,C_3)$ lies in the interior of a 2-simplex $\delta_3$. The 2-complex $\delta^*_3$ is   isomorphic to   $c_3$, so  $\delta_3$ has vertex set $\{A_i,F,B_l\}$ for some $l$. Then $\delta$ and $\delta_3$ are adjacent 2-simplexes,  a contradiction to our assumption Sparse III of $\mathcal{T}$ in Lemma \ref{lemma:inducedgraph};

  	 		\end{itemize}

  	 	\end{enumerate}

  	 	In total,   $\mathcal{X}^*$ is a square-free complex.

  	 \end{proof}

  	 \subsection{Almost flag-no-square property of the triangulation} \label{subsection:almostfns} After the preparations in Subsection~\ref{subsection:subdivision}, we have

	  \begin{thm} \label{thm:almostfns}  Let $N$   be a 
	  	closed orientable 3-manifold with a triangulation $\mathcal{T}$,  and let   $\Gamma$ be a  full, bipartite, flag-no-square subgraph of $\mathcal{T}$ as in Lemma \ref{lemma:inducedgraph}. Let $(\mathcal{T}^*,  \Gamma)$ be the relatively special subdivision associated with \((\mathcal{T}, \Gamma)\). Then 
	  
	  	\begin{enumerate}[label=(\arabic*)]

	  		\item 	$\Gamma$ is a full subgraph of $\mathcal{T}^*$;
	  		
	  			\item   $\Sigma_g \hookrightarrow N$ as a full   subcomplex   $t^*$ of $\mathcal{T}^*$;
	  		
	  			\item  \label{emptytriangle} The triangulation \(\mathcal{T}^*\) is not flag, where the only empty simplices are the triangles contained in the copies of \(\mathcal{C}_{1137}\), as shown in Lemmas~\ref{lemma:fnsC1137simplex};

	  				\item  The triangulation \(\mathcal{T}^*\) is not square-free. If \((C_1,C_2,C_3,C_4,C_1)\) is an empty square in \(\mathcal{T}^*\), then
	  					\begin{enumerate}[label=(\arabic{enumi}.\arabic*)]
	  			
	  					\item \label{emptysquare1} it is contained in one of the copies of \(\mathcal{C}_{1137}\) and \(\mathcal{C}_{1163}\), as shown in Lemmas~\ref{lemma:fnsC1137square} and~\ref{lemma:fnsC1163};
	  					
	  					\item \label{emptysquare2} it is contained in two adjacent copies, one of \(\mathcal{C}'_{543}\) and the other of \(\mathcal{C}_{1163}\), along \(c_{30}\), but not totally in a copy of \(\mathcal{C}_{1163}\). By Lemma~\ref{lemma:halfsquare}, up to cyclic reordering, we may assume
	  					\begin{itemize}
	  						\item \(\{C_2, C_1, C_3\}\) is \(\{w_{2,34}, a_{23}, a_{24}\}\), \(\{w_{3,24},a_{23},a_{34}\}\), or \(\{w_{4,23}, a_{24}, a_{34}\}\);

	  							  						\item \(\{C_2, C_1, C_3\}=\{v_{1},G,F\}\subset\mathcal{C}_{30}\) for some pair of vertices \(G\) and \(F\) in the \(2\)-complex \(c_{30}\).
	  					\end{itemize}
  					
  				 Moreover, \(\{C_4, C_1, C_3\}=\{E,G,F\}\), as stated in Item \ref{halfsquare:a} of Lemma \ref{lemma:halfsquare}.

  				 		\item \label{emptysquare3} it is contained in two copies of \(\mathcal{C}_{1137}\) adjacent along an edge. That is, there are two 3-simplices \(\Delta_1\) and \(\Delta_2\) in \(\mathcal{T}\) such that \(\Delta_1 \cap \Delta_2 = e\) is an edge, and the substituted complexes \(\Delta^*_1\) and \(\Delta^*_2\) are both isomorphic to \(\mathcal{C}_{1137}\). Moreover,
  				 		\[
  				 		(C_1,C_2,C_3,C_4,C_1)=(a_k,v_{1,1},a_l,v_{1,2},a_k),
  				 		\]
  				 		where \(a_k, a_l\) are the endpoints of \(e\), and \(v_{1,1}\) and \(v_{1,2}\) are the copies of the vertex \(v_1\) in \(\Delta^*_1\) and \(\Delta^*_2\), respectively, under the isomorphisms with \(\mathcal{C}_{1137}\).

	  				\end{enumerate}

	  		\end{enumerate}

	  \end{thm}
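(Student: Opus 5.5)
The plan is to localize every clique and every empty square of \(\mathcal{T}^*\) relative to the cell structure coming from \(\mathcal{T}\), using the 2-complex \(\mathcal{X}^*\) as the interface. The key tool is Lemma~\ref{lemma:2skeleton}: \(\mathcal{X}^*\) is a full, flag-no-square subcomplex of \(\mathcal{T}^*\). In addition, every edge of \(\mathcal{T}^*\) lies in a single \(\Delta^*\). This holds because each \(\Delta^*\) is a 3-ball glued to its neighbours only along \(\partial\Delta^*\subset\mathcal{X}^*\), so an edge with an endpoint in \(\operatorname{int}\Delta^*\) stays in \(\Delta^*\), and an edge with both endpoints in \(\mathcal{X}^*\) lies in \(\mathcal{X}^*\) by fullness.

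Items (1) and (2) follow first. Vertices of \(\Gamma\) and of \(t\) are vertices of \(\mathcal{X}^*\), and any simplex of \(\mathcal{T}^*\) spanned by such vertices lies in \(\mathcal{X}^*\) by fullness. Inside \(\mathcal{X}^*\), \(\delta^*\cap\Gamma\) is just the (unsubdivided) edges of \(\Gamma\), since edges \([A_i,B_j]\) are not subdivided in \(c_3,c_4,c_{30}\). Moreover, \(t^*\) is the union of the \(\delta^*\) for \(\delta\subset t\), so fullness of \(t\) in \(\mathcal{T}\) (condition (\ref{inducedgraph:1}) of Lemma~\ref{lemma:inducedgraph}) transfers.

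For item \ref{emptytriangle}, take pairwise adjacent vertices \(C_0,\dots,C_k\). If some \(C_i\) is interior to a \(\Delta^*\), then all the \(C_i\) lie in that \(\Delta^*\). In that case Lemmas~\ref{lemma:fnsCprime543}, \ref{lemma:fnsC1137simplex}, \ref{lemma:fnsC1163} and Lemma~\ref{lemma:fnsC543} give either a simplex or one of the listed empty triangles of \(\mathcal{C}_{1137}\). Otherwise all the \(C_i\) lie in \(\mathcal{X}^*\), which is flag, so they span a simplex there.

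For the squares, let \((C_1,C_2,C_3,C_4,C_1)\) be empty. If all \(C_i\in\mathcal{X}^*\), this contradicts Lemma~\ref{lemma:2skeleton}. If the square lies in one \(\Delta^*\), Lemmas~\ref{lemma:fnsCprime543}, \ref{lemma:fnsC543}, \ref{lemma:fnsC1137square} and \ref{lemma:fnsC1163} give case \ref{emptysquare1}. Otherwise some vertex, say \(C_4\), is interior to \(\Delta_1^*\), and then \(C_1,C_3\in\partial\Delta_1^*\) while \(C_1C_4C_3\) is a ``half square'' with \([C_1,C_3]\notin\mathcal{T}^*\). Now \(C_2\) lies outside \(\Delta_1^*\), in \(\mathcal{X}^*\) or in the interior of some \(\Delta_2^*\).

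\textbf{Main case split.} Consider first the case that \(C_1,C_3\) lie in a common 2-face \(\delta\) of \(\Delta_1\). Then Lemma~\ref{lemma:halfsquare} forces \((\Delta_1^*,\delta^*)\) to be one of the exceptional configurations (a), (b), (c).
\begin{enumerate}
\item[(i)] If \(C_2\) is interior to \(\Delta_2^*\) across \(\delta\), the half square \(C_1C_2C_3\) is also exceptional. The gluing rules (\ref{almostfns:match1})--(\ref{almostfns:match7}) show the only compatible pair is \(\mathcal{C}'_{543}\) with \(\mathcal{C}_{1163}\) along \(c_{30}\), which is case \ref{emptysquare2}. Configuration (b) with a \(c_4\) face is ruled out, because across \(c_4\) the neighbour is \(\mathcal{C}'_{543}\), whose exceptional faces are only \(c_{30}\).
\item[(ii)] If \(C_2\in\mathcal{X}^*\), the path \(C_1C_2C_3\) lies in \(\mathcal{X}^*\). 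Flag-no-square of \(\mathcal{X}^*\) and the known edges in \(\delta^*\) should produce a contradiction, unless the path runs through an edge \(e\) of \(\mathcal{T}\) shared by two \(\mathcal{C}_{1137}\)'s. That remaining situation gives the square \((a_k,v_{1,1},a_l,v_{1,2},a_k)\) of \ref{emptysquare3}, coming from exception (b) with \(G,F\) the genuine endpoints \(a_k,a_l\) of \(e\).
\end{enumerate}
If \(C_1,C_3\) do not lie in a common face of \(\Delta_1\), then they lie on two different faces of \(\Delta_1\), and \(C_2\) must be adjacent to both from outside. This forces \(C_2\) to lie on an edge or vertex of \(\mathcal{T}\) common to several tetrahedra. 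I would rule this out using simpliciality of \(\mathcal{T}\) together with Sparsity I--III (conditions (\ref{inducedgraph:4})--(\ref{inducedgraph:6}) of Lemma~\ref{lemma:inducedgraph}), in the same way as in the proof of Lemma~\ref{lemma:2skeleton}.

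The main obstacle is this last bookkeeping: showing that two half-squares in different tetrahedra can meet only in the three listed configurations. I would first use the rigid structure of the exceptions in Lemma~\ref{lemma:halfsquare}, which fixes exactly which vertices \(G,F\) can occur. I would then check, using the matching rules and Sparsity II, that the neighbouring tetrahedron across the relevant face, edge or vertex has the required type in only those three cases.
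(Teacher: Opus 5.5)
Your framework is the paper's: edges of $\mathcal{T}^*$ live in a single $\Delta^*$, $\mathcal{X}^*$ is full and flag-no-square, and non-local squares split into half-squares controlled by Lemma~\ref{lemma:halfsquare}. Items (1)--(3) go through as you say. The gap is in the main case split for squares, and it is substantive.

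\textbf{Case (i).} You take $\Delta_2$ across $\delta$, but you only know $C_1,C_3\in\Delta_1\cap\Delta_2$, and that intersection can be a single edge of $\delta$. This is exactly where Item~\ref{emptysquare3} comes from. The vertex $v_1$ is the apex of the cone $\mathcal{C}_4$, hence an interior vertex of $\mathcal{C}_{1137}$. So in $(a_k,v_{1,1},a_l,v_{1,2},a_k)$ both $v_{1,1}$ and $v_{1,2}$ are interior, and $\Delta_1\cap\Delta_2=e$.

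Your attribution of this square to case (ii), where $C_2\in\mathcal{X}^*$, is therefore wrong. As a result the edge-intersection case is never analysed. The paper finds Item~\ref{emptysquare3} precisely there, as two type~\ref{halfsquare:b} half-squares $\{v_1,a_k,a_l\}$ meeting along an edge.

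Running this case through all exceptional half-squares is not a formality. The two copies of $\mathcal{C}'_{543}$ containing a triangle $\{A_i,B_j,G\}$ have $c_{30}$ faces $\{A_i,G,I\}$ and $\{A_i,G,I'\}$. The tetrahedra beyond these faces are generically two distinct copies of $\mathcal{C}_{1163}$. Since the apex $v_1$ of $\mathcal{C}_{30}$ is joined to every vertex of $c_{30}$, $(v_1^{(1)},A_i,v_1^{(2)},G)$ is an empty square, and it is not on the theorem's list.

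\textbf{Case (ii).} Flag-no-square of $\mathcal{X}^*$ says nothing about a path $C_1C_2C_3$ in $\mathcal{X}^*$ whose fourth vertex is not in $\mathcal{X}^*$. So ``should produce a contradiction'' is not an argument, and in general there is no contradiction. Write $m_{PQ}$ for the vertex subdividing an edge $[P,Q]$ of $\mathcal{T}$, and take a Replacement~5 tetrahedron $\{A_i,G,I,F\}$ with, say, $A_i\mapsto a_2$.
\begin{itemize}
\item The interior vertex $w_{2,34}$ is joined to $a_{23}=m_{A_iG}$ and $a_{24}=m_{A_iI}$; this is the first half-square in~\ref{halfsquare:c}.
\item In the neighbouring $\mathcal{C}'_{543}$ on $\{A_i,B_j,G,I\}$, the genuine vertex $B=B_j$ is joined to the same two vertices through its two $c_3$ faces.
\item The vertices $m_{A_iG}$ and $m_{A_iI}$ are not adjacent, since their only common $2$-simplex of $\mathcal{T}$ is the $c_{30}$ face.
\end{itemize}
So $(w_{2,34},m_{A_iG},B_j,m_{A_iI})$ is an empty square with $C_2=B_j\in\mathcal{X}^*$. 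It violates the ``Moreover'' clause of Item~\ref{emptysquare2}, because $B_j$ is not the interior vertex required by~\ref{halfsquare:a}. All four of its edges also receive $\pi/2$ in Section~\ref{section:conformaldim}.

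The paper does not supply the missing step either. At the corresponding point it asserts that a fourth vertex lying in $\mathcal{X}$ lies in $\partial\Delta$, which this example refutes. What both arguments lack is an analysis of \emph{boundary} half-squares in the neighbouring piece, such as $\{B,m_{AC},m_{AD}\}$ in $\mathcal{C}'_{543}$; Lemma~\ref{lemma:halfsquare} only covers an interior apex.

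Your last case ($C_1,C_3$ not in a common face) is easier than you suggest and needs no sparsity. An interior $C_2$ would put $C_1,C_3$ in the common face $\Delta_1\cap\Delta_2$, a contradiction. If instead $C_2\in\mathcal{X}^*$, then $C_1,C_3$ must be midpoints of opposite edges of $\Delta_1$, joined to a genuine vertex through two $c_3$ faces. That gives two $A$-vertices in $\Delta_1$, which is impossible since $\Gamma$ is full and bipartite.
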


\begin{proof}

In the process from the triangulation  $\mathcal{T}$  to the triangulation $\mathcal{T}^*$, we do not add any edge  connecting two vertices of $\Gamma$; thus,   $\Gamma$ is  a full subgraph of $\mathcal{T}^*$, and also  a full subgraph of $t^*$,  the corresponding subdivision  of the triangulation $t$ of the surface $\Sigma_g$ induced by the triangulation $\mathcal{T}^*$. It is also ready to see that $t^*$  is a full   subcomplex   of $\mathcal{T}^*$.

	We now  consider the empty triangles and squares in the 3-complex  $\mathcal{T}^*$.  Since, by gluing copies of \(\mathcal{C}_{543}\), \(\mathcal{C}'_{543}\), \(\mathcal{C}_{1137}\), and \(\mathcal{C}_{1163}\) together to get \(\mathcal{T}^*\), we do not fill the empty triangles, the empty triangles in each copy of \(\mathcal{C}_{1137}\), as those referred to in Lemmas~\ref{lemma:fnsC1137simplex}, remain empty in \(\mathcal{T}^*\).
	Moreover, we do not connect two non-adjacent vertices in every empty square in each copy of \(\mathcal{C}_{1137}\) and \(\mathcal{C}_{1163}\), as those referred to in Lemmas~\ref{lemma:fnsC1137square} and~\ref{lemma:fnsC1163}; then these empty squares remain empty in \(\mathcal{T}^*\).

  Suppose $\{C_1, C_2,\cdots,  C_k\}$ is a collection of vertices in $\mathcal{T}^*$ that are pairwise connected by edges:
		\begin{itemize}
			
			\item 	If there exists  a 3-simplex  $\Delta \in \mathcal{T}^{(3)}$ such that, for example,  the vertex  $C_1$ lies in  the interior of $\Delta$, then  all the vertices $C_i$ lie in  $\Delta^*$. 
			  Now,  each 
			$\mathcal{C}_{543}$,  $\mathcal{C}'_{543}$,  and  $\mathcal{C}_{1163}$ is flag, and  $\mathcal{C}_{1137}$ is almost flag, except for the empty triangles  in Lemma \ref{lemma:fnsC1137simplex}. Thus,  $k$ is at most 4,  and $\{C_1, C_2,\cdots,  C_k\}$ span a $(k-1)$ simplex in  $\mathcal{T}^*$, except the empty triangles in copies of $\mathcal{C}_{1137}$;

			\item 	Recall that  $\mathcal{X}$ is the 2-skeleton of $\mathcal{T}$, and $\mathcal{X}^*$ is  the subdivision of $\mathcal{X}$ induced by the relatively special subdivision  $\mathcal{T}^*$.
	We now suppose all vertices $C_i$ lie in  $\mathcal{X}^*$. By Lemma 	\ref{lemma:2skeleton},    $\mathcal{X}^*$ is a full subcomplex of  $\mathcal{T}^*$, and it is flag-no-square. So  $k \leq 3$,  $\{C_1, C_2,\cdots,  C_k\}$ spans a $(k-1)$ simplex in $\mathcal{X}^*$, in turn, a $(k-1)$ simplex in $\mathcal{T}^*$.

			\end{itemize}
	In total,  the only empty simplices in $\mathcal{T}^*$ are  those contained in the copies of $\mathcal{C}_{1137}$  as shown  in Lemma  \ref{lemma:fnsC1137simplex}.

	Suppose  $(C_1, C_2, C_3,C_4, C_1)$ is an empty square   in $\mathcal{T}^*$:
		\begin{itemize}
			\item If there is a 3-simplex $\Delta$   in $\mathcal{T}^{(3)}$ such that $C_1, C_2$
			lie in the interior of  $\Delta$, then all vertices $C_i$ lie in  $\Delta^*$, and $(C_1, C_2, C_3,C_4, C_1)$ is an empty square   contained in a copy  of $\mathcal{C}_{1137}$,  as shown  in Lemma    \ref{lemma:fnsC1137square};
		
		\item If there is a 3-simplex $\Delta$   in $\mathcal{T}^{(3)}$ such that $C_1, C_3$
		lie in the interior of  $\Delta$, then all vertices $C_i$ lie in  $\Delta^*$, and $(C_1, C_2, C_3,C_4, C_1)$ is an empty square   contained in a copy  of    $\mathcal{C}_{1163}$,  as shown  in Lemma     \ref{lemma:fnsC1163};
		
			\item  If there exists a 3-simplex \(\Delta\in\mathcal{T}^{(3)}\) such that \(C_1\) lies in the interior of \(\Delta\), while \(C_2\) and \(C_4\) lie on \(\partial\Delta\).
			Further, suppose that the points \(C_2\) and \(C_4\) do not lie on a common 2-face of \(\Delta\).
			Thus we suppose that \(C_2\) lies on a 2-face \(\delta_2\) of \(\Delta\) and \(C_4\) lies on a different 2-face \(\delta_4\) of \(\Delta\), with \(\delta_2 \neq \delta_4\), and neither \(C_2\) nor \(C_4\) belongs to the intersection \(\delta_2 \cap \delta_4\):

				\begin{itemize}
				
				\item Suppose there exists a 3-simplex $\Delta_1 \neq \Delta \in\mathcal{T}^{(3)}$ such that $C_3$ lies in the interior of $\Delta_1$.
				Consider the two edge pairs $\{[C_1,C_4],[C_4,C_3]\}$ and $\{[C_1,C_2],[C_2,C_3]\}$.
				Then $\Delta$ and $\Delta_1$ intersect in two distinct simplices.
				This contradicts the fact that $\mathcal{T}$ is a simplicial triangulation;

						\item  Suppose the vertex $C_3$ also lies in the interior of $\Delta$.
					Thus all vertices $C_i$ are contained in $\Delta$.
					The cycle $(C_1, C_2, C_3, C_4, C_1)$ forms an empty 2-square, and this square is contained in a copy of  $\mathcal{C}_{1163}$, as established in the Lemma \ref{lemma:fnsC1163};
					
					\item Otherwise, \(C_3\) lies in the 2-skeleton \(\mathcal{X}\) of \(\mathcal{T}\). There are still three subcases, depending on whether \(C_2\) lies in the interior of \(\delta_2\) and whether \(C_4\) lies in the interior of \(\delta_4\). But in either subcase, it can be seen that \(C_3\) lies in the boundary of the 3-simplex \(\Delta\).
					Thus all vertices \(C_i\) are contained in \(\Delta\).
					As established in Lemmas \ref{lemma:fnsC543}, \ref{lemma:fnsC1137square}, and \ref{lemma:fnsC1163}, there is no such empty \(2\)-square \((C_1, C_2, C_3, C_4, C_1)\) in \(\Delta^*\).

					\end{itemize}

		\item If there is a 3-simplex \(\Delta\) in \(\mathcal{T}^{(3)}\) such that \(C_1\) lies in the interior of \(\Delta\), and \(C_2\) and \(C_4\) lie in a common 2-face \(\delta\) of \(\Delta\), then there is no edge between \(C_2\) and \(C_4\). By Lemmas \ref{lemma:interiorC543} and \ref{lemma:halfsquare}, the substituted 3-complex $\Delta^*$ is isomorphic to one of \(\mathcal{C}'_{543}\), \(\mathcal{C}_{1137}\), and \(\mathcal{C}_{1163}\):
		
			\begin{itemize}
			
		\item  If the 3-complex \(\Delta^*\) is isomorphic to \(\mathcal{C}'_{543}\), then by Lemma  \ref{lemma:halfsquare}, \(C_2\) and \(C_4\) lie in the subcomplex \(c_{30}\) of \(\mathcal{C}'_{543}\), where \(c_{30}\) is the subdivision of the 2-face \(\delta\) of \(\Delta\).

			\begin{itemize}
			
			\item Suppose there exists a 3-simplex \(\Delta_1 \in \mathcal{T}^{(3)}\), \(\Delta_1 \neq \Delta\), such that \(C_3\) lies in the interior of \(\Delta_1\). Then by Lemma \ref{lemma:halfsquare}, the 3-complex \(\Delta_1^*\) is isomorphic to \(\mathcal{C}_{1163}\). Thus we obtain an empty square contained in two adjacent copies, one of \(\mathcal{C}'_{543}\) and the other of \(\mathcal{C}_{1163}\), glued along \(c_{30}\), but not entirely contained in a copy of \(\mathcal{C}_{1163}\), as stated in the theorem. We note here that there are two subcases in Item \ref{halfsquare:c} of Lemma \ref{lemma:halfsquare}, each of which yields some empty squares;

				\item Suppose \(C_3\) lies in the 2-skeleton \(\mathcal{X}\) of \(\mathcal{T}\). Then \(C_3\) lies in the boundary of \(\Delta\). By Lemma \ref{lemma:fnsCprime543}, no such empty square exists.

				\end{itemize}

		\item  So we now assume no vertex $C_i$ lies in the interior of 3-simplex $\Delta$, such that  the substituted 3-complex $\Delta^*$ is isomorphic to  \(\mathcal{C}'_{543}\).  Suppose the vertex  \(C_1\) lies in the interior of \(\Delta\), such that the substituted 3-complex $\Delta^*$ is isomorphic to  \(\mathcal{C}_{1163}\), then by Lemma  \ref{lemma:halfsquare}:
		
			\begin{itemize}

			\item Without loss of generality, we assume that \(\{C_1,C_2,C_4\}=\{w_{2,34},a_{23},a_{24}\}\). Now consider the vertex $C_3$. If $C_3$ lies in  the interior of 3-simplex $\Delta_1$, the  substituted 3-complex $\Delta^*_1$ is isomorphic to one of  \(\mathcal{C}_{1137}\), and \(\mathcal{C}_{1163}\).
			But in either case, there is no empty squares in the union of  $\Delta^*$  and $\Delta^*_1$. 
			If $C_3$   lies in the 2-skeleton \(\mathcal{X}\) of \(\mathcal{T}\).  Then \(C_3\) lies in the boundary of \(\Delta\). By Lemma  \ref{lemma:fnsC1163}, no such empty square exists.

		\item Or \(\{C_1,C_2,C_4\}=\{v_{1},G,F\}\subset\mathcal{C}_{30}\) for some pair of vertices \(G\) and \(F\) in the \(2\)-complex \(c_{30}\). As in the previous subcase, no empty square of this type exists.

			\end{itemize}

			\item  So we now assume that no vertex \(C_i\) lies in the interior of the 3-simplex \(\Delta\), and that the substituted 3-complex \(\Delta^*\) is isomorphic to either \(\mathcal{C}'_{543}\) or \(\mathcal{C}_{1163}\). Suppose that the vertex \(C_1\) lies in the interior of \(\Delta\), and that the substituted 3-complex \(\Delta^*\) is isomorphic to \(\mathcal{C}_{1137}\). By Item \ref{halfsquare:b} of Lemma \ref{lemma:halfsquare}, without loss of generality, we may assume that \(\{C_1,C_2,C_4\}\) is either \(\{v_1,a_2,a_3\}\) or \(\{v_1,a_2,a_{34}\}\). Now consider the vertex \(C_3\). If \(C_3\) lies in the interior of the 3-simplex \(\Delta_1\), then the substituted 3-complex \(\Delta^*_1\) is isomorphic to \(\mathcal{C}_{1137}\). But then \(\{C_1, C_2, C_4\}\) and \(\{C_3, C_2, C_4\}\) are two ``half squares'' in Lemma \ref{lemma:halfsquare}. If their union is an empty square, then \(\Delta \cap \Delta_1\) is an edge. There are such empty squares, as stated in Item \ref{emptysquare3} of the theorem. If \(C_3\) lies in the 2-skeleton \(\mathcal{X}\) of \(\mathcal{T}\), then \(C_3\) lies in the boundary of \(\Delta\). By Lemma \ref{lemma:fnsC1137square}, no such empty square exists.

				\end{itemize}

			\item Finally, suppose all the vertices $C_i$ lie in $\mathcal{X}$. Then by Lemma~\ref{lemma:2skeleton}, we obtain a contradiction, so this case cannot occur.

			\end{itemize}

		In total,  the only empty squares in $\mathcal{T}^*$ are  those  contained in the copies of $\mathcal{C}_{1137}$ and    $\mathcal{C}_{1163}$,  as shown  in Lemmas    \ref{lemma:fnsC1137square}   and  \ref{lemma:fnsC1163}.

\end{proof}

	 \begin{remark} \label{remark:reprove}
	 	Our proof of Theorem \ref{thm:almostfns} also yields an alternative proof of Lemma 5.2 of \cite{CDSStark:2025}, although applying the proof of Theorem \ref{thm:almostfns} to obtain Lemma 5.2  of \cite{CDSStark:2025} is overkill. Nevertheless, our proof differs slightly from that in \cite{CDSStark:2025}.
	 	   We need the   Sparsity III condition in the triangulation $t$ of the surface $\Sigma_g$, so we may replace each 2-simplex $\delta \in t$ by a copy of $c_3$ or $c_{10}$ to obtain a flag-no-square triangulation of $\Sigma_g$. Without the Sparsity III condition, the subdivision may not be a flag-no-triangulation, see Figure 	\ref{figure:c3}.
	
\end{remark}

\begin{figure}
	\begin{center}
		\begin{tikzpicture}
		\node at (0,0) {\includegraphics[width=11cm,height=4.9cm]{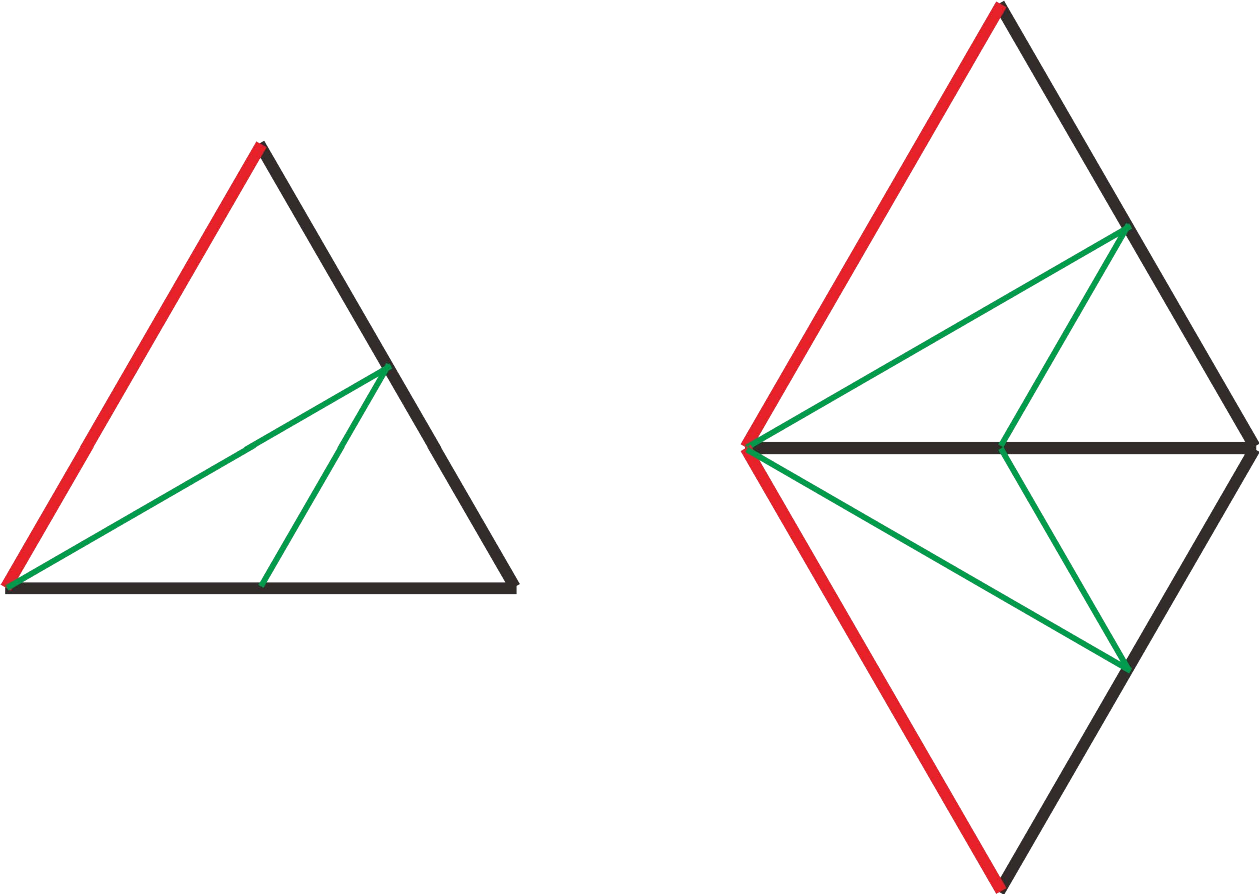}};
		\node at (-2.9,2.2){\small $A_i$};
		\node at (-5.6,-1.3){\small $B_k$};
		\node at (-1.1,-1.3){\small $D$};

		\node at (2.6,2.3){\small $A_i$};
		
		\node at (0.5,0.4){\small $B_k$};
		
		\node at (2.6,-2.3){\small $A_j$};

		\node at (5.7,-0.11){\small $D$};
		\node at (4.7,1.31){\small $C_1$};
		\node at (4.7,-1.31){\small $C_2$};
		
		\end{tikzpicture}
	\end{center}
	\caption{The left subfigure is the triangulation  $c_3$ of a 2-disk, by adding two green arcs to get a  subdivision of a 2-simplex.  If there is a poison edge in $\mathcal{T}^{(1)}$ as described in Lemma \ref{lemma:inducedgraph}, then the 2-skeleton of the special subdivision $\mathcal{T}^*$ is the not flag-no-square, as the right subfigure shows: triangles with vertex sets $\{B_k, A_i, D\}$ and $\{B_k, A_j, D\}$ are adjacent in $\mathcal{T}$.   The edges $[B_k,A_i]$ and $[B_k,A_j]$ lie in the graph $\Gamma$, then $[B_k,D]$ is a poison edge in $\mathcal{T}$. We have   $[B_k, C_1, D, C_2, B_k]$ is an empty square in the special subdivision (by adding green arcs).}
	\label{figure:c3}
\end{figure}

	\section{Assigning angles to edges of $3$-complexes}   \label{section:conformaldim}

In this section, by assigning angles on the edges of \(\mathcal{T}^*\) in Theorem \ref{thm:almostfns}, we prove Theorem \ref{thm:conformaldim}. We note that the empty squares in Theorem \ref{thm:almostfns} impose no real restriction, since we only require that at least one of the angles be smaller than \(\pi/2\). The empty triangles, however, are very restrictive: the sum of the three
angles on an empty triangle must be smaller than \(\pi\), forcing the
angles to be relatively small. On the other hand, a \(2\)-simplex may have
two edges lying in two different empty triangles, and this \(2\)-simplex
is required to yield a spherical Coxeter group, forcing the angles to be
sufficiently large. These two requirements are somewhat contradictory,
and hence the possible angle assignments are very limited on some edges.
This occurs on some \(2\)-simplices in \(\mathcal{C}_{1137}\), see
Figure~\ref{figure:angle}.


	\textbf{Proof of  Theorem \ref{thm:conformaldim}.}  
 For a closed orientable 3-manifold $N$, and a   bipartite,  flag-no-square graph $\Gamma$, we have embeddings  $\Gamma \hookrightarrow \Sigma_g \hookrightarrow N $ with a triangulation $\mathcal{T}$ of $N$ as   in  Lemma \ref{lemma:inducedgraph}. Let $\mathcal{T}^*$ be the relatively special subdivision associated with $(\mathcal{T}, \Gamma)$ as in  Theorem \ref{thm:almostfns}. We now  assign angles $$\mathcal{A}=\{A_{e}\}_{e \in (\mathcal{T}^*)^{(1)}}$$ on the edges of $\mathcal{T}^*$ to get a hyperbolic  Coxeter group. We assign the angle $\pi/2$ for each edge $e$ of  $\mathcal{T}^*$ except:



 \textbf{Angle assignment for \(\mathcal{C}_{1163}\):}
 For a \(3\)-simplex \(\Delta \in \mathcal{T}^{(3)}\), if its substituted \(3\)-complex \(\Delta^*\) in \(\mathcal{T}^*\) is isomorphic to
 \[
 \mathcal{C}_{1163} = \mathcal{C}_{30} \cup_{c_6} \mathcal{C}_{590} \cup_{c_{10}} \overline{\mathcal{C}}_{543},
 \]
 we assign the angle \(\pi/p\) for any \(p \in \mathbb{Z}_{\geq 3}\) to the edges \([v_1, a_{23}]\), \([v_1, a_{24}]\), and \([v_1, a_{34}]\) in \(c_6=C_{30}\cap \mathcal{C}_{590} \subset \mathcal{C}_{1163}\).   This condition is intended to ensure  that the empty squares in copies of
 \(\mathcal{C}_{1163}\) in Item~\ref{emptysquare1} in
 Theorem~\ref{thm:almostfns} are hyperbolic quadrilateral groups.


\textbf{Angle assignment for \(\mathcal{C}'_{543}\):} For a \(3\)-simplex \(\Delta \in \mathcal{T}^{(3)}\), if its substituted \(3\)-complex \(\Delta^*\) in \(\mathcal{T}^*\) is isomorphic to \(\mathcal{C}'_{543}\):

	
	\begin{enumerate}
	\item \label{assign:543prime1}
	
	 We assign the angle \(\pi/3\) to certain edges of the form
$$F_{i_3} \textbf{---} F_{i_4} \quad \text{or} \quad F_{i_4} \textbf{---} F_{i_5},$$
as specified in \eqref{eq:five-circles1}, \eqref{eq:five-circles2}, and
\eqref{eq:five-circles3} in the proof of Item~\ref{halfsquare:a} of
Lemma~\ref{lemma:halfsquare}.
This condition is intended to ensure that the empty squares in Item \ref{emptysquare2} in Theorem \ref{thm:almostfns} are hyperbolic quadrilateral groups;

\item \label{assign:543prime2} Note that there is an adjacent \(3\)-simplex
\(\Delta_1 \in \mathcal{T}^{(3)}\) of \(\Delta\) such that
\(\Delta \cap \Delta_1 = \delta\) is a \(2\)-simplex. The substituted
\(3\)-complex \(\Delta^*_1\) in \(\mathcal{T}^*\) is isomorphic to
\(\mathcal{C}_{1137}\), and the induced subdivision of \(\delta\) is
isomorphic to \(c_4\). We assign \(\pi/4\) to the interior edge in
\(c_4\) opposite \(B\), and \(\pi/3\) and \(\pi/2\) to the remaining
interior edges in \(c_4\). Note that there is an ambiguity here as to which edge is assigned
\(\pi/3\) and which is assigned \(\pi/2\). However, since the angles assigned here are used only to match those in
Item~ (\ref{assign:11371}) in ``Angle assignment for \(\mathcal{C}_{1137}\)''
below, they will be uniquely determined later.

	\end{enumerate}


\textbf{Angle assignment for \(\mathcal{C}_{1137}\):} For a 3-simplex  $\Delta  \in \mathcal{T}^{(3)}$, if its substituted 3-complex \(\Delta^*\) in \(\mathcal{T}^*\) is isomorphic to  $$\mathcal{C}_{1137}=\mathcal{C}_{4}\cup_{c_6}\mathcal{C}_{590} \cup_{c_{10}} \overline{\mathcal{C}}_{543}:$$
	
		
			\begin{enumerate}
		\item \label{assign:11371} For the 2-complex  $c_4 \subset \mathcal{C}_{4} \subset \mathcal{C}_{1137}$, we assign the angle \(\pi/3\) to the edge \([a_{23},a_{24}]\), the angle \(\pi/4\) to the edge \([a_{24},a_{34}]\), and the angle \(\pi/2\) to the edge \([a_{34}, a_{23}]\), see Figure~\ref{figure:angle}.
		This condition is intended to ensure that the empty triangles in
		Item~\ref{emptytriangle} in Theorem~\ref{thm:almostfns} are hyperbolic
		triangle groups, but it does not by itself guarantee this.
 We note that the triangle with vertex set \(\{a_{23},a_{24},a_{34}\}\) is a spherical Coxeter group, which is a necessary condition. 
  Moreover, this 3-simplex \(\Delta \in \mathcal{T}^{(3)}\) is adjacent to a
 \(3\)-simplex \(\Delta_1 \in \mathcal{T}^{(3)}\) such that the
 substituted \(3\)-complex \(\Delta^*_1\) in \(\mathcal{T}^*\) is
 isomorphic to \(\mathcal{C}'_{543}\). Using the angle assignment here,
 we assign angles to the edges of \(\Delta^* \cap \Delta^*_1\) as specified in
 Item~(\ref{assign:543prime2}) in ``Angle assignment for \(\mathcal{C}'_{543}\)'', so that the assigned angles on
 \(\Delta^* \cap \Delta^*_1\) are coherent;

		  	\item	\label{assign:11375} For the $2$-complex \(c_{6}=\mathcal{C}_{4} \cap \mathcal{C}_{590}\), we assign an angle of \(\pi/3\) to the edges \([v_{1},a_{2}]\), \([v_{1},a_{3}]\), and 
	\([v_{1},a_{4}]\).
		This choice of angles is intended to ensure that certain empty squares in Item~\ref{emptysquare3} of Theorem~\ref{thm:almostfns} correspond to hyperbolic quadrilateral groups;

			\item \label{assign:11372} For the 2-complex  \(c_{10}=\mathcal{C}_{590} \cap \overline{\mathcal{C}}_{543}\), we assign to the edges \([a_{23},w_{2,34}]\), \([w_{2,34},a_{24}]\), \([a_{24},w_{4,23}]\), \([w_{4,23},a_{34}]\), \([a_{34},w_{3,24}]\), and \([w_{3,24},a_{23}]\) the angles \(\pi/3\), \(\pi/4\), \(\pi/3\), \(\pi/3\), \(\pi/5\), and \(\pi/4\), respectively; see Figure~\ref{figure:angle}.
		   This condition is intended to ensure  that the empty triangles in Item \ref{emptytriangle} in Theorem \ref{thm:almostfns}  are hyperbolic triangle groups;

		  	\item   \label{assign:11376} We assign the angle \(\pi/p\)  for any $p \in \mathbb{Z}_{\geq 3}$  to the the edge  $[\overline{m}_{1}, \overline{n}_{1}]$  in $\overline{\mathcal{C}}_{543} \subset \mathcal{C}_{1137}$  in Item~\ref{fnsC1137square23} in Lemma \ref{lemma:fnsC1137square}. 	  This condition is intended to ensure  that  some empty squares  in Item \ref{emptysquare1} in Theorem \ref{thm:almostfns} are hyperbolic quadrilateral groups;

		\item \label{assign:11373}  We assign the angle \(\pi/p\)  for any $p \in \mathbb{Z}_{\geq 3}$  to the edge  $[m_{1}, n_{1}]$  in $\mathcal{C}_{590} \subset \mathcal{C}_{1137}$ in Item~\ref{fnsC1137square33} in Lemma \ref{lemma:fnsC1137square}.    This condition is intended to ensure  that some empty squares in Item \ref{emptysquare1} in Theorem \ref{thm:almostfns} are hyperbolic quadrilateral groups;

		\item  \label{assign:11374} We assign the angle \(\pi/p\)  for any $p \in \mathbb{Z}_{\geq 3}$  to  the edge  $[v_2, v_4]$  in $\mathcal{C}_{4} \cup \mathcal{C}_{590} \subset \mathcal{C}_{1137}$  in Item~\ref{fnsC1137square36} in Lemma \ref{lemma:fnsC1137square}.
		 This condition is intended to ensure   that some  empty squares in Item \ref{emptysquare1} in Theorem \ref{thm:almostfns} are hyperbolic quadrilateral groups.

	\end{enumerate}


We note that all angles other than \(\pi/2\) occur on the interior
edges of each \(3\)-complex \(\Delta^*\). The only exceptions are those
occurring in the interior of the \(2\)-simplex
\(\delta = \Delta_1 \cap \Delta_2\). For these, the substituted
\(3\)-complexes \(\Delta^*_1\) and \(\Delta^*_2\) are isomorphic to
\(\mathcal{C}'_{543}\) and \(\mathcal{C}_{1137}\), respectively. By
Item~(\ref{assign:543prime2}) in  ``Angle assignment for \(\mathcal{C}'_{543}\)'' and Item~(\ref{assign:11371}) in  ``Angle assignment for \(\mathcal{C}_{1137}\)'', we assign angles to the edges of
\(\mathcal{T}^*\) coherently.  In particular, since all the edges in the graph \(\Gamma\) lie in copies of \(\mathcal{C}'_{543}\), they are all assigned an angle of \(\pi/2\).

\begin{figure}
	\begin{center}
		\begin{tikzpicture}
		\node at (0,0) {\includegraphics[width=9.5cm,height=8cm]{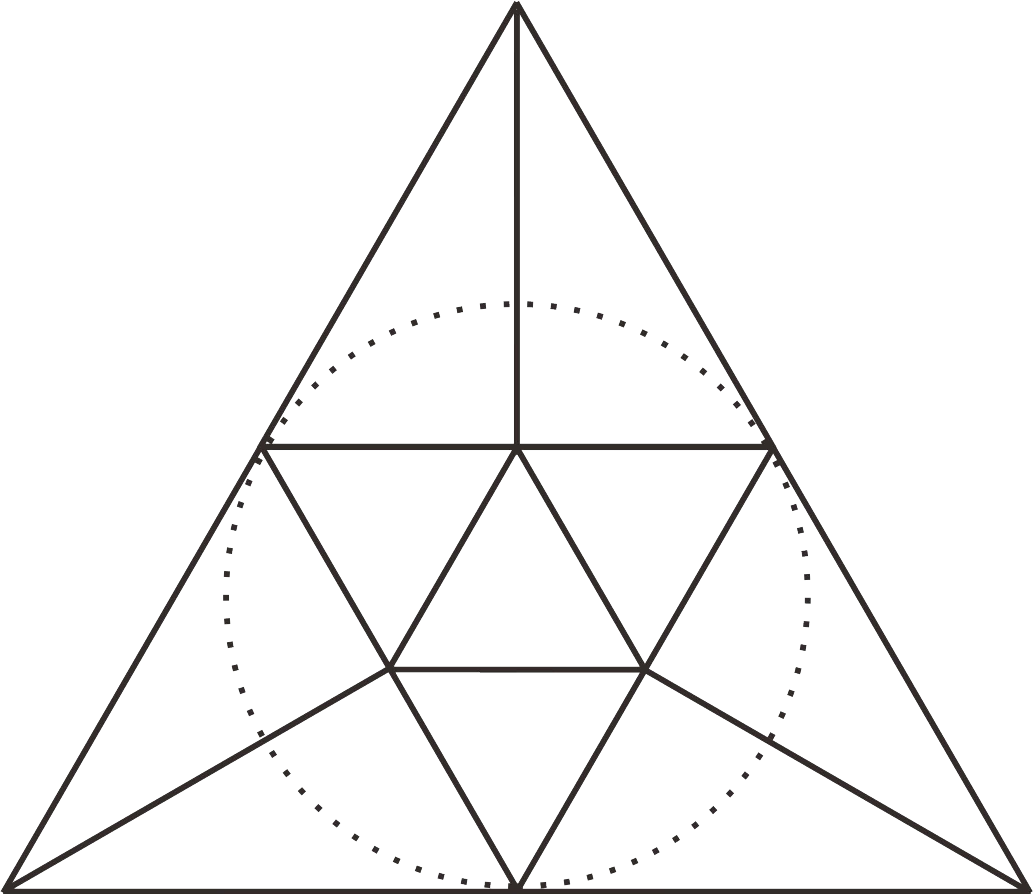}};
		
		\node at (0.5,3.8){\large $a_{2}$};
		
		\node at (-5.1,-3.8){\large $a_{4}$};
		
		\node at (5.1,-3.8){\large $a_{3}$};
		
		\node at (3.1,-0.1){\large $a_{23}$};
		
		\node at (-3.1,-0.1){\large $a_{24}$};
		
		\node at (0.1,-4.3){\large $a_{34}$};

		\node at (0.05,-0.75){\tiny $w_{2,34}$};
		
		\node at (-0.55,-1.7){\tiny $w_{4,23}$};
		
		\node at (0.55,-1.7){\tiny $w_{3,24}$};

		\node at (1.99,-3.4){\large $2$};

		\node at (-0.7,1.5){\large $3$};
		
		\node at (-2.9,-1.8){\large $4$};

		\node at (1.2,0.3){\large $3$};
		
		\node at (-1.2,0.3){\large $4$};
		
		\node at (-1.8,-1.4){\large $3$};	
		
		\node at (-0.9,-2.9){\large $3$};	
		
		\node at (0.98,-2.9){\large $5$};
		
		\node at (2.1,-0.9){\large $4$};
		
		\end{tikzpicture}
	\end{center}
	\caption{The angles near $c_{10}=\mathcal{C}_{590} \cap  \overline{\mathcal{C}}_{543} \subset \mathcal{C}_{1137}$. Here for example, the label $3$ for the edge $[a_{23},a_{24}]$ means the angle is $\pi/3$. The angle on each non-labeled edge is $\pi/2$. }
	\label{figure:angle}
\end{figure}

Then we have,



\textbf{Finite groups for \(2\)- and \(3\)-simplices:}

\begin{enumerate}

	\item For a triangle \(\delta\) with vertex set \(\{C_i,C_j,C_k\}\) in \(\mathcal{T}^*\), suppose \(\delta\) is a 2-face of \(\Delta \in \mathcal{T}^{*}\). 
	From the four isomorphic types of the subdivision \(\Delta^*\), it is easy to see that there is at most one edge of \(\delta\) with angle smaller than \(\pi/2\), so there is a spherical Coxeter group \(W_{\delta}\) associated with \(\delta\). 
	The only exceptions are certain triangles in \(c_{10}=\mathcal{C}_{590} \cap \overline{\mathcal{C}}_{543}\); see Figure~\ref{figure:angle}. 
	However, the angles we assigned to these triangles force them to be spherical Coxeter triangles.  For example, consider the triangles with vertex sets
	\(\{v_1,a_{23}, w_{3,24}\}\), \(\{v_1,a_{34}, w_{3,24}\}\), and \(\{a_{34},w_{4,23}, w_{3,24}\}\).
	As we noted at the beginning of this section, the assigned angles here are the trickiest ones;

		\item Let \(\Delta_1\) be a \(3\)-simplex in \(\mathcal{T}\), and let \(\Delta\) be a \(3\)-simplex with vertex set \(\{C_i,C_j,C_k,C_l\}\) in the subdivision \(\Delta^*_1 \subset \mathcal{T}^*\). 
		Since the subdivision \(\Delta_1^*\) has only four isomorphic types, at most one edge of \(\Delta\) has angle smaller than \(\pi/2\); hence there is a spherical Coxeter group \(W_{\Delta}\) associated with \(\Delta\).
		The only exceptions are:

		\begin{itemize}

			\item
			
			The  \(3\)-complex \(\Delta^*_1\)  is isomorphic to  \(\mathcal{C}'_{543}\).
			Assume that \(E\) is a vertex of \(\Delta\) at which two edges of
			\(\Delta\) whose angles are different from \(\pi/2\) meet. The vertex
			\(E\) either comes from Item~(\ref{assign:543prime1}) or from
			Item~(\ref{assign:543prime2}) in ``Angle assignment for
			\(\mathcal{C}'_{543}\)''.
			
			In the first subcase, for all of the fifteen half-squares
			\[
			F_{i_3} \text{ --- } F_{i_4} \text{ --- } F_{i_5}
			\]
			in (\ref{eq:five-circles1}), (\ref{eq:five-circles2}) and (\ref{eq:five-circles3}) appearing in the proof of Item~\ref{halfsquare:a} in Lemma~\ref{lemma:halfsquare},
			there are exactly six vertices, namely
			\[
			\{F_{14},F_{16},F_{18},F_{20},F_{26},F_{28}\},
			\]
			such that two edges at each of these vertices are assigned the angle
			\(\pi/3\), while every other vertex has at most one incident edge
			assigned the angle \(\pi/3\). Hence \(E\) is one of the vertices above.
			A \(3\)-simplex \(\Delta\) with two edges having angle \(\pi/3\) and
			all other edges having angle \(\pi/2\) yields a spherical Coxeter
			group. Thus \(W_{\Delta}\) is the spherical Coxeter group associated
			with \(\Delta\).
			In the second subcase, if \(E\) is adjacent to an edge in the copy of $c_4$ whose angle is
				\(\pi/4\), then \(E\) cannot be any of the vertices
				\(\{F_{14},F_{16},F_{18},F_{20},F_{26},F_{28}\}\) above.
					Any \(3\)-simplex \(\Delta\) in \(\mathcal{C}'_{543}\) having \(E\) as a
				vertex has at most two edges whose angles are different from
				\(\pi/2\), and its smallest angle is \(\pi/4\). Hence, with the
				assigned angles, \(\Delta\) is a spherical Coxeter polytope;
				We now assume that \(E\) is adjacent to an edge in the copy of $c_4$ whose angle is
		\(\pi/3\). 	Any \(3\)-simplex \(\Delta\) in \(\mathcal{C}'_{543}\) having \(E\) as a
		vertex has at most three edges whose angles are different from
		\(\pi/2\), and its smallest angle is \(\pi/3\). Hence, with the
		assigned angles, \(\Delta\) is a spherical Coxeter polytope;

	\item  The \(3\)-complex \(\Delta^*_1\) is isomorphic to \(\mathcal{C}_{1137}\). We need to consider the \(3\)-simplex \(\Delta\) appearing in the angle assignments (\ref{assign:11371}), (\ref{assign:11375}), and (\ref{assign:11372}) of  ``Angle assignment for \(\mathcal{C}_{1137}\)''.
All four \(3\)-simplexes in \(C_4 \subset \mathcal{C}_{1137}\), with the assigned angles, are spherical Coxeter polytopes. Moreover, f
or each \(3\)-simplex containing a \(2\)-simplex in \(c_{10}=\mathcal{C}_{590} \cap \mathcal{C}'_{543} \subset \mathcal{C}_{1137}\), see Figure~\ref{figure:angle}. By the well-known classification of spherical Coxeter polytopes, see, for example, Figure~1 of~\cite{Ma-Zheng:2025}, each such labeled \(3\)-simplex is a spherical Coxeter polytope.

	
\end{itemize}

	\end{enumerate}


 \textbf{Infinite groups for empty triangles and squares:}
	\begin{enumerate}
\item  For the empty triangles in Item~\ref{emptytriangle} in Theorem~\ref{thm:almostfns} in copies of \(\mathcal{C}_{1137}\), with vertex sets \(\{a_{23},a_{24},w_{2,34}\}\), \(\{a_{23},a_{34},w_{3,24}\}\), and \(\{a_{24},a_{34},w_{4,23}\}\), by Items~(\ref{assign:11371}) and~(\ref{assign:11372}) in ``Angle assignment for \(\mathcal{C}_{1137}\)'' we have an infinite hyperbolic group \(G\) for each of them; moreover, \(G\) is virtually the fundamental group of a closed surface of genus \(2\);

	\item For the empty square in Item~\ref{emptysquare1} in Theorem~\ref{thm:almostfns}, if the empty square lies in a copy of \(\mathcal{C}_{1137}\), it is in one of the twelve forms in Lemma~\ref{lemma:fnsC1137square}; by Items~(\ref{assign:11371}), (\ref{assign:11373}), (\ref{assign:11374}), and~(\ref{assign:11376})  in ``Angle assignment for \(\mathcal{C}_{1137}\)'', at least one of the edges is assigned the angle \(\pi/q\) for some \(q \geq 3\); we obtain an infinite hyperbolic group \(G\) for each of them; moreover, each such \(G\) is virtually the fundamental group of a closed surface of genus \(2\).
	If the empty square lies in a copy of \(\mathcal{C}_{1163}\), it is in one of the three forms in Lemma~\ref{lemma:squareC590}; by  ``Angle assignment for \(\mathcal{C}_{1163}\)'', two of the edges are assigned the angle \(\pi/p\) for some \(p \geq 3\); we obtain an infinite hyperbolic group \(G\) for each of them;

		\item  For the empty squares \((C_1,C_2,C_3,C_4,C_1)\) in Item~\ref{emptysquare2} in Theorem~\ref{thm:almostfns}, which lies in  in two adjacent copies of \(\mathcal{C}'_{543}\) and \(\mathcal{C}_{1163}\). Recall that,   up to reordering, we may assume \((C_4,C_1,C_2)\) lies in \(\mathcal{C}'_{543}\), and it is in one of the fifteen forms in Item~\ref{halfsquare:a} in Lemma~\ref{lemma:halfsquare}.
		By Item~(\ref{assign:543prime1}) in ``Angle assignment for \(\mathcal{C}'_{543}\)'', exactly one edge of the empty square is assigned the angle \(\pi/3\); we obtain an infinite hyperbolic group \(G\) for each of them;

			\item  For the empty squares \((C_1,C_2,C_3,C_4,C_1)\) in Item~\ref{emptysquare3} in Theorem~\ref{thm:almostfns}, as in Lemma~\ref{lemma:halfsquare}, we may assume there are two \(3\)-simplices \(\Delta_1\) and \(\Delta_2\) in \(\mathcal{T}\) such that \(\Delta_1 \cap \Delta_2 = e\) is an edge, and the substituted complexes \(\Delta^*_1\) and \(\Delta^*_2\) are both isomorphic to \(\mathcal{C}_{1137}\), and
			$$
			(C_1,C_2,C_3,C_4,C_1)=(a_k,v_{1,1},a_l,v_{1,2},a_k),$$
			where \(a_k, a_l\) are the endpoints of \(e\), and \(v_{1,1}\) and \(v_{1,2}\) are the copies of the vertex \(v_1\) in \(\Delta^*_1\) and \(\Delta^*_2\), respectively, under the isomorphisms with \(\mathcal{C}_{1137}\).
			By Item~(\ref{assign:11375}) in ``Angle assignment for \(\mathcal{C}_{1137}\)'', at least  two edges of the empty square are assigned the angle \(\pi/3\); we obtain an infinite hyperbolic group \(G\) for each of them.


\end{enumerate}


Thus, for a \(3\)-complex \(\mathcal{T}^*\) in Theorem~\ref{thm:almostfns}, the assigned angles \(\mathcal{A}\) above on the edges of \(\mathcal{T}^*\) determine a Coxeter system \((\mathcal{T}^*,\mathcal{A})\) with nerve exactly $\mathcal{T}^*$.

Fix a closed orientable \(3\)-manifold \(N\). We have a sequence of graphs \(\Gamma_k\) in Lemma~\ref{thm:highconformaldim}. Taking \(\Gamma_k\) as the nerve, we obtain a sequence of hyperbolic, right-angled Coxeter groups \(H_k\) such that the conformal dimension of the boundary of \(H_k\) diverges to infinity.
For the sequence \(\{\Gamma_k\}^{\infty}_{k=1}\) and \(N\), by Lemma~\ref{lemma:inducedgraph} and Theorem~\ref{thm:almostfns}, there is a Coxeter system \((\mathcal{T}^*_k,\mathcal{A}_k)\) such that the associated Coxeter group \(W_k\) is hyperbolic with nerve \(\mathcal{T}^*_k\) by~\cite{Moussong:1988}. Moreover, the right-angled Coxeter group \(H_n\) with nerve \(\Gamma_k\) is a quasi-convex subgroup whose boundary is a Menger curve. Therefore, the conformal dimension of the boundary of \(W_k\) is bounded below by that of the boundary of \(H_k\). Theorem~\ref{thm:conformaldim} follows. \qed


\section{Proof of Theorem  \ref{thm:infiniteqi}} \label{section:infiniteqi}

In this section, we prove Theorem~\ref{thm:infiniteqi}. To this end, we first recall some material from \cite{Davis:1983}.

\begin{defn} A \emph{\(4\)-manifold with corners} is a Hausdorff space equipped with an atlas of local charts onto open subsets of
	\[
	\mathbb{R}_{+}^{4}=\{x=(x_1, x_2, x_3, x_4) \in \mathbb{R}^{4} \mid x_{i} \geq 0\},
	\]
	such that the overlaps are homeomorphisms preserving codimension. Here, for \(x=(x_1, x_2, x_3, x_4) \in \mathbb{R}_{+}^{4}\), the codimension \(c(x)\) is the number of coordinates \(x_i\) that are zero \cite{Davis:1983}.

\end{defn}   

For a \(4\)-manifold \(M\) with corners, an open pre-face of \(M\) of codimension \(i\) is a connected component of \(c^{-1}(i)\), and the closure of a pre-face is called a face of \(M\). In our case, we assume that each face of codimension \(2\) is adjacent to exactly two codimension \(1\) faces.



\textbf{Proof of Theorem \ref{thm:infiniteqi}.}
Let \(N\) be an integral homology \(3\)-sphere. By a theorem of Freedman~\cite{Freedman:1982}, \(N\) bounds a contractible compact \(4\)-manifold \(M\).
Let \(\mathcal{T}^*\) be the almost flag-no-square triangulation of \(N\) in Theorem~\ref{thm:almostfns}. The dual of \(\mathcal{T}^*\) in \(N\) gives a cell decomposition \(\mathcal{D}\) of \(N\), each \(3\)-cell of $\mathcal{D}$ is a \(3\)-dimensional polytope. Thus, together with the cell structure on \(\partial M=N\), the decomposition \(\mathcal{D}\) endows \((M,\partial M)\) with a 4-dimensional corner manifold structure.

Since \(\mathcal{T}^*\) is a simplicial triangulation, we may view \((M, \mathcal{D})\) as a ``simple polytope''. Together with the angle structure \(\mathcal{A}\) in the proof of Theorem~\ref{thm:conformaldim}, we may then view \((M, \mathcal{D}, \mathcal{A})\) as a ``Coxeter polytope''. For example, if \(N\) is homeomorphic to the 3-sphere, then \(M\) is homeomorphic to the 4-ball, and \((M, \mathcal{D}, \mathcal{A})\) is a genuine Coxeter polytope.

For the infinite hyperbolic Coxeter group associated to the Coxeter system \((\mathcal{T}^*, \mathcal{A})\), by Theorem 15.3 of \cite{Davis:1983}, infinitely many copies of \(M\) are glued together to an open contractible \(4\)-manifold \(\widetilde{M}\) according to the Coxeter group \(W_{(\mathcal{T}^*, \mathcal{A})}\). The \(4\)-manifold \(\widetilde{M}\) is not homeomorphic to \(\mathbb{R}^4\) if and only if \(N\) is not homeomorphic to the 3-sphere \(\mathbb{S}^3\). We note that the proof in \cite{Davis:1983} does not use the right-angled condition, even though that paper is concerned with the universal cover of the \(4\)-orbifold associated with a right-angled ``Coxeter polytope''. In fact, the same proof applies to any ``Coxeter polytope'', and hence to the groups considered here.

The Coxeter group \(W_{(\mathcal{T}^*, \mathcal{A})}\) acts on \(\widetilde{M}\) properly discontinuously and cocompactly. Let \(G\) be a finite-index torsion-free subgroup of \(W_{(\mathcal{T}^*, \mathcal{A})}\). Then \(G\) acts on \(\widetilde{M}\) freely and properly discontinuously, and we obtain the quotient space \(M^{*}\).
 The space \(M^{*}\) is aspherical with Gromov hyperbolic fundamental group, so it  is a topological negatively curved \(4\)-manifold. Thus in this case, the Coxeter group \(W_{(\mathcal{T}^*, \mathcal{A})}\) is virtually a \(PD_{4}\) hyperbolic group. See also \cite[page 431]{Fischer:2003}. Moreover, by \cite{PrzytyckiS:2009, Swiatkowski:2020}, the boundary at infinity of \(W_{(\mathcal{T}^*, \mathcal{A})}\) is the tree of manifolds \(\chi(N \sharp \overline{N})\), which depends on \(N\); and the tree of manifolds is not a \(3\)-manifold if and only if \(N\) is not homeomorphic to the 3-sphere \(\mathbb{S}^3\).

Let \(N\) be an integral homology \(3\)-sphere. Given a sequence of graphs \(\Gamma_k\) as in Lemma \ref{thm:highconformaldim}, we have the triangulations \(\mathcal{T}_k\) as in Lemma \ref{lemma:inducedgraph}, and then the triangulations \(\mathcal{T}_k^*\) of \(N\) as in Theorem~\ref{thm:almostfns}. Consequently, we obtain hyperbolic Coxeter groups \(W_{(\mathcal{T}^*_k, \mathcal{A}_k)}\) and topological negatively curved \(4\)-manifolds \(M^{*}_k\) as above.

With nerve \(\Gamma_k\), we have a sequence of hyperbolic right-angled Coxeter groups \(H_k\) such that the conformal dimension of the boundary of \(H_k\) diverges to infinity. From the proof of Theorem \ref{thm:almostfns}, we also have a simplicial triangulation \(t^*_k\) of \(\Sigma_g\) that is a full subcomplex of \(\mathcal{T}_k^*\). Let \(K_k\) be the hyperbolic Coxeter group whose nerve is the triangulation \(t^*_k\) of \(\Sigma_g\), with angles given by the restriction of \(\mathcal{A}_k\). For each \(k\), since \(\pi_{1}(M^*_k)\) is a finite-index subgroup of \(W_{(\mathcal{T}^*_k, \mathcal{A}_k)}\), the groups \(H_k \cap \pi_{1}(M^*_k)\) and \(K_k \cap \pi_{1}(M^*_k)\) are commensurable with the groups \(H_k\) and \(K_k\), respectively. Thus the groups \(H_k \cap \pi_{1}(M^*_k)\) and \(K_k \cap \pi_{1}(M^*_k)\) are quasiconvex hyperbolic subgroups of \(\pi_{1}(M^*_k)\), where the boundary of \(K_k \cap \pi_{1}(M^*_k)\) is the Pontryagin sphere and the boundary of \(H_k \cap \pi_{1}(M^*_k)\) is the Menger curve. Theorem \ref{thm:infiniteqi} follows from Theorem \ref{thm:conformaldim}.

  \qed

\bibliographystyle{amsplain}

\end{document}